\documentclass[a4paper,10pt]{article}
\usepackage{graphicx,xcolor,textpos,a4wide}
\usepackage{helvet}

\usepackage{a4wide}

\usepackage{amsfonts}
\usepackage{amssymb}
\usepackage{amsmath}
\usepackage{amsthm}
\usepackage{graphicx}
\usepackage{subcaption}
\usepackage{multicol}
\usepackage{siunitx}
\usepackage{url}
\usepackage[utf8]{inputenc}
\usepackage[nottoc]{tocbibind}
\usepackage{tikz}
\usepackage[symbol]{footmisc}
\usepackage{xcolor}
\usepackage{mleftright}
\usepackage{verbatim}
\usepackage[ruled,vlined]{algorithm2e}
\usepackage[noend]{algpseudocode}
\usepackage{lipsum}
\usepackage{mathrsfs}
\usepackage{tikz-cd}
\usepackage{hyperref}
\hypersetup{
	colorlinks=true,
	linkcolor=blue,
	filecolor=magenta,      
	urlcolor=cyan,
}
\usepackage{tabularx}
\usepackage{multirow}
\usepackage{longtable}
\usepackage{emptypage}
\usepackage{quiver}
\usepackage{dsfont}
\usepackage[capitalise,noabbrev]{cleveref}
\usepackage{cancel}
\usepackage{todonotes}

\newtheorem{theorem}{Theorem}[section]
\newtheorem*{theorem*}{Theorem}
\newtheorem{lemma}[theorem]{Lemma}
\newtheorem{definition}[theorem]{Definition}
\newtheorem*{definition*}{Definition}
\newtheorem{proposition}[theorem]{Proposition}
\newtheorem*{proposition*}{Proposition}
\newtheorem{example}[theorem]{Example}
\newtheorem{corollary}[theorem]{Corollary}

\newtheorem{question}{Question}
\newtheorem{notation}[theorem]{Notation}
\newtheorem{result}{Theorem}

\newtheorem*{result*}{Theorem}
\crefalias{result}{theorem}

\newcommand{\QQ}{\mathbb{Q}}
\newcommand{\RR}{\mathbb{R}}

\newcommand{\ZZ}{\mathbb{Z}}

\newcommand{\NN}{\mathbb{N}}

\newcommand{\id}{\mathds{1}}

\newcommand{\inv}{^{-1}}
\newcommand{\comp}{\circ}

\newcommand{\abs}[1]{\left|#1\right|}
\newcommand{\norm}[1]{\left|\left|#1\right|\right|}
\newcommand{\gnorm}[1]{\left|\left|#1\right|\right|_{\mathrm{g}}}
\newcommand{\set}[1]{\left\{#1\right\}}

\newcommand{\mf}[1]{\mathfrak{#1}}
\newcommand{\mc}[1]{\mathcal{#1}}

\newcommand{\normal}{\vartriangleleft}
\newcommand{\conj}{\mathrm{Conj}}
\newcommand{\cd}{{\mathrm{D}_{\sim}}}

\newcommand{\range}[1]{\{1,2,\cdots,#1\}}

\newcommand{\ind}[1]{\mathrm{Ind}(#1)}

\newcommand{\const}[1]{C_{#1}}

\author{Jonas Der\'e, Lukas Vandeputte}
\title{Uniform Upper Bounds for Conjugacy Separability Growth in Nilpotent Groups}

\date{\today}

\begin{document}
	\maketitle
	\begin{abstract}
    A group is conjugacy separable if every pair of non-conjugate elements remains non-conjugate in some finite quotient. Virtually polycyclic groups, and hence in particular all finitely generated nilpotent groups, are conjugacy separable. The conjugacy separability growth function measures the complexity of distinguishing non-conjugate elements in finite quotients by giving the smallest order of a finite quotient $Q$ that separates them. Recent work showed that this function admits polynomial upper and lower bounds for nilpotent groups, but these estimates are neither explicit nor optimal. We prove improved polynomial upper bounds whose degree is at most linear in the nilpotency class and at most quadratic in the Hirsch length. For $2$-step nilpotent groups, we obtain sharper estimates and show that these estimates are optimal among bounds depending only on the nilpotency class and Hirsch length. This gives a correction of an error in the existing literature. The main tool is a translation of conjugacy separability growth into Lie rings, generalizing previous work in the case of residual finiteness.
\end{abstract}
	\section{Introduction}
	A group $G$ is residually finite if for every non-trivial element $g\in G$ there exists an epimorphism $\varphi:G\to Q$ onto a finite group such that $\varphi(g)\neq 1$. Mal'cev showed in \cite{Malcev1983} that if a finitely presented group is residually finite, then it has solvable word problem. Similarly, a group $G$ is conjugacy separable if for every pair of non-conjugate elements $g,h\in G$, there exists an epimorphism to a finite group $\varphi : G\rightarrow Q$ such that $\varphi(g)$ and $\varphi(h)$ are non-conjugate in $Q$. Again, if a finitely presented group is conjugacy separable, it has solvable conjugacy problem. 
	
Classically, a lot of effort was put into determining which groups satisfy these separability properties. For example, free groups, surface groups, virtually polycyclic groups, fundamental groups of closed orientable $3$-manifolds and free-by-cyclic groups are known to enjoy both properties, see \cite{blackburn1965conjugacy,formanek1976conjugate,hamilton2013separability,hempel1987residual,remeslennikov1971finite,stebe1972conjugacy,dahmani2025free}. The quantitative study of these properties was initiated by Bou-Rabee in \cite{bou2010quantifying}, where he introduced the residual finiteness function $\mathrm{RF}_G:\NN\to\NN$. Given $n\in\NN$, the value $\mathrm{RF}_G(n)$ is the minimal integer such that for every non-trivial $g\in G$ with $\norm{g}_S\le n$, there exists an epimorphism $\varphi:G\to Q$ onto a finite group with $|Q|\le \mathrm{RF}_G(n)$ and $\varphi(g)\neq 1$. Although $\mathrm{RF}_G$ depends on the generating set, its asymptotic growth does not. Similarly, for $n\in\NN$, let $\conj_G(n)$ denote the smallest integer such that for every $a,b\in G$ that are non-conjugate and of word norm at most $n$, there exists a quotient $\varphi: G\rightarrow Q$ with $\abs{Q}\leq \conj_G(n)$ in which $\varphi(a)$ and $\varphi(b)$ remain non-conjugate. Thus $\conj_G(n)$ measures the size of the finite quotient required to certify non-conjugacy of elements of word length at most $n$. As for residual finiteness growth, the asymptotic growth type of $\conj_G$ is independent of the chosen finite generating set.
	
	In general, the asymptotic behavior of the functions $\mathrm{RF}_G$ and $\conj_G$ is poorly understood. While upper and lower bounds have been established for many groups, these bounds are often far from sharp. The function $\conj_G$ is only known in rare cases such as for the abelian groups \cite{bou2010quantifying}, the lamplighter groups \cite{lawton2017decision}, and more recently the virtually abelian groups \cite{dere2024virtab} as well as a class of groups constructed to exhibit prescribed conjugacy separability growth \cite{vandeputte2026arbitrary}. More background can be found in the survey \cite{dfp2024}. In this article, we study the conjugacy separability growth of torsion-free nilpotent groups.
	
	In the nilpotent case, little is known. In \cite{pengitore2015effective}, Pengitore showed that when $G$ is a generalised Heisenberg group, then $\conj_G(n)\simeq n^{h(G)}$ where $h(G)$ is the Hirsch length of $G$. More generally, it has been shown in \cite{dere2019effective} that $\conj_G$ is bounded below by $n^3$ if $G$ is not virtually abelian and bounded above by some polynomial. The proof in \cite{dere2019effective} relies on induction along a central series, leading to bounds whose degree grows rapidly with the nilpotency class and Hirsch length. These estimates are therefore far from optimal in general. Our main result provides a sharper polynomial upper bound whose degree depends explicitly on natural invariants of the group.
	\begin{result}\label{prop:AlgUpperBound}
		Let $G$ be a torsion-free finitely-generated $k$-step nilpotent group, then
		$$\conj_G(n)\preceq n^m$$ where \[ m=(k-1)\bigl(h(G)-h(Z(G))+1\bigr)h\bigl([G,G]\bigr). \]
	\end{result}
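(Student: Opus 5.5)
Following the abstract, I would translate the problem into Lie rings. By the Mal'cev correspondence, $G$ has finite index in a group $G^{\QQ}\cap\exp(L)$ for some full-rank Lie ring $L\subset\mf{g}$, the rational Mal'cev completion. For such lattices the group operations are given by polynomial BCH formulas with bounded denominators. Since $\conj$ changes only by a constant factor under passing to finite-index subgroups and overgroups (up to the usual argument for finite extensions, or simply by choosing $L$ with $\exp(L)\le G$ of finite index), it suffices to bound the following. Given non-conjugate $g=\exp(x)$ and $h=\exp(y)$ of word length at most $n$, find a finite quotient of size $\preceq n^m$ separating them. Elements of word length $n$ have Mal'cev coordinates bounded by $n^{k}$ (degree $i$ in the $i$-th term of the lower central series).

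The key steps are as follows. First, reduce to the case where $x$ and $y$ have the same image modulo $[L,L]$, and more generally agree up to conjugacy modulo each term $\gamma_i$. Otherwise an abelian quotient of size polynomial in $n$ separates them, via the abelian bound of \cite{bou2010quantifying}, or a quotient of $G/\gamma_i$ found by induction. Second, consider the conjugacy orbit of $x$ inside the coset $x+[L,L]$. For fixed $x$ this orbit is described by the map $L\to[L,L]$, $z\mapsto e^{\mathrm{ad} z}x-x$. Its linear part is $-\mathrm{ad}_x$, with kernel containing $Z(L)$. So the relevant rank is at most $h(G)-h(Z(G))$, and the image lies in a space of dimension $h([G,G])$. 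Third, work modulo $p^{e}L$ (or $NL$ for suitable $N$): $g$ and $h$ are conjugate in $G/G_N$ iff $y-x$ lies in the image of this polynomial map modulo $N$. Choose $N$ to be a product of primes (or a prime power) for which the failure of $y-x$ to lie in the image over $\ZZ$ persists modulo $N$. The size of the quotient is $N^{h(G)}$-ish, but by quotienting only the part of $[L,L]$ that is needed one gets size about $N^{h([G,G])}$, times a factor from the abelianization. Fourth, bound $N$. Using Smith normal forms and determinants of minors of the matrix of $\mathrm{ad}_x$, whose entries are polynomial in coordinates of size $n^{k}$, the elementary divisors are bounded by roughly $n^{(k-1)(h(G)-h(Z(G))+1)}$. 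Iterating along the $k-1$ layers of the central series accounts for the factor $(k-1)$, and raising to $h([G,G])$ gives $m$.

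I expect the main obstacle to be the nonlinearity. The conjugation orbit is only linear layer by layer, so I would proceed by induction on the layers $\gamma_i L/\gamma_{i+1}L$. At each layer, adjust the conjugator so that the images agree modulo $\gamma_{i+1}$, and then solve a linear system over $\ZZ$ whose matrix comes from $\mathrm{ad}_{x}$ restricted appropriately. The difficulty is controlling the accumulated moduli so that the exponent is additive rather than multiplicative across layers. I would first try choosing a single modulus $N$ equal to the product of all the relevant determinants (each of size at most $n^{h(G)-h(Z(G))+1}$ up to constants), and verify that non-solvability over $\ZZ$ implies non-solvability modulo $N$ times a bounded constant.
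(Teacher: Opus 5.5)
Your outline has the right ingredients: the Mal'cev/BCH translation, reduction along the central series, and determinants of the image of $\mathrm{ad}_x$. But the decisive step is left open, and the exponent count does not follow from what you describe.

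\textbf{The missing idea.} No layer-by-layer iteration is needed. Once $a$ and $b$ are conjugate modulo the last term $G_k$ (after passing to a quotient of $G$ if necessary), a conjugator of polynomially bounded length modulo $G_k$ (Lemma~\ref{ACReduction}, via \cite[Lemma 6.2]{dere2019effective}) lets you replace $b$ by $ac$, with $c\in G_k$ central and of polynomially bounded norm. For such $c$ the BCH expansion gives $\{a,g\}=c$ if and only if $[a,g]=c$, and this also holds modulo any strong ideal (Proposition~\ref{prop:ConjTranslation}). So non-conjugacy is exactly the linear condition $c\notin[a,G]$, and it suffices to find $q$ with $c\notin[a,G]+q\mf g$. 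Your plan to take $N$ as a product of determinants and then ``verify'' that non-solvability over $\ZZ$ persists modulo $N$ is precisely the point you have not proved.

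\textbf{Two further unjustified steps.} First, you cannot assume that $\conj$ is unchanged under passing to finite-index subgroups or overgroups. This is not known in general; the paper proves it only under the hypothesis $H=GZ(H)$ of Proposition~\ref{prop:centralextension}. Second, conjugators must lie in $G$, not in $\mf g$. The paper keeps $m\mf g\subset G\subset\mf g$ and treats the case $c\in[a,\mf g]\setminus[a,G]$ separately, using $q=m\det([a,\mf g])$ and Corollary~\ref{prop:GinvIdeal}. It then converts $q\mf g$ into a normal subgroup of comparable index with the same cosets, via strong ideals (Lemmas~\ref{prop:IdealContainsStrong} and~\ref{prop:CosetsEquivalent}).

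\textbf{The exponent bookkeeping.} Your count puts the two factors of $m$ in the wrong places. The modulus satisfies $q\le\max\{m\det([a,\mf g]),\,O(\log n)\}$, where $\det([a,\mf g])\le C\norm{a}_S^{(k-1)h([G,G])}$. The factor $k-1$ comes from the Guivarch norm ($\mathrm{ad}_a$ has operator norm at most $C\norm{a}_S^{k-1}$, Lemma~\ref{prop:NormOfCommut}), not from iterating over $k-1$ layers. The factor $h([G,G])$ is the rank of the target of $\mathrm{ad}_a$. The index of the final quotient is then at most $(mq)^{h(G)-h(Z(G))+1}$, obtained from $\mf g/q\mf g$ by shrinking the image of the center to a cyclic group (Lemma~\ref{prop:MinQuotientCyclicCenter}). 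This step, which your outline never mentions, is where $h(G)-h(Z(G))+1$ comes from.

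Your claim that a quotient of modulus $N$ can be taken of size about $N^{h([G,G])}$, up to an abelianization factor, is false, because the abelian part dominates. In the Heisenberg Lie ring with $a=px$ and $c=z$, every separating quotient has order at least $p^3$, while a minimal one has derived part cyclic of order $p$. Similarly, for $a_p$ and $e_l$ in $\mf g_{k,l}$, a minimal separating quotient has derived part of order $p^l$ but total order at least $p^{l(k-l)}$.

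Your elementary-divisor bound is in fact true, since $\mathrm{ad}_x$ has entries $O(n^{k-1})$ and rank at most $h(G)-h(Z(G))$. Combined with the correct quotient size, however, it yields a different exponent. The stated $m$ comes out only by swapping the roles of the two factors.
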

\noindent	Here $h$ denotes the Hirsch length, a classical invariant for nilpotent groups, generalizing the rank of abelian groups. In particular, the exponent is linear in the nilpotency class and quadratic in the Hirsch length, a significant improvement over the previously known bounds of \cite{dere2019effective}.

	In the $2$-step nilpotent case, the authors of \cite{dere2019effective} further claim a precise expression for the conjugacy separability growth. Their claim would imply that $$
	\conj_G(n)\preceq n^{h(G)}.
	$$
	However, the proof relies on \cite[Theorem 1.1]{pengitore2015effective}, which was later disproved. We emphasize that the remaining results of \cite{dere2019effective} do not rely on this theorem. In this situation, we obtain the following estimate, sharper than \ref{prop:AlgUpperBound}:	
	\begin{result}\label{prop:AlgUpperBound2}
		Let $G$ be a torsion-free finitely-generated $2$-step nilpotent group, then $$
		\conj_G(n)\preceq n^m
		$$
		where $$m=\max \left\{ h(G),h\left([G,G]\right) ~ \lfloor h(G)-h(Z(G))\rfloor_{\mathrm{odd}}\right\}$$
	\end{result}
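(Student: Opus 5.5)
We pass to the rational Mal'cev completion, or equivalently to a Lie ring $L$ associated with $G$, using the translation of conjugacy separability into Lie rings announced in the abstract. For a $2$-step nilpotent group, conjugacy of $g$ and $h$ is rigid. Writing $h=gz$, we need $g$ and $h$ to have the same image modulo $[G,G]$, and the difference $z\in[G,G]$ must lie in the subgroup $[g,G]=\{[g,x] : x\in G\}$, which is a subgroup of $[G,G]$ because the map $x\mapsto [g,x]$ is a homomorphism in class $2$. So for non-conjugate $g,h$ of norm at most $n$ there are two cases. Case (a): $g$ and $h$ differ modulo $[G,G]$. Case (b): $h=gz$ with $z\in[G,G]\setminus[g,G]$.

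Case (a) is handled by residual finiteness of an abelianisation-type quotient. Elements of norm at most $n$ have coordinates of size $O(n)$ in $G/[G,G]$, or $O(n^2)$ in the central coordinates. If the images differ in the free abelian quotient $G/\sqrt{[G,G]}$, a cyclic quotient of order $O(n)$ suffices. If they agree there but differ inside the isolator, a quotient of size polynomial in $n$ of degree at most $h(G)$ should suffice. Concretely, we take the quotient $G/G^{p}$-type, or $G/N$ with $N$ a suitable finite index characteristic subgroup, of order $p^{h(G)}$ with $p$ a prime of size $O(n)$ not dividing the relevant coordinate. This gives the $h(G)$ term in the maximum.

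Case (b) is the heart. We consider the bilinear map $\omega: V\times V\to W$ with $V=G/Z(G)$ (after isolating, of rank $r=h(G)-h(Z(G))$) and $W\supseteq [G,G]$ of rank $h([G,G])$, and the sublattice $\omega(\bar g,V)\subseteq W$. The condition $z\notin\omega(\bar g,V)$ has to be witnessed modulo some integer $N$. We want $Q=G/G_N$, where $G_N$ is generated by $N$-th powers in a Mal'cev basis, of order roughly $N^{h(G)}$, or more economically $N^{h([G,G])}$ times a smaller factor from the abelian part, such that $z\notin \omega(\bar g,V)+NW$ while conjugacy in $Q$ is still governed by the same linear condition. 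The key steps are as follows.

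1. Use Smith normal form of the integer matrix $M_g$ of $\omega(\bar g,\cdot)$. Its entries are $O(n)$, and its rank $s$ is at most $r$. Since $\omega(\bar g,\bar g)=0$, the form is alternating, so the rank of $\omega(\bar g,\cdot)$ restricted appropriately is at most $r-1$. For alternating structure the effective rank is bounded by the largest odd number $\le r$ minus one, which is where $\lfloor r\rfloor_{\mathrm{odd}}$ enters.
2. The elementary divisors are bounded by minors, of size $O(n^{s})$. Choose $N$ to be either a prime power dividing an elementary divisor beyond what $z$ allows, or a prime detecting a coordinate of $z$ in the complement of the saturation of the image, of size $O(n^2)$. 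This gives $N\preceq n^{\lfloor r\rfloor_{\mathrm{odd}}}$ roughly.
3. Check that in $G/G_N$ the conjugacy class of $\bar g$ equals $\bar g\cdot(\omega(\bar g,V)+NW)$, so $\bar h$ is not conjugate to $\bar g$. The quotient needs only order $N$ in each $W$-direction, and only the modulus needed for the $V$-direction. We arrange $|Q|\preceq N^{h([G,G])}\cdot(\text{lower order})$, giving exponent $h([G,G])\lfloor r\rfloor_{\mathrm{odd}}$.

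The main obstacle will be step 2: bounding the modulus $N$ sharply by $n^{\lfloor r\rfloor_{\mathrm{odd}}}$ rather than $n^{r}$. My first attempt would exploit that the alternating matrix $\omega(\bar g,\cdot)$ has $\bar g$ in its kernel, and use Pfaffian-type bounds on its invariant factors. A secondary difficulty is ensuring that the $V$-part of $Q$ does not cost more than $n^{h(G)}$ overall, so that the two cases combine into the stated maximum.
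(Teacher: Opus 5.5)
Your translation to the Lie ring and your reduction to $h=gz$ with $z$ central and $z\notin\omega(\bar g,V)$ match the paper. Step~3, however, is a genuine gap: the allocation of the quotient's size you propose there cannot be achieved. Suppose $Q=\mf g/I$ separates $z$ from $\omega(\bar g,V)$. Then $I\cap W$ must be small enough to keep $z$ out. The ideal condition $[I,\mf g]\subseteq I\cap W$ then forces the $V$-projection $\pi_V(I)$ into the radical of $\omega$ modulo $I\cap W$, and in general this radical is not much larger than $NV$ plus a few special directions such as $\bar g$. So the $V$-part of $Q$ is the dominant factor, not a lower-order one.

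Concretely, take the Heisenberg Lie ring $[x,y]=z$ with $a=px$ and $c=z$. Every separating ideal satisfies $I\cap\ZZ z\subseteq p\ZZ z$. Since $[v,x],[v,y]\in I\cap\ZZ z$ for $v\in I$, this forces $\pi_V(I)\subseteq pV$, hence $|Q|\geq p^3$. Your scheme ($h_2=1$, $r=2$, $N\preceq p$) predicts $p$ times a lower-order factor. Likewise, in the sharpness family $\mf g_{k,l}$ the centre contributes $p^{l}$ while the $V$-part contributes $p^{l(k-l-1)}$.

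This also shows the $h(G)$ term of the maximum does not come from your case (a). Case (a) costs only $O(\log n)$, or a constant when $hg^{-1}\in\sqrt{[G,G]}\setminus[G,G]$. The $h(G)$ term arises inside case (b), from non-primitive $\bar g$ as in this example.

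Step~2 is not where the difficulty lies. Smith normal form immediately gives $N\preceq n^{\min(h_2,r-1)}$. Taking simply $Q=\mf g/N\mf g$, your Steps 1--2 give the valid but weaker exponent $h\cdot\min(h_2,r-1)$. Finally, the parity claim in Step~1 is false. In the free $2$-step nilpotent Lie ring on four generators, $\omega(v_1,\cdot)$ has rank $3=r-1$ with $r=4$ even, so parity does not enter through the rank of $\omega(\bar g,\cdot)$.

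The paper allocates the two exponents the other way round. After splitting off an abelian factor so that $Z(\mf g)=\mf g_2$, it first makes $a\in Z(\mf g)^\perp$ primitive. With $L=\ind{a}$, if $L\nmid\ind{c}$ the quotient $\mf g/L\mf g$ of order $L^{h}\leq n^{h}$ already works. Otherwise one divides $a$ and $c$ by $L$ and afterwards multiplies the kernel by $L$, at cost $L^{h}$.

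For primitive $a$, the paper separates $c$ from $[a,\mf g]$ inside $Z(\mf g)$ by a \emph{cyclic} quotient of order $q\preceq\max\{\det[a,\mf g],\log n\}\preceq n^{h_2}$. This uses $\norm{[a,\mf g]}_S\preceq\norm{a}_S$ and the determinant bound for sublattices, so it is the modulus that carries the exponent $h_2$. Write the kernel as $K_0=K_1+q\ZZ v_l$ with $K_1$ isolated of corank one. The bracket then induces an alternating form with values in $Z(\mf g)/K_1\cong\ZZ$. The ideal $K=K_0+qZ(\mf g)^\perp+\langle a\rangle+\langle x_0\rangle$ contains $a$ itself, which is allowed since $[a,\mf g]\subseteq K_0$. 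When $r$ is even it also contains a primitive radical vector $x_0$ of this form on the odd-rank complement of $\langle a\rangle+Z(\mf g)$. Its index is $q\cdot q^{\lfloor r\rfloor_{\mathrm{odd}}-1}$, so the $V$-part carries the exponent $\lfloor r\rfloor_{\mathrm{odd}}$. The total $L^{h}(n/L)^{h_2\lfloor r\rfloor_{\mathrm{odd}}}$ is then at most $n^{\max\{h,h_2\lfloor r\rfloor_{\mathrm{odd}}\}}$.

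Your proposal is missing three ideas: the cyclic reduction of the centre, the use of $a$ and $x_0$ as radical directions of the resulting scalar alternating form, and the primitive/non-primitive split.
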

	The term $h(G)$ is only needed when $h([G,G])=1$, for groups with larger derived subgroup, the second term dominates. Moreover, this estimate is sharp for a family of groups whose abelianization and derived subgroup have arbitrarily large rank. In particular, we obtain infinitely many examples for which $n^{h(G)} \prec  \conj_G(n),$ showing that the proposed expression $\conj_G(n)\preceq n^{h(G)}$ fails in general.
    
Our main tool, which is of independent interest, is a translation of conjugacy separability growth to properties of Lie rings, allowing to apply methods from Lie theory to compute conjugacy separability. This extends the ideas from \cite{dmv2026}, where these were used to give a precise computation of $\mathrm{RF}_G$ for virtually nilpotent groups. In \Cref{sec:AlgUpperBoundGeneral} a partial translation is given for arbitrary nilpotency classes. Although this does not completely translate the problem, it already gives the upper bound in \Cref{prop:AlgUpperBound}. Next, in \cref{sec:groupsandLierings}, we focus on the case of $2$-step nilpotent groups, where we can completely translate our problem and reformulate it in the language of Lie rings:
	\begin{result}
		\label{prop:conjequal}
		Let $G$ be a non-abelian torsion-free 2-step nilpotent group with corresponding Lie ring $\mf g$. Let $S^\prime$ and $S$ be generating sets for $\mf g$ and $G$ respectively, then $$\conj_{\mf g,S^\prime}(n)\simeq\conj_{G,S}(n).$$
	\end{result}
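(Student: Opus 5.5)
The plan is to compare finite quotients of $G$ and of $\mf g$ directly, through the (rational) Mal'cev correspondence. In the $2$-step case it reduces to the explicit formulas
\[
\log(\exp(x)\exp(y)) = x+y+\tfrac12[x,y], \qquad \log\bigl(h^{-1}gh\bigr)=\log g+[\log g,\log h].
\]
I take $\mf g\subseteq \log G$ to be the full-rank Lie ring attached to $G$, with $[\mf g,\mf g]\subseteq \mf g\cap Z$. Two elements $x,y\in\mf g$ are conjugate when $y\in x+[x,\mf g]$, and conjugacy is separated in quotients by finite-index ideals.

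The proof has three steps, each only up to multiplicative constants, which is all the relation $\simeq$ requires.

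\textbf{Step 1 (norms).} I would show that the word norm in $G$ with respect to $S$ and the Lie-ring word norm in $\mf g$ with respect to $S^\prime$ are comparable under $\log$, up to constants. The natural route is to compare both with the standard coordinate norm: abelianization coordinates of size $\le n$ and central coordinates of size $\le n^2$. This is standard for nilpotent groups. For Lie rings, the same argument as in the residual finiteness translation of \cite{dmv2026} should apply. Consequently $\log$ maps the ball of radius $n$ in $G$ into a ball of radius $Cn$ in $\mf g$, and $\exp$ does the same in the other direction.

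\textbf{Step 2 (conjugacy classes match).} By the second formula, $g,h\in G$ are conjugate if and only if $\log h\in\log g+[\log g,\log G]$. Since $\log G$ and $\mf g$ are commensurable and only central terms are affected, conjugacy in $G$ translates into conjugacy in $\mf g$ up to a bounded index. I expect to arrange that $\log$ actually restricts to a bijection between conjugacy classes, possibly after a harmless rescaling of the central part. If that fails, the bounded discrepancy is absorbed into the constants.

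\textbf{Step 3 (quotients).} Given a finite-index ideal $I\subseteq\mf g$ separating $x$ and $y$, I would build a normal subgroup of $G$ by taking $N=\exp(I^\prime)$ with $I^\prime=2I+[I,\mf g]\cdot 2$, or a similar ideal in which $\tfrac12[\cdot,\cdot]$ is integral. The first formula then makes $N$ a normal subgroup with $|G/N|\le C|\mf g/I|$. If the images of $\exp x$ and $\exp y$ were conjugate in $G/N$, then $y\in x+[x,\mf g]+I^\prime$, and I must deduce a contradiction with separation modulo $I$. Since $I^\prime\subseteq I$, this already gives $y\in x+[x,\mf g]+I$, so separation in $\mf g/I$ forces non-conjugacy in $G/N$. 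The converse direction is symmetric: a normal subgroup $N$ gives the ideal $\log N\cap\mf g$ shrunk by a factor $2$ (or $4$), and the resulting ideal is intersected down in the same way. Combining Steps 1 to 3 yields $\conj_{G,S}(n)\le C\conj_{\mf g,S^\prime}(Cn)$ and the reverse inequality.

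The main obstacle is the prime $2$, both in Step 3 and in Step 2. Odd-order quotients correspond exactly via the Lazard correspondence, but the factor $\tfrac12$ prevents $\log N$ from being an ideal, and prevents $\exp I$ from being a subgroup, in general. The $2$-part of the quotient can be handled by passing to $2I$ or $4I$, at the cost of a bounded factor in the index. What must be checked carefully is that this shrinking does not destroy separation of conjugacy classes, since conjugacy separability growth is not obviously a commensurability invariant. I would first try to verify that the separating condition $y\notin x+[x,\mf g]+I$ is stable under $I\mapsto 2I$, using the containment above. If it is not, I would handle the $2$-primary and odd parts separately via the Chinese remainder theorem.
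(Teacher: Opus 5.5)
\textbf{The gap is in Step 2, and it comes from your choice $\mf g\subseteq\log G$.} In a $2$-step group, $\exp x\sim\exp y$ in $G$ if and only if $y-x\in[x,\log G]$. Conjugacy in the Lie ring, by contrast, is governed by $[x,\mf g]$. These two sets coincide only if $\mf g$ and $\log G$ differ by central elements, and a Lie ring contained in $\log G$ generally cannot achieve this. Your remark that ``only central terms are affected'' is therefore not available.

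Take the Heisenberg group. There $\log G=\{aX+bY+(c+\tfrac{ab}{2})Z\}$, so a lattice inside $\log G$ containing lifts $X+c_1Z$ and $Y+c_2Z$ would need $c_1,c_2\in\ZZ$ and $c_1+c_2\in\tfrac12+\ZZ$, which is impossible. For instance, $\mf g=2\ZZ X+\ZZ Y+\ZZ Z\subseteq\log G$ has
$[Y,\mf g]=2\ZZ Z\subsetneq\ZZ Z=[Y,\log G]$.
So $Y$ and $Y+Z$ are non-conjugate in $\mf g$, although $y$ and $yz$ are conjugate in $G$.

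Such pairs have no counterpart in $G$, and nothing in your argument bounds their depth by $C\conj_G(Cn)$. A bounded index of $[x,\mf g]$ in $[x,\log G]$ does not give a bounded factor in $\cd$; the obvious way to separate such a pair, via $I=\det([x,\mf g])\,\mf g$, costs a power of $n$. ``Absorbing the discrepancy into constants'' is exactly the commensurability-invariance question you flag yourself, and the proposal does not resolve it.

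The same mismatch breaks Step 3 as written:
\begin{itemize}
\item Conjugacy of $\exp x,\exp y$ modulo $N$ only gives $y\in x+[x,\log G]+I'$, not $y\in x+[x,\mf g]+I'$.
\item Elements $g\in G$ with $\log g\notin\mf g$ are not covered at all.
\item $\exp(2I)$ need not be normal in $G$, since $I$ is only $\mathrm{ad}(\mf g)$-stable. In the example, $I=2\ZZ X+\ZZ Y+2\ZZ Z$ is an ideal of $\mf g$, but $[X,2Y]=2Z\notin 2I$.
\end{itemize}

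\textbf{The missing idea, which is how the paper proceeds, is to enlarge $\log G$ rather than shrink it.} Take $\mf g=\log G+\tfrac12[\log G,\log G]$, the additive span of $\log G$; this is the ``corresponding'' Lie ring in the statement. It is closed under $*$ and differs from $\log G$ only centrally, so $[x,\mf g]=[x,\log G]$. Moreover $H=(\mf g,*)$ is a group with $[H:G]<\infty$ and $H=G\,Z(H)$.

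One then proves separately that $\conj_G\simeq\conj_H$ for such finite-index central extensions:
\begin{itemize}
\item Conjugation by $Z(H)$ is trivial, so elements of $G$ are conjugate in $H$ if and only if they are conjugate in $G$.
\item Every $N\normal G$ is normal in $H$, with $H/N=(G/N)\,Z(H/N)$.
\item Every element of $H$ can be moved into $G$ by a central element from a fixed finite set.
\item Pairs lying in distinct cosets of $G$ are separated by $H/G$.
\end{itemize}
Only for $H$, where $\log$ is onto $\mf g$, is the translation exact. There both directions go through strong ideals of bounded index inside a given ideal or normal subgroup.

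Note also that the prime $2$ is not the real difficulty. Shrinking $I$ trivially preserves separation. What needs genuine care in the converse direction is producing an ideal of bounded index inside $N$ at all, since $\log N$ is not additively closed. The paper does this by shrinking only the non-central layer, taking $md\,\log N+(\log N\cap\mf g_2)$ for suitable constants $m,d$.
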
 
	Here $\conj_{\mf g,S^\prime}$ is the appropriate analogue of $\conj_{G,S}$, which we introduce in Definition \ref{sepinLiering}. This restatement allows us to demonstrate \cref{prop:AlgUpperBound2}. The translation to the setting of Lie rings will then also allow us to demonstrate the sharpness of \Cref{prop:AlgUpperBound2} in \cref{sec:AlgBoundScharp}.

	\section{Preliminaries}
	\subsection{Effective separability}
	\numberwithin{theorem}{subsection}
	In a group $G$, two elements $g_1$ and $g_2$ are called \textbf{conjugate}, denoted $g_1\sim g_2$, if there exists some $h\in G$ such that $hg_1h\inv=g_2$. Analogously we denote $g_1\not\sim g_2$ if $g_1$ and $g_2$ are non-conjugate. A group $G$ is then called \textbf{conjugacy separable} if for all pairs $g_1,g_2\in G$ of non-conjugate elements, there exists a finite quotient $\varphi: G \to Q$ such that $\varphi(g_1)\not\sim\varphi(g_2)$.
	The aim is to quantify how the size of the finite quotient $\varphi(G)$ depends on $g_1$ and $g_2$.
	A basic tool is the word norm.
	
	\begin{definition}
	Let $S$ be a finite generating set of a group $G$. The \textbf{word norm} $ \|\cdot\|_S:G\to\NN $ is defined by $\|e_G\|_S=0$ and $\|g\|_S = \min\Bigl\{ n\in\NN \;\Big|\; g=s_1\cdots s_n,\; s_i\in S\cup S^{-1} \Bigr\}$ for $g \neq e_G$.
	\end{definition}
	A change of generating set only changes the word norm up to a constant, see \cite{MR3729310}.
	\begin{lemma}
		\label{prop:normAreEquiv}
		Let $G$ be a finitely generated group and let $S$ and $T$ be two generating subsets of $G$. There exists a constant $C > 0$ such that for every group element $g$ the following holds: $$\norm{g}_T\leq C\norm{g}_S $$
	\end{lemma}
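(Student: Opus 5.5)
The idea is to express each generator in $T$ as a word in $S$, and then substitute these expressions into a geodesic word for $g$ over $S$.

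First I would note that we may assume $S$ and $T$ are finite; this is the setting in which the word norm was defined. Since $S$ generates $G$, every $t\in T$ can be written as a word in $S\cup S^{-1}$. The same is then true of $t^{-1}$: invert the word and reverse its order. So for each $t\in T$ the norm $\norm{t}_S$ is a finite natural number. I would set
$$C=\max\set{1,\ \max_{t\in T}\norm{t}_S}.$$
This maximum exists because $T$ is finite, and $C>0$ by construction.

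Next I would swap the roles of the two generating sets. Each $s\in S\cup S^{-1}$ is likewise a word in $T\cup T^{-1}$ of length $\norm{s}_T$. Since the inequality we want bounds $\norm{\cdot}_T$ in terms of $\norm{\cdot}_S$, these are the expressions we actually substitute. So in fact I would take
$$C=\max\set{1,\ \max_{s\in S}\norm{s}_T},$$
using that $\norm{s^{-1}}_T=\norm{s}_T$.

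The main step is then as follows. Let $g\in G$ and put $n=\norm{g}_S$. If $g=e_G$, both sides are $0$ and there is nothing to prove. Otherwise choose a word $g=s_1\cdots s_n$ with $s_i\in S\cup S^{-1}$. Replace each $s_i$ by a word in $T\cup T^{-1}$ of length at most $C$. Concatenating these gives a word for $g$ over $T\cup T^{-1}$ of length at most $Cn$, so $\norm{g}_T\le C\norm{g}_S$.

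There is no real obstacle here. The only points needing care are to make the substitution in the correct direction, as in the second paragraph, and to use finiteness of the generating sets so that $C$ is well defined.
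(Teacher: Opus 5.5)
Your proof is correct. The paper gives no argument of its own and simply cites \cite{MR3729310}, and your approach is the standard one behind that reference: with $C=\max\set{1,\max_{s\in S}\norm{s}_T}$, rewriting a geodesic word for $g$ over $S\cup S^{-1}$ letter by letter yields a word over $T\cup T^{-1}$ of length at most $C\norm{g}_S$. The constant $\max_{t\in T}\norm{t}_S$ from your first paragraph is the one for the reverse inequality and can simply be deleted.
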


	Given the word norm, we can introduce the following two functions, both measuring the size of the finite quotients:
	\begin{definition}
		Let $G$ be a conjugacy separable group, then the \textbf{conjugacy depth function} $\cd_G: G\times G\rightarrow \NN$ is defined as $\cd_G(g_1,g_2) = 1$ if $g_1 \sim g_2$, otherwise $$\cd_G(g_1,g_2) = \min\set{[G:N]\mid N\normal G, g_1N\not\sim g_2N \text{ in } G/N}.$$
	\end{definition}
	\begin{definition}
		Let $G$ be a finitely generated conjugacy separable group with a finite generating set $S$, then we define the \textbf{conjugacy separability growth function} $\conj_{G,S}$ as:
		$$
		\conj_{G,S}:\NN\rightarrow \NN:n\mapsto \max\set{\cd_G(g_1,g_2)\mid g_1,g_2\in G,\norm{g_1}_S\leq n,\norm{g_2}_S\leq n}.
		$$
	\end{definition}
	Since $G$ is conjugacy separable, it is guaranteed that the above functions only take finite values.
	If we change the generating set, then the word norm and by extension the conjugacy separability growth function might change. To mitigate this, we introduce an equivalence class on such functions.
	
	\begin{definition}
		Let $f,g:\NN_0\rightarrow\RR^+_0$ be non-decreasing functions. Then we say $f\preceq g$ if there exists some constant $C \in \mathbb{N}$ such that $f(n)\leq Cg(Cn)$ for all $n\in \NN_0$. Furthermore if $f\preceq g$ and $g\preceq f$, then we say $f\simeq g$
	\end{definition}
		The above definition deals only with positive functions. If we want to compare to a function that also takes negative values (for instance $\log$), then we modify our equivalence relation as follows. If $f,g:\NN_0\rightarrow\RR$ are non-decreasing functions, then we denote $f(n)\preceq g(n)$ when $\max(1,f(n))\preceq \max(1,g(n)).$ 	When $f$ and $g$ only take strictly positive values, then the two definitions are equivalent.
        
        The ambiguity in the choice of generating set, can now be completely suppressed.
	\begin{lemma}\label{prop:generatorinvariance}
		Let $G$ be a finitely generated conjugacy separable group with finite generating sets $S$ and $T$, then $\conj_{G,S}\simeq\conj_{G,T}$.
	\end{lemma}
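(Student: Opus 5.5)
By symmetry it suffices to prove $\conj_{G,S}\preceq\conj_{G,T}$; exchanging the roles of $S$ and $T$ then gives the reverse inequality and hence $\conj_{G,S}\simeq\conj_{G,T}$. The plan is to compare the two functions directly through Lemma~\ref{prop:normAreEquiv}. The key observation is that the conjugacy depth function $\cd_G$ is defined without any reference to a generating set. Only the range of pairs over which the maximum is taken depends on $S$ or $T$.

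Concretely, apply Lemma~\ref{prop:normAreEquiv} to obtain a constant $C>0$ with $\norm{g}_T\le C\norm{g}_S$ for all $g\in G$. We may enlarge $C$ to be a positive integer, as the definition of $\preceq$ requires. Now fix $n\in\NN_0$ and let $g_1,g_2\in G$ with $\norm{g_1}_S\le n$ and $\norm{g_2}_S\le n$. Then $\norm{g_i}_T\le Cn$, so the pair $(g_1,g_2)$ is among the pairs over which $\conj_{G,T}(Cn)$ takes its maximum. Hence $\cd_G(g_1,g_2)\le\conj_{G,T}(Cn)$. Taking the maximum over all such pairs gives
\[
\conj_{G,S}(n)\le \conj_{G,T}(Cn)\le C\,\conj_{G,T}(Cn),
\]
which is exactly $\conj_{G,S}\preceq\conj_{G,T}$ with constant $C$.

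There is one housekeeping point to check. The definition of $\preceq$ is stated for non-decreasing functions into $\RR^+_0$, so we need $\conj_{G,S}$ to be such a function. It is non-decreasing because the set of pairs of $S$-norm at most $n$ grows with $n$. It takes finite values because each ball is finite (as $S$ is finite) and $G$ is conjugacy separable, so $\cd_G$ is finite. It is positive because $\cd_G\ge 1$.

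There is no real obstacle in this argument; the only step with content is the reduction to Lemma~\ref{prop:normAreEquiv} together with the fact that $\cd_G$ does not depend on the generating set.
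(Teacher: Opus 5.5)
Your proposal is correct and takes the same approach as the paper, which simply states that the lemma is an immediate consequence of \cref{prop:normAreEquiv}. Your argument spells out that reduction: $\cd_G$ does not depend on the generating set, and the norm comparison gives $\conj_{G,S}(n)\le\conj_{G,T}(Cn)$.
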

	This lemma is an immediate consequence of \cref{prop:normAreEquiv}. To compare two functions with $\preceq$, it suffices to understand the function in sufficiently many points. 
	
	\begin{lemma}
		\label{prop:comparegeometric}
		Let $f,g:\NN_0\rightarrow\RR^+_0$ be non-decreasing functions. Let $C>1$ and let $X$ be a set of natural numbers such that for every natural number $m$ it holds that $X \cap [m;Cm] \neq \emptyset$. 
	If $f(x)\leq g(x)$ for all $x\in X$ then $f\preceq g$
	\end{lemma}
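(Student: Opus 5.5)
To prove \cref{prop:comparegeometric}, I would find for each $n\in\NN_0$ a point $x\in X$ with $n\le x\le Cn$ (up to a fixed multiplicative constant), and then use that $f$ and $g$ are non-decreasing: $f(n)\le f(x)\le g(x)\le g(C'n)$. This gives the defining inequality $f(n)\le C'' g(C''n)$ with a single integer constant $C''$.

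Concretely, fix $n\ge 1$. By hypothesis, applied with $m=n$, there is $x\in X\cap[n;Cn]$. Monotonicity of $f$ and $n\le x$ give $f(n)\le f(x)$. The assumption on $X$ gives $f(x)\le g(x)$. Monotonicity of $g$ and $x\le Cn\le \lceil C\rceil n$ give $g(x)\le g(\lceil C\rceil n)$. Hence $f(n)\le g(\lceil C\rceil n)$ for all $n\ge1$.

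It remains to handle $n=0$, if $\NN_0$ is meant to include $0$; this is the only point requiring any care. Apply the hypothesis with $m=1$ to get some $x_0\in X\cap[1;C]$. Then $f(0)\le f(x_0)\le g(x_0)$, but we must compare this with $g(0)$, and $g(0)$ could be smaller. Since $g$ takes values in $\RR^+_0$, which here means strictly positive reals, we have $g(0)>0$. Choose an integer $K\ge \max\bigl(\lceil C\rceil, f(0)/g(0)\bigr)$; then $f(0)\le K g(0)=Kg(K\cdot 0)$. With this $K$, for $n\ge 1$ we also have $f(n)\le g(\lceil C\rceil n)\le K g(Kn)$, using monotonicity of $g$ and $g\ge0$. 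Therefore $f\preceq g$ with constant $K$. If $\NN_0$ denotes the positive integers, the $n=0$ case is vacuous and $K=\lceil C\rceil$ suffices.
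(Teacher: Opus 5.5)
Your proof is correct and is the same argument as the paper's: pick $x\in X\cap[n;Cn]$ and chain $f(n)\le f(x)\le g(x)\le g(Cn)$ using that both functions are non-decreasing. Your extra care fills in details that the paper's one-line proof leaves implicit. You round $C$ up to $\lceil C\rceil$, so that the constant is an integer as the definition of $\preceq$ requires and $g$ is evaluated at a point of its domain. You also treat a possible $n=0$ separately, using $g(0)>0$.
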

	\begin{proof}
		Let $m$ be arbitrary and take $x\in X \cap [m,Cm]$. Then we have the following inequalities:
		\begin{align*}
		f(m) \leq f(x) \leq g(x) \leq g(Cm).
        \end{align*}
		Since this holds for all values of $m$, we thus have $f\preceq g$	
	\end{proof}
	The above lemma states that to check $f\preceq g$, we only need to check this on a sequence that grows more slowly than a geometric sequence. One example is the sequence of primes on an arithmetic progression.
	
		\begin{lemma}\label{prop:psubgeom}
		Let $a$ and $b$ be coprime positive integers. The set $X = \{p \in  a \NN + b \mid p \text{ is prime}\}$ satisfies the conditions of Lemma \ref{prop:comparegeometric}.
	\end{lemma}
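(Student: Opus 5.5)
We need to show that for every natural number $m$ (which we may assume large enough), the interval $[m, Cm]$ contains a prime $p \equiv b \pmod a$, for some constant $C>1$ depending only on $a$ and $b$. The plan is to invoke the prime number theorem for arithmetic progressions (de la Vallée Poussin, or the quantitative form of Dirichlet's theorem). It states that
\[
\pi(x;a,b) \sim \frac{x}{\varphi(a)\log x} \qquad \text{as } x\to\infty.
\]
Here $\pi(x;a,b)$ counts the primes $p\le x$ with $p\equiv b \pmod a$. Coprimality of $a$ and $b$ is exactly the hypothesis needed for this asymptotic.

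The key step is to take $C=3$ and compare $\pi(3x;a,b)$ with $\pi(x;a,b)$. By the asymptotic, there is an $x_0$ such that for all $x\ge x_0$:
\[
\pi(x;a,b)\le \tfrac{5}{4}\,\frac{x}{\varphi(a)\log x}
\qquad\text{and}\qquad
\pi(3x;a,b)\ge \tfrac{3}{4}\,\frac{3x}{\varphi(a)\log (3x)}.
\]
Since $\log(3x)/\log x\to 1$, we may also enlarge $x_0$ so that $\log(3x)\le \tfrac{6}{5}\log x$ for $x\ge x_0$. Then
\[
\pi(3x;a,b)\ge \tfrac{9}{4}\cdot\tfrac{5}{6}\,\frac{x}{\varphi(a)\log x}=\tfrac{15}{8}\,\frac{x}{\varphi(a)\log x}>\pi(x;a,b).
\]
So for $m\ge x_0$ there is a prime $p\equiv b \pmod a$ in the interval $(m,3m]$. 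Any fixed ratio $C>1$ would work the same way; choosing $C=3$ just keeps the constants easy.

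Finally we handle the small values $m<x_0$. By Dirichlet's theorem the set $X$ is nonempty, so let $p_0$ be its least element. Put
\[
C=\max\bigl(3,\; p_0,\; \max(p_0,x_0)\bigr).
\]
For $1\le m<x_0$, consider the smallest element $p\in X$ with $p\ge m$. If $m\le p_0$, then $p=p_0$, and $p_0\le C\le Cm$. Otherwise $m>p_0$, and we apply the large-$m$ argument at the point $x_0$: there is a prime of $X$ in $(x_0,3x_0]$, and in any case some element of $X$ lies in $[m,3x_0]$. Since $m\ge 1$, we have $3x_0\le Cm$ as soon as $C\ge 3x_0$. So taking $C=\max(3x_0,p_0,3)$ covers every $m$, and the conditions of \cref{prop:comparegeometric} are satisfied. (If $\NN$ includes $0$, the case $m=0$ is covered by $p_0\in[0,\cdot]$, or is simply excluded as the lemma intends.)

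The only substantive input is the prime number theorem in arithmetic progressions. A weaker Bertrand-type statement for progressions would also suffice, but I would cite the standard asymptotic rather than reprove anything. There is no real obstacle beyond bookkeeping the constants for small $m$.
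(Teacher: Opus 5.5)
Your proposal is correct and follows the same route as the paper, which simply cites the prime number theorem for arithmetic progressions; you spell out the routine deduction of a prime in $(m,3m]$ for large $m$ and absorb the finitely many small $m$ into the constant $C$. The one bookkeeping detail to add is taking $x_0 \ge a+b$, so that the primes you find actually lie in $a\NN+b$ and not merely in the residue class of $b$ modulo $a$.
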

	\begin{proof}
		This is a consequence of the prime number theorem for arithmetic progressions as stated and proven in \cite{delavallepoussain1899Recherces}.
	\end{proof}
	We conclude this section by recalling the conjugacy separability growth of the abelian groups, which will be important in certain arguments.
	\begin{proposition}[\cite{bou2010quantifying}]
		Let $G$ be an infinite finitely generated abelian group, then $\conj_G(n)\simeq \log(n)$
	\end{proposition}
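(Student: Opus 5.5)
Write $G\cong \ZZ^r\times F$ with $F$ finite and $r\geq 1$, and use the generating set consisting of the standard basis of $\ZZ^r$ together with the elements of $F$; by \cref{prop:generatorinvariance} this choice is harmless. Since $G$ is abelian, two elements are conjugate if and only if they are equal, and a quotient separates $g_1\neq g_2$ exactly when it does not kill $g=g_1g_2^{-1}$. Hence $\cd_G(g_1,g_2)$ is the smallest index of a normal subgroup avoiding $g$. Moreover $\norm{g}\leq 2n$ whenever $\norm{g_1},\norm{g_2}\leq n$, and every $g$ with $\norm{g}\le n$ has the form $g_1g_2^{-1}$ with $g_2=e$. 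So $\conj_G(n)$ agrees, up to replacing $n$ by $2n$, with the residual finiteness function $\mathrm{RF}_G(n)$. The whole problem therefore reduces to showing $\mathrm{RF}_G(n)\simeq\log n$.

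For the upper bound, take $g=(v,f)\neq e$ with $\norm{g}\leq n$. If $v=0$, then $f\neq e$ and the projection onto $F$ separates $g$ in a quotient of bounded size $|F|$. If $v\neq 0$, some coordinate $v_i$ is nonzero with $|v_i|\leq n$. Compose the projection to that coordinate with reduction mod $p$, where $p$ is the smallest prime not dividing $v_i$. The product of all primes up to $x$ is $e^{(1+o(1))x}$, which exceeds $n$ once $x\geq C\log n$. Hence $p\preceq\log n$, and the quotient $\ZZ/p$ has order $p\le C\log n$.

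For the lower bound, take $v=\mathrm{lcm}(1,2,\dots,m)\,e_1\in\ZZ^r$. Its norm is $e^{(1+o(1))m}$, so $m\simeq\log n$ when the norm is $n$. Suppose a finite quotient $Q$ does not kill $v$. The image of $e_1$ in $Q$ has order $d\le|Q|$, and $d$ cannot divide $\mathrm{lcm}(1,\dots,m)$, since otherwise $v$ would be killed. Hence $d>m$, so $|Q|>m\succeq\log n$. To pass from this sequence of values of $n$ to all $n$, I would use monotonicity together with \cref{prop:comparegeometric}: consecutive lcm values grow by factors of at most $m+1$, which is only polynomial in $\log$. Alternatively, I would argue directly that for every $n$ one can take $m\approx \log n/2$. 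The main obstacle here is only the bookkeeping of constants through the prime number theorem (Chebyshev bounds suffice); the rest is routine.
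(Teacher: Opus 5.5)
Your proof is correct and follows the same route as the paper, which only cites \cite{bou2010quantifying} and remarks that the result follows from \cref{prop:pnondevider}, precisely your step of reducing a nonzero coordinate $v_i$ modulo the smallest prime not dividing it; your $\mathrm{lcm}(1,\dots,m)$ lower bound is the standard argument from that reference. One small correction: \cref{prop:comparegeometric} does not literally apply to $X=\{\mathrm{lcm}(1,\dots,m)\}$, since $\mathrm{lcm}(1,\dots,m+1)/\mathrm{lcm}(1,\dots,m)=m+1$ whenever $m+1$ is prime, so no fixed $C$ works; your direct alternative is the right fix: for $m$ maximal with $\mathrm{lcm}(1,\dots,m)\le n$, monotonicity gives $\conj_G(n)>m$, while Chebyshev's bound gives $\log n<\log\mathrm{lcm}(1,\dots,m+1)=O(m+1)$.
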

	This is a consequence of the following reformulation of the prime number theorem that will be used repeatedly to construct finite quotients of controlled index.
	\begin{lemma}
		\label{prop:pnondevider}
		Let $a$ and $b$ be coprime integers. There exists a constant $C_a>0$, depending only on $a$, such that for every positive integer $n$, there is a prime number $p$ satisfying  $p \equiv b \mod a$, $p\nmid n,$ and $p \leq C_a\log n + C_a.$
	\end{lemma}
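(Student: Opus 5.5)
The plan is to reduce the statement to a bound on the product of the primes in the given residue class modulo $a$, and then apply the prime number theorem for arithmetic progressions, exactly as in the proof of Lemma \ref{prop:psubgeom}. Fix $a$ and $b$ coprime, and for $x>0$ set
\[
\theta(x;a,b)=\sum_{\substack{p\le x,\ p \text{ prime}\\ p\equiv b \bmod a}}\log p .
\]
By \cite{delavallepoussain1899Recherces} we have $\theta(x;a,b)\sim x/\varphi(a)$. Hence there are a constant $x_0$ and a constant $c>0$ with $\theta(x;a,b)\ge c\,x$ for all $x\ge x_0$. Note that there are only $\varphi(a)$ residue classes $b$ modulo $a$ coprime to $a$, so $x_0$ and $c$ can be chosen uniformly in $b$. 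Since only $b \bmod a$ matters, the final constant depends only on $a$.

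Now suppose, for contradiction, that every prime $p\equiv b\bmod a$ with $p\le x$ divides $n$. These primes are distinct, so their product divides $n$, which gives $\theta(x;a,b)\le \log n$. Combined with the lower bound, if $x\ge x_0$ we get $c\,x\le \log n$. So the choice
\[
x=\max\bigl(x_0,\ 2c^{-1}\log n\bigr)
\]
yields a contradiction whenever $\log n>0$. For $n=1$, any prime $p\equiv b\bmod a$ with $p\le x_0$ works; such a prime exists once $x_0$ is chosen large enough that $\theta(x_0;a,b)>0$. Consequently there is always a prime $p\equiv b \bmod a$ with $p\nmid n$ and
\[
p\le \max\bigl(x_0,\ 2c^{-1}\log n\bigr)\le C_a\log n+C_a,
\]
where $C_a=\max(x_0,2/c)$.

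The only delicate point is that the constant should depend on $a$ alone. This is handled by the uniformity over the finitely many admissible classes $b \bmod a$ noted above, together with the observation that replacing $b$ by its residue does not change the set of primes considered. Everything else is a direct counting argument, so I expect the invocation of the prime number theorem in progressions, to obtain the linear lower bound on $\theta(x;a,b)$, to be the only substantive step.
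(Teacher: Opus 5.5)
Your proof is correct. It is the derivation the paper has in mind: the paper gives no separate proof and simply presents the lemma as a reformulation of the prime number theorem for arithmetic progressions. Your argument makes this precise by comparing $\theta(x;a,b)\ge cx$ with $\theta(x;a,b)\le\log n$ and taking uniform constants over the $\varphi(a)$ admissible residue classes, which implicitly assumes $a\ge 1$, as the statement intends.
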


	\subsection{Nilpotent groups}
	Throughout the paper, we use the convention that the commutator is given by $[a,b]=aba\inv b\inv$. Let $G$ be a group then the \textbf{lower central series} is the inductively defined sequence of subgroups $\gamma_iG\normal G$ with $\gamma_1G=G$ and $\gamma_{i+1}G=[\gamma_iG,G]$. A group is called \textbf{$k$-step nilpotent} if $\gamma_{k+1}G=1$ but $\gamma_k G\neq 1$.
	More generally, a \textbf{central series} of a nilpotent group is a descending sequence of subgroups $G_i$ such that $G_1=G$, such that $G_{i+1}\supset [G_i,G]$ and such that $G_k=1$ for some $k \in \NN$.
	
	An important invariant when working with nilpotent groups, and more generally polycyclic groups, is the Hirsch length.
	\begin{definition}
		Let $G$ be a $k$-step nilpotent group with lower central series $\gamma_iG$. Then the \textbf{Hirsch length} $h(G)$ is given by the sum of the torsion-free ranks of $\frac{\gamma_iG}{\gamma_{i+1}G}$
	\end{definition}
	\begin{definition}
		Let $G$ be a group and let $H$ be a subgroup. The \textbf{isolator} $\sqrt[G] H$ is the set$$
		\sqrt[G] H=\set{x\in G\mid\exists n\in \NN:  x^n\in H}.
		$$
	\end{definition}
	\noindent In general, this set is not a subgroup, however when $G$ is nilpotent it always is.
	\begin{lemma}
		Let $G$ be a nilpotent group and let $H$ be a normal subgroup, then $\sqrt[G] H$ is a normal subgroup of $G$.
	\end{lemma}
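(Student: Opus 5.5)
The statement is the classical fact that in a nilpotent group the isolator of a normal subgroup is again a normal subgroup. I would prove it in two steps: first that $\sqrt[G]H$ is closed under products and inverses, then that it is invariant under conjugation.

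Closure under inverses and conjugation is immediate from the definition. If $x^n\in H$, then $(x^{-1})^n=(x^n)^{-1}\in H$. For any $g\in G$, $(gxg^{-1})^n=gx^ng^{-1}\in H$ because $H\normal G$. The identity lies in $\sqrt[G] H$ trivially. So $\sqrt[G]H$ is nonempty and normal as a set, and only closure under multiplication remains.

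For products, I would pass to the quotient $\bar G=G/H$, which is again nilpotent. The set $\sqrt[G]H$ is exactly the preimage of the set of torsion elements of $\bar G$. It therefore suffices to show that in a nilpotent group the torsion elements form a subgroup.

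This last step is the main obstacle. Here I would use the standard result that in a nilpotent group, any subgroup generated by finitely many torsion elements is finite. I would prove it by induction on the nilpotency class $k$ of $K=\langle x,y\rangle$, where $x,y$ are torsion.
\begin{itemize}
\item The abelianisation $K/[K,K]$ is generated by two torsion elements, so it is finite.
\item Consequently $[K,K]$ has finite index in $K$, so it is finitely generated.
\item Each quotient $\gamma_iK/\gamma_{i+1}K$ is generated by images of iterated commutators in $x,y$. Multilinearity of the commutator map $K/[K,K]\times\gamma_{i-1}K/\gamma_iK\to\gamma_iK/\gamma_{i+1}K$ shows these images have finite order bounded by the orders of the generators.
\item Hence every factor $\gamma_iK/\gamma_{i+1}K$ is a finitely generated abelian torsion group, thus finite, and so $K$ is finite.
\end{itemize}
In particular $xy$ is torsion. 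Applying this in $\bar G$ shows that if $x^n,y^m\in H$ then $(xy)^\ell\in H$ for some $\ell$, which completes the proof. Alternatively, since the lemma is classical, one could cite a standard reference on nilpotent groups for the fact that torsion elements of a nilpotent group form a subgroup.
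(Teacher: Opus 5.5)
Your proof is correct. The paper gives no proof of this lemma; it quotes it as a classical fact about nilpotent groups. Your argument therefore supplies a self-contained substitute for a citation rather than an alternative to an argument in the text, and it is the standard one. Normality of $H$ lets you identify $\sqrt[G]H$ with the preimage of the set of torsion elements of the nilpotent group $G/H$. That reduces everything to the fact that a nilpotent group generated by finitely many torsion elements is finite.

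This is exactly where normality of $H$ is used. For a non-normal subgroup of a nilpotent group the isolator is still a subgroup, but that needs a genuinely harder argument. The normal case is all the paper needs, since it only applies the lemma to the terms $\gamma_iG$ of the lower central series. Because you only work inside $K=\langle x,y\rangle$, your argument also covers nilpotent groups that are not finitely generated, matching the generality of the statement.

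Two small presentation points.
\begin{itemize}
\item You announce an induction on the nilpotency class but never use an inductive hypothesis. The argument runs directly along the lower central series of $K$. What it really shows is that $\gamma_iK/\gamma_{i+1}K$ is an epimorphic image of the $i$-fold tensor power of the finite group $K/[K,K]$.
\item The remark that $[K,K]$ is finitely generated is superfluous. Finite generation of each factor $\gamma_iK/\gamma_{i+1}K$ already follows from its being generated by the finitely many weight-$i$ commutators in $x$ and $y$.
\end{itemize}
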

	Let $G$ be a finitely generated nilpotent group with normal subgroup $H$, then $H$ is always finite index in its isolator$\sqrt[G]{H}$ we denote this index by $[\sqrt[G]{H}:H]=\det_G(H)$. We define the adaptive lower central series in terms of the lower central series as $$
	G_i=\sqrt[G]{\gamma_iG}.
	$$
	One shows that when $G$ is torsion-free, this is indeed a central series.
	
	We fix some notation for the Hirsch length of some important subgroups of $G$.
	\begin{notation}
		Let $G$ be a torsion-free finitely generated nilpotent group. As usual we denote by $h(G)$ the Hirsch length of $G$, we denote by $h_2(G)$ for $h(G_2)$, and $h_z(G)$ for $h(Z(G))$, the Hirsch length of the center of $G$.
	\end{notation}

	To end this section we consider what happens in the special case of $2$-step nilpotent groups.	
	A first special property is the following.
	\begin{lemma}
		\label{prop:comutIsMorphism}
		Let $G$ be $2$-step nilpotent and let $g\in G$, then $[.,g]$ and $[g,.]$ are both homomorphisms from $G$ to $[G,G]$.
	\end{lemma}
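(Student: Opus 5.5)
The statement to prove is \cref{prop:comutIsMorphism}: in a $2$-step nilpotent group, the maps $[\cdot,g]$ and $[g,\cdot]$ are homomorphisms $G\to[G,G]$. The plan is to use that $[G,G]\subseteq Z(G)$, which holds because $\gamma_3G=[[G,G],G]=1$, together with the standard commutator identities for the convention $[a,b]=aba^{-1}b^{-1}$.

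First I will settle the target and the centrality. The maps land in $[G,G]$ by definition. Since every commutator is central, conjugating a commutator by any element leaves it unchanged.

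Second, I will prove that $[g,\cdot]$ is a homomorphism. For $x,y\in G$,
\[
[g,xy]=gxyg^{-1}y^{-1}x^{-1} = gxg^{-1}\,(gyg^{-1}y^{-1})\,x^{-1} = gxg^{-1}[g,y]x^{-1}.
\]
Because $[g,y]$ is central, we can move it to the end to obtain $gxg^{-1}x^{-1}[g,y]=[g,x][g,y]$.

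Third, I will treat $[\cdot,g]$. Either a symmetric computation works, or I can use $[x,g]=[g,x]^{-1}$. In the latter case $x\mapsto[x,g]$ is the composition of the homomorphism $[g,\cdot]$ with inversion on the abelian group $[G,G]$. That group is abelian since it is central. Inversion on an abelian group is a homomorphism, so the composition is one too.

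The only point needing care is justifying centrality of commutators from $\gamma_3G=1$. Everything else is direct manipulation, so I expect no real obstacle.
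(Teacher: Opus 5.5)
Your proof is correct. The identity $[g,xy]=gxg^{-1}[g,y]x^{-1}$, together with $[G,G]\subseteq Z(G)$ (from $\gamma_3G=1$), gives $[g,xy]=[g,x][g,y]$, and $[x,g]=[g,x]^{-1}$ handles the other slot because inversion is a homomorphism of the abelian group $[G,G]$. The paper states this lemma without proof as a standard fact, and your computation is the standard verification one would supply.
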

	In case $G$ is also torsion-free, then the above morphism goes to a torsion-free abelian group and thus $G_2$ must lie in the kernel. It follows that $[.,.]$ induces a bilinear map $\frac{G}{G_2}\times\frac{G}{G_2} \to G_2$.
	
	One obvious consequence of this lemma is that the image of the commutator is a subgroup.
	\begin{corollary}
		Let $G$ be a $2$-step nilpotent group and let $a\in G$ be an arbitrary element. Then $[a,G]$, the set of commutators $\set{[a,g]\mid g\in G}$, is a central subgroup of $G$.
	\end{corollary}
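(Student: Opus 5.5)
The statement is the last one shown. For $a\in G$, the set $[a,G]=\{[a,g]\mid g\in G\}$ is a central subgroup of $G$. I would derive it directly from \Cref{prop:comutIsMorphism}.

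First, \Cref{prop:comutIsMorphism} says that $\varphi_a:=[a,\cdot\,]:G\to[G,G]$ is a group homomorphism. Its image is $\varphi_a(G)=\{[a,g]\mid g\in G\}=[a,G]$. The image of a group homomorphism is a subgroup of the codomain, so $[a,G]$ is a subgroup of $[G,G]$ and hence of $G$. Concretely, $[a,g][a,h]=[a,gh]$ and $[a,g]^{-1}=[a,g^{-1}]$, so the set is closed under products and inverses and contains $[a,e]=e$.

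Second, I would show centrality. Since $G$ is $2$-step nilpotent, $\gamma_3G=[[G,G],G]=1$. So every element of $[G,G]=\gamma_2G$ commutes with every element of $G$, which means $[G,G]\subseteq Z(G)$. As $[a,G]\subseteq[G,G]$, we get $[a,G]\subseteq Z(G)$. In particular $[a,G]$ is normal in $G$.

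There is no real obstacle here. The only point to note is that "subgroup" relies on the homomorphism property, which was established earlier, rather than on the set of commutators being closed in general (for arbitrary groups it is not). If one wants the homomorphism property from scratch, it follows from the identity $[a,gh]=[a,g]\,g[a,h]g^{-1}$ together with the centrality of commutators.
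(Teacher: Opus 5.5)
Your proof is correct and follows the same route as the paper. The paper treats this as an immediate consequence of \cref{prop:comutIsMorphism}: $[a,G]$ is the image of the homomorphism $[a,\cdot\,]$, hence a subgroup, and it is central because $[G,G]\subseteq Z(G)$ when $\gamma_3G=1$.
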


For $2$-step nilpotent groups, conjugacy admits a particularly simple description in terms of commutators.	
\begin{lemma}
		\label{prop:redefineconj}
		Let $G$ be a $2$-step nilpotent group and $a,a'\in G$. Then $a$ is conjugate to $a'$ if and only if $a'a\inv\in [a,G]$.
	\end{lemma}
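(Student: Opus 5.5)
Both directions of the equivalence come from unfolding the definition of conjugacy and using the bilinearity of the commutator in a $2$-step nilpotent group. By \Cref{prop:comutIsMorphism}, the map $[a,\cdot]\colon G\to[G,G]$ is a homomorphism. By the corollary following it, its image $[a,G]$ is a central subgroup of $G$. The plan is to rewrite $hah\inv$ in the form $[h,a]\,a$ and then read off the condition.

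First, fix $h\in G$. With the convention $[h,a]=hah\inv a\inv$ we get
\[
hah\inv = hah\inv a\inv\, a = [h,a]\,a .
\]
So $a'=hah\inv$ holds if and only if $a'a\inv=[h,a]$. It therefore remains to show that, as $h$ ranges over $G$, the elements $[h,a]$ range exactly over $[a,G]$.

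For this we use $[h,a]=[a,h]\inv$, which holds in any group since $([a,h])\inv=(aha\inv h\inv)\inv=hah\inv a\inv$. Because $[a,G]$ is a subgroup, it is closed under inverses, so $[h,a]\in[a,G]$. Conversely, any $[a,g]\in[a,G]$ equals $[g\inv,a]$: the map $[a,\cdot]$ is a homomorphism, so $[a,g]=[a,g\inv]\inv=[g\inv,a]$.

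Putting this together: if $a\sim a'$ via $h$, then $a'a\inv=[h,a]\in[a,G]$. If instead $a'a\inv=[a,g]$, set $h=g\inv$; then $a'a\inv=[h,a]$, and hence $hah\inv=a'$.

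There is no real obstacle here. The only point needing care is the commutator sign convention, and it is handled by the inverse identity together with the subgroup property of $[a,G]$.
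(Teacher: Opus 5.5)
Your argument is correct: writing $hah^{-1}=[h,a]\,a$ and using $[h,a]=[a,h]^{-1}=[a,h^{-1}]$ (the last step by \cref{prop:comutIsMorphism}) gives $\{[h,a]\mid h\in G\}=[a,G]$, which is exactly what the equivalence requires. The paper states this lemma without proof as a standard fact, and your proof is the routine verification it leaves implicit; the separate appeal to $[a,G]$ being a subgroup is harmless but unnecessary, since the single identity $[h,a]=[a,h^{-1}]$ already gives both inclusions.
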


	\subsection{Lie rings}
A central idea of this paper is to translate questions on
conjugacy separability growth of nilpotent groups into
questions about Lie rings.	\begin{definition}
		A Lie ring over a ring $R$ is an $R$-module $\mf g$, together with a bilinear map:$$
		[\_,\_]:\mf g\times \mf g\rightarrow \mf g
		$$
		such that $[x,x] = 0$ for all $x \in \mf g$ and $[x,[y,z]]+[y,[z,x]]+[z,[x,y]]=0$ for all $x,y,z\in\mf g$.
	\end{definition}
	Although this can be defined over general rings, in this paper it will only be used over the integers $\ZZ$ or $\ZZ/m\ZZ$.
	As can be seen from the definition, when the ring $R$ is a field, then we obtain a Lie algebra. In analogy to a Lie subalgebra, a \textbf{Lie subring} $\mf h$ of a Lie ring $\mf g$ is a submodule such that if $g_1,g_2\in\mf h$, then $[g_1,g_2]\in\mf h$.
	We call such a Lie subring an \textbf{ideal} if for $g_1\in\mf g$ and for $g_2\in\mf h$, then $[g_1,g_2]\in\mf h$. In this case we denote $\mf h\normal\mf g$.
	A \textbf{morphism of Lie rings}, is a morphism of modules that preserves the commutator.
	
	Similar to groups, we can take the isolator of a Lie subring. Let $\mf g$ be a Lie ring over an integral domain $R$ and let $\mf h$ be a Lie subring, then we define the \textbf{isolator} as $$
	\sqrt[\mf g]{\mf h}=\{h\in \mf g\mid \exists r\in R\backslash \{0\}: rh\in\mf h\}
	$$
	Note that this is just the radical of modules. It is immediate from bilinearity that $\sqrt[\mf g]{\mf h}$ is a submodule of $\mf g$. Furthermore, if $\mf h$ is an ideal, then one checks that $\sqrt[\mf g]{\mf h}$ is also an ideal of $\mf g$.
	We call a Lie subring $\mf h$ \textbf{isolated} if $\mf h=\sqrt{\mf h}$.
	
	When we call a set $S$ a \textbf{generating set} or a \textbf{basis} of $\mf g$, then this is considered for $\mf g$ as a $\mathbb{Z}$-module. When $R=\ZZ$, the following notation helps us in splitting a Lie ring in distinct modules.
	\begin{definition}
		Let $M$ be a finitely generated $\ZZ$ module, and $N$ an isolated submodule, then we say a submodule $N^\perp \subset M$ is \textbf{a complement} of $N$ if $N\oplus N^\perp=M$. If $N$ is not isolated then we call a module $N^\perp$ \textbf{a complement} of $N$ if it is a complement of $\sqrt{N}$.
	\end{definition}
	Complements do not depend on the bracket and are not unique.

	\begin{definition}
		Let $\mf g$ be a torsion-free finitely generated nilpotent Lie ring, then we denote $\gamma_i\mf g$ the \textbf{lower central series}, that is $\gamma_1\mf g=\mf g$, and $\gamma_i\mf g$ is defined inductively as $\gamma_{i+1}\mf g=[\gamma_i\mf g,\mf g]$. We then define the \textbf{adaptive lower central series}  by $\mf g_i=\sqrt[\mf g]{\gamma_i\mf g}$. A Lie ring $\mf g$ is said to be $k$-step \textbf{nilpotent} if $\gamma_{k+1} \mf g = 0$ but $\gamma_k \mf g \neq 0$.
	\end{definition}
\noindent The \textbf{center} $Z(\mf g)$ consists of the elements $g\in\mf g$ for which $[g,h]=0$ for all $h \in \mf g$. If $\mf g$ is nilpotent, then $Z(\mf g)$ is always non-trivial.
	
	The usual norms on Lie rings are not very compatible to the norms in the group theory setting, therefore we consider the Guivarch norm as introduced in \cite{guivarch1973croissance}.
	Let $\mf g$ be a nilpotent Lie ring of class $k$ over $\QQ$. Since $\mf g$ is finite-dimensional over $\QQ$, we may choose complements $\nu_i$ of $\mf g_{i+1}$ in $\mf g_i$. This yields a vector space decomposition $ \mf g=\displaystyle \bigoplus_{i=1}^{k}\nu_i $ satisfying $ \mf g_j=\displaystyle \bigoplus_{i=j}^{k}\nu_i$ for all $j$. Equip each $\nu_i$ with a fixed supremum norm $\|\cdot\|_i$ arising from a chosen basis. The \textbf{Guivarch norm} $\gnorm{\_}$ is defined as $$
	\gnorm{x}=\sup_{1 \leq i \leq k} \{\norm{x_i}_i^{\frac{1}{i}}\}
	$$
	where $x_i\in\nu_i$ is such that $x=\sum_{i=1}^k x_i$. This norm depends on the decomposition of $\mf g$ and on the chosen basis for the summands $\nu_i$, but this dependence is only up to a constant as in \cref{prop:normAreEquiv}.
	This norm decreases under morphisms, that is given a morphism $\varphi:\mf g\rightarrow \mf h$, there exists some constant $C>0$ such that $\gnorm{\varphi(x)}\leq C\gnorm{x}$ for all $x\in \mf g$.
	If $\mf g$ is a finitely generated torsion-free nilpotent Lie ring of class $k$, then $\mf g$ embeds into $\QQ\otimes \mf g$. Through this embedding, we can pull back the Guivarch norm to $\mf g$.
		
	\subsection{Separability in modules}
	
The Lie ring arguments developed later ultimately reduce to separating an element from a submodule in a finitely generated $\ZZ$-module, without taking the Lie bracket into account. We therefore collect several elementary estimates that will be used later. Let $M$ be a finitely generated free $\ZZ$-module with basis $S$. For a submodule $V \subset M$, the norm $\Vert V \Vert_S$ is defined as \[ \|V\|_S = \min\left\{ \max_{x\in\mathcal V}\|x\|_S \;\middle|\; \mathcal V \text{ is a finite generating set of } V \right\}. \]
	\begin{lemma}[\cite{dere2019effective} Proposition 4.2]\label{prop:SubspaceNormToIndex}
		Let $M$ be a finitely generated $\ZZ$-module of rank $d$ with basis $S$ and $ V\subset M$ be a submodule with $\Vert V \Vert_S \leq n$. The determinant of $V$ is bounded above by$$
		\det(V) \leq Cn^d
		$$ for some uniform constant $C$.
	\end{lemma}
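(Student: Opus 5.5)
Since $V$ need not be isolated, here $\det(V)$ is read as the index $[\sqrt{V}:V]$; this is the quantity that later governs the size of quotients separating an element from $V$. The plan is to reduce to a Hadamard-type estimate.

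Fix a finite generating set $\mathcal V=\{v_1,\dots,v_r\}$ of $V$ with $\|v_j\|_S\le n$ for all $j$, and let $e=\operatorname{rank}(V)\le d$. Writing the $v_j$ as the columns of an integer $d\times r$ matrix $A$ in the basis $S$, every entry of $A$ has absolute value at most $\|v_j\|_S\le n$ (up to a constant depending only on how the norm $\|\cdot\|_S$ relates to coordinates, which is bounded since $S$ is a basis). By Smith normal form, $\sqrt{V}/V$ is the torsion part of $M/V$. Its order equals the product of the nonzero invariant factors of $A$, which is $\Delta_e(A)$, the gcd of all $e\times e$ minors of $A$.

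It then suffices to show that some $e\times e$ minor is nonzero and bounded by $Cn^e$, since $\Delta_e(A)$ divides every such minor. Since $A$ has rank $e$, some $e\times e$ minor is nonzero. By Hadamard's inequality (or just the Leibniz expansion), any $e\times e$ minor has absolute value at most $e!\,n^e$, or more sharply $(\sqrt{e}\,n)^e$. Hence
$$[\sqrt V:V]=\Delta_e(A)\le e!\,n^e\le d!\,\max(1,n)^d,$$
which gives the claim with $C=d!$, adjusted by the constant comparing $\|\cdot\|_S$ with coordinate sup-norms. The constant is uniform: it depends only on $d$ and $S$, not on $V$.

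The main obstacle is only the bookkeeping in identifying the index of $V$ in its isolator with $\Delta_e(A)$. This requires the invariant-factor description: choose bases so that $A=P\,\mathrm{diag}(d_1,\dots,d_e,0,\dots)\,Q$ with $P,Q$ unimodular. Then $\sqrt V/V\cong\bigoplus_i\ZZ/d_i\ZZ$, and $\prod_i d_i=\Delta_e(A)$ because the determinantal divisors are invariant under unimodular change of basis.
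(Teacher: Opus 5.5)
The paper gives no proof of this lemma: it is imported from \cite{dere2019effective} and used as a black box, so there is no internal argument to compare yours against. Your proof is correct and complete on its own terms.

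Your reading of $\det(V)$ as $[\sqrt{V}:V]$ matches the paper's definition $\det_G(H)=[\sqrt[G]{H}:H]$. It also matches the later use $[M:V+V^\perp]=\det(V)$ in the proof of \cref{prop:AbelianSubgroupSeparability}.

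Since $S$ is a basis of a free abelian group, $\|\cdot\|_S$ is exactly the $\ell^1$-norm of coordinates. So the entries of $A$ are bounded by $n$ with no extra constant.

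The remaining steps are all sound: the torsion of $M/V$ is $\sqrt{V}/V$; its order is the product of the invariant factors, which equals $\Delta_e(A)$ by Smith normal form and the invariance of determinantal divisors under unimodular multiplication; and $\Delta_e(A)$ is at most the absolute value of any nonzero $e\times e$ minor, hence at most $e!\,n^e$.

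The argument actually gives the slightly sharper bound $\det(V)\le e!\,n^{e}$ with $e=\mathrm{rank}(V)$, and an explicit constant depending only on $d$. This is more than the lemma requires.

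If you prefer to avoid determinantal divisors, there is an equivalent route. Pick $e$ linearly independent generators spanning a sublattice $W\subseteq V$. Then $[\sqrt{V}:V]\le[\sqrt{V}:W]=\mathrm{covol}(W)/\mathrm{covol}(\sqrt{V})\le\mathrm{covol}(W)$, since the Gram determinant of an integral basis of $\sqrt{V}$ is a positive integer. Finally, $\mathrm{covol}(W)\le(\sqrt{d}\,n)^e$ by Hadamard's inequality.
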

		\begin{definition}
		Let $M$ be a finitely generated $ \ZZ$-module, and $x\in M$ an element that is not torsion, then the \textbf{index} $\mathrm{Ind}(x)$ is the largest integer $l$ such that there exists some $x_0$ in $M$ satisfying $lx_0=x$.
	\end{definition}
	\begin{corollary}\label{prop:indexInQuotient}
		Let $d$ be a positive integer and $V \subset \ZZ^d$ be a submodule with $\Vert V \Vert_S \leq n$, where $S$ is the standard basis. There exists a constant $C$ such that for all $c\in \ZZ^d\setminus\sqrt{V}$ of  norm at most $n$, the index of $c$ in $\ZZ^d/V$ is bounded above by $C n^d$.
	\end{corollary}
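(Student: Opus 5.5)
The plan is to reduce \Cref{prop:indexInQuotient} to \Cref{prop:SubspaceNormToIndex} by passing through the isolator $\sqrt{V}$. Write $W=\sqrt{V}$. The quotient $\ZZ^d/W$ is free of rank $d-r$, where $r=\operatorname{rank} V$, and $W/V$ is the finite torsion subgroup of $\ZZ^d/V$, of order $\det(V)\le Cn^d$ by \Cref{prop:SubspaceNormToIndex}. Let $\bar c$ be the image of $c$ in $\ZZ^d/V$. Since $c\notin W$, $\bar c$ is not torsion, so $\mathrm{Ind}(\bar c)$ is defined. Suppose $\bar c=l\bar x_0$. Projecting further to $\ZZ^d/W$ gives $c+W=l(x_0+W)$, so $l$ divides the index of $c+W$ in the free module $\ZZ^d/W$. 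The idea is to bound that free-module index by something like $\|c\|$.

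For this, I will use that $W$ is isolated, so $\ZZ^d=W\oplus W^\perp$ for some complement $W^\perp$, and $\ZZ^d/W\cong W^\perp$. The index of $c+W$ is then the gcd of the coordinates of the $W^\perp$-component of $c$ in a basis of $W^\perp$. The catch is that this component's norm depends on the choice of complement, and the complement must not depend on $c$. However, $W$ itself depends on $V$, so the constants must be uniform in $V$. This uniformity is the main obstacle.

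To avoid it, I will use a cleaner argument. If $c+W=l(y+W)$, then $c-ly\in W$. Choose a primitive vector $\phi\in\operatorname{Hom}(\ZZ^d,\ZZ)$ vanishing on $W$ with $\phi(c)\neq0$. Such a $\phi$ exists since $c\notin W$, and $W$ is the intersection of kernels of integral functionals vanishing on $V$. Then $l$ divides $\phi(c)$, so $l\le|\phi(c)|\le\|\phi\|_1\|c\|$. The functionals vanishing on $V$ form the integer kernel of a matrix whose columns are generators of $V$ of norm at most $n$. By Siegel's lemma, or by Cramer's rule on a maximal nonsingular minor, this kernel has a basis of vectors with entries bounded by $(C n)^{r}\le C n^{d-1}$. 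Since $c\notin W$, at least one of these basis functionals $\phi$ satisfies $\phi(c)\ne 0$, which gives $|\phi(c)|\le d\cdot Cn^{d-1}\cdot n\le C'n^d$.

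Combining the two steps, any $l$ with $\bar c\in l(\ZZ^d/V)$ satisfies $l\le C'n^d$. Hence $\mathrm{Ind}(\bar c)\le C'n^d$, with $C'$ depending only on $d$. The only delicate point is the Cramer-rule bound on the kernel basis. It follows because the entries are $r\times r$ minors of a matrix with entries at most $n$, giving the bound $r!\,n^r$. If $r=d$ there is nothing to prove, since then $W=\ZZ^d$. Torsion in $\ZZ^d/V$ plays no role, because the functional $\phi$ kills it.
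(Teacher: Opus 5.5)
Your proof is correct, but it takes a different route from the paper's. The paper argues on the primal side. It sets $U=V+\langle c\rangle$ and notes that if $\bar c=l\bar x_0$ in $\ZZ^d/V$, then $x_0+U$ is a torsion element of $\ZZ^d/U$ of order exactly $l$ (exactly, because $\bar x_0$ is non-torsion). Hence $l\le[\sqrt U:U]=\det(U)$, and it then applies \Cref{prop:SubspaceNormToIndex} to $U$, whose generators still have norm at most $n$. You instead dualize: you construct an integral functional $\phi$ that annihilates $V$ (hence $\sqrt V$ and all torsion of $\ZZ^d/V$), satisfies $\phi(c)\neq 0$, and has entries bounded by $r!\,n^r$, and then read off $l\mid\phi(c)$. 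The paper's argument is a two-line reduction to a lemma it already cites. Yours is self-contained, since it essentially reproves the needed determinant estimate via $r\times r$ minors, and it exposes the slightly sharper bound $l\le r!\,n^{r+1}$ with $r=\operatorname{rank}V\le d-1$. One point of wording: the Cramer-rule vectors $\phi_j$ (one for each row $j$ outside a fixed nonsingular $r\times r$ minor) need not form a $\ZZ$-basis of the integral annihilator of $V$. They only form a $\QQ$-basis of the rational annihilator of $\QQ V$. That is all you use, because $c\notin\sqrt V=\QQ V\cap\ZZ^d$ already forces $\phi_j(c)\neq 0$ for some $j$. So say ``$\QQ$-basis'' there, and drop ``primitive'', which plays no role. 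Your opening remark that $|W/V|=\det(V)$ is likewise never used.
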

	\begin{proof}
		Write $U = V + \langle c \rangle$. The index of $c$ in $\frac{\ZZ^d}{V}$ is bounded by the order of the torsion subgroup of $\frac{\ZZ^d}{U}$, and thus by the index of $U$ in its isolator. Lemma \ref{prop:SubspaceNormToIndex} implies that $[\sqrt{U}:U] = \det(U) \leq Cn^d$ for some constant $C$ depending only on $d$. 
	\end{proof}
	
	This leads to the following separability result.
	\begin{lemma}\label{prop:AbelianSubgroupSeparability}
    Let $M$ be a finitely generated free $\ZZ$-module of rank $d$ with basis $S$. Take $m \geq n >0$ and $V \subset M$ with $\|V\|_S\le n$ and $c\in M\setminus V$ with $\|c\|_S\le m$. Then there exists a constant $C$, depending only on $d$, and a finite quotient $\varphi:M\to Q$ such that $\varphi(c)\notin\varphi(V)$ and $\varphi(V)=0$. Moreover: \begin{itemize} \item if $c\in\sqrt{V}$, then $|Q|\le \det(V) \leq  Cn^d$, and \item if $c\notin\sqrt{V}$, then $|Q|\le C\log(m) + C$. \end{itemize}
	\end{lemma}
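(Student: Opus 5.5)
We split into the two cases of the statement; in both we take $Q$ to be a quotient of $M/V$.

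\emph{Case $c\in\sqrt{V}$.} Here we take $Q=\sqrt{V}/V$ when $\sqrt V$ is a direct summand. Since $\sqrt{V}$ is isolated, $M/\sqrt{V}$ is torsion-free, hence free, so $M=\sqrt{V}\oplus\sqrt{V}^\perp$ for a complement $\sqrt V^\perp$. Let $\varphi:M\to \sqrt{V}/V$ be the projection onto $\sqrt V$ along $\sqrt V^\perp$, followed by the quotient map. Then $\varphi(V)=0$. Moreover $\varphi(c)$ is the class of $c$ itself, since $c\in\sqrt V$, and this class is nonzero because $c\notin V$. Finally $|Q|=[\sqrt V:V]=\det(V)$, which is at most $Cn^d$ by Lemma \ref{prop:SubspaceNormToIndex}.

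\emph{Case $c\notin\sqrt V$.} We first pass to $M/\sqrt{V}\cong\ZZ^{d'}$ with $d'\le d$, which kills $V$, and then reduce modulo a small prime. The image $\bar c$ of $c$ in $M/\sqrt V$ is nonzero; write $\bar c=\ell\,\bar c_0$ with $\ell=\ind{\bar c}$ and $\bar c_0$ primitive. The main point is to bound $\ell$ polynomially in $m$.

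\begin{itemize}
\item We have $\ind{\bar c}$ in $M/\sqrt V$ at most the index of $c$ in $M/V$. This index is at most $C m^d$ by Corollary \ref{prop:indexInQuotient}, applied with $\max(n,m)=m$ as the common bound; the corollary is stated with a single $n$, but it extends since $\|V\|_S\le n\le m$.
\item By Lemma \ref{prop:pnondevider}, applied with $a=1$, there is a prime $p\nmid \ell$ with $p\le C\log(Cm^d)+C\le C'\log m+C'$.
\end{itemize}

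Let $\varphi:M\to M/\sqrt V\to (M/\sqrt V)/p(M/\sqrt V)$. Since $\bar c_0$ is primitive, it is nonzero modulo $p$, and so $\varphi(c)=\ell\varphi(\bar c_0)\neq 0$ because $p\nmid\ell$. We also have $\varphi(V)=0$. This gives the desired $\varphi(c)\notin\varphi(V)$.

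The step that needs care is the size of $Q$: in this construction $|Q|=p^{d'}$, not $p$, which is polynomial in $\log m$ rather than linear. To fix this, we project further onto a single coordinate. Choose a basis of $M/\sqrt V$ in which $\bar c_0$ is the first basis vector, which is possible since it is primitive, and map to the first coordinate modulo $p$. The target is then $\ZZ/p\ZZ$, still with $\varphi(V)=0$, and $\varphi(c)=\ell\bmod p\neq0$. Hence $|Q|=p\le C\log m+C$, with all constants depending only on $d$ since $S$ is a fixed basis of a rank-$d$ module.

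Expected obstacle: bounding the index $\ell$ uniformly when the norms of $c$ and $V$ differ ($m\ge n$). Corollary \ref{prop:indexInQuotient} handles this once both norms are bounded by $m$.
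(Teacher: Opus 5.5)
Your strategy is the paper's. For $c\in\sqrt V$ you quotient by $V+\sqrt V^\perp$. For $c\notin\sqrt V$ you pass to $M/\sqrt V$, reduce modulo a small prime $p\nmid\ell$, and project to one cyclic factor. Choosing a basis with $\bar c_0$ as first vector is a clean way to make that last projection precise. However, the step that bounds $\ell$ is wrong as written.

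You claim that $\ind{\bar c}$, computed in $M/\sqrt V$, is at most the index of $c$ in $M/V$. The inequality actually goes the other way: if $c\equiv \ell' x_0 \bmod V$, then $\bar c=\ell'\bar x_0$. Here is a counterexample to your direction. Take $M=\ZZ^2$, $V=\langle De_2\rangle$ and $c=De_1+e_2$. Then $M/V\cong\ZZ\oplus\ZZ/D\ZZ$ and $c\mapsto(D,1)$, which has index $1$, since $\ell'\mid D$ and $\ell' b\equiv 1 \bmod D$ force $\ell'=1$. On the other hand, $\bar c=D\bar e_1$ has index $D$ in $M/\sqrt V\cong\ZZ$. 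So applying Corollary~\ref{prop:indexInQuotient} to $V$ does not control $\ell$, even with both norms bounded by $m$.

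The bound $\ell\le Cm^d$ is still true. To prove it, rerun the corollary's argument with $\sqrt V$ in place of $V$:
\begin{itemize}
\item Put $U=V+\langle c\rangle$ and $U'=\sqrt V+\langle c\rangle$, and let $c_0\in M$ lift $\bar c_0$.
\item Then $\ell c_0\in c+\sqrt V\subseteq U'$.
\item If $jc_0=ac+w$ with $w\in\sqrt V$, then in the torsion-free module $M/\sqrt V$ we get $j\bar c_0=a\ell\bar c_0$, so $\ell\mid j$.
\item Hence $c_0+U'$ has order exactly $\ell$ in the torsion subgroup $\sqrt{U'}/U'$ of $M/U'$.
\item Since $U\subseteq U'\subseteq\sqrt U$, we have $\sqrt{U'}=\sqrt U$, so this group is a quotient of $\sqrt U/U$.
\item Therefore $\ell\le\det(U)\le Cm^d$ by Lemma~\ref{prop:SubspaceNormToIndex}, because $\|U\|_S\le\max(n,m)=m$.
\end{itemize}
With this replacement, the rest of your argument goes through unchanged: the choice of $p$, the reduction modulo $p$, and the projection to a single coordinate.

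The paper's own proof at this point just cites Corollary~\ref{prop:indexInQuotient} for the index of $\varphi_0(c)$ in $V^\perp\cong M/\sqrt V$. What justifies that citation is the adapted argument above, not a comparison with the index in $M/V$.
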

	\begin{proof}
		Let $V^\perp$ be a complement of $V$ in $M$.
		If $c \in \sqrt{V}$, then $c\notin V+V^\perp$. Furthermore, the index of $V+V^\perp$ in $M$ equals $\det(V)$. From \cref{prop:SubspaceNormToIndex} it follows that $\det(V)\leq C_1 n^d$ for some constant $C_1$. The result follows by taking $\varphi$ to be the quotient map with kernel $V+V^\perp$.
		
		For the second case assume that $c\notin \sqrt{V}$. Consider then the projection $\varphi_0:M\rightarrow V^\perp$, with as kernel $\sqrt{V}$. By \cref{prop:indexInQuotient} it follows that $\varphi_0(c)$ has index no larger than $C_2 m^{d}$. By \cref{prop:pnondevider}, it follows that there exists some prime number $p$ bounded by $$C_3\log(C_2 m^d) + C_3 = C_3 d \log(m) + C_3$$ for some constant $C_3$ such that $p$ does not divide the index of $\varphi_0(c)$ or thus such that $\varphi_0(c)\notin pV^\perp$. The result follows by taking $\varphi$ the quotient map with kernel $\sqrt{V}+pV^\perp$, composed with a projection on one of the cyclic components, and rewriting the constants.
	\end{proof}

		\section{General upper bound}\label{sec:AlgUpperBoundGeneral}
		\numberwithin{theorem}{subsection}
        In this section, we present the proof of Theorem \ref{prop:AlgUpperBound}.
	The main tool is the close correspondence between Lie rings and nilpotent groups as given by the Baker-Campbell-Hausdorff formula. By introducing strong ideals, conjugacy is related to the Lie bracket, giving the uniform upper bound.
	\subsection{Relation to Lie rings}
	We first introduce the following notation for nested commutators.
	\begin{definition}
Let $\mc I$ denote the set of all finite tuples $v=(v_1,\dots,v_n)$ of strictly positive integers. For $v\in\mc I$, we define its length by \[ l(v)=\sum_{i=1}^{n} v_i. \] For $k\in\NN$, let \[ \mc I_k=\{v\in\mc I\mid l(v)\le k\}. \]
	\end{definition}
	\begin{definition}
		Let $\mf g$ be a Lie ring and let $v = (v_1,\ldots,v_n)\in \mc I$. For every $a,b\in\mf g$, we define $[a,b]_v$ as$$
		[a,b]_v=[[\cdots[a,\underbrace{b],b],\cdots],b]}_{v_1},\underbrace{a],a],\cdots],a]}_{v_2},\underbrace{b],\cdots],b]}_{v_3},\cdots]
		$$
	\end{definition}
	\noindent A similar convention will be used for commutators whenever needed.
    
    As the backbone of our correspondence, we use the Baker-Campbell-Hausdorf-formula.
	The following is an amalgamation of facts that can be found in \cite[Chapter 6]{segal1983polycyclic}
	\begin{theorem}\label{prop:bch}
		There exist rational numbers $q_v$,$r_v\in\QQ$ such that for any torsion-free finitely generated nilpotent group $G$ of nilpotency class $k$, there exists a torsion-free finitely generated nilpotent Lie ring $\mf g$ such that for any $v\in\mc I_{k}$, both $q_v[\_,\_]$ and $r_v[\_,\_]$ are internal in $\mf g$, and such that there exists an injective map $\log:G\hookrightarrow\mf g$ satisfying the following:\begin{itemize}
			\item There exists a constant $m$ such that $m\mf g\subset\log(G)$,\\
			\item $\log(ab)=\log(a)*\log(b)$ where $x*y$ is given by$$
			x*y=x+y+\sum_{v \in \mc I_k } q_v[x,y]_v,
			$$\\
			\item $\log([a,b])=\{\log(a),\log(b)\}$ where $\{x,y\}$ is given by $$
			\{x,y\}=[x,y]+\sum_{(1) \neq v \in \mc I_k} r_v[x,y]_v
			$$
		\end{itemize}
	\end{theorem}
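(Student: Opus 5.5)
The plan is to realise $G$ inside a rational unipotent group and take $\mf g$ to be a suitable lattice in the associated rational Lie algebra. By the theorem of Hall and Mal'cev, $G$ embeds in $\mathrm{UT}_N(\ZZ)$ for some $N$. Let $L\subset \mathfrak{n}_N(\QQ)$ be the $\QQ$-span of $\log(G)$, where $\log(g)=\sum_{j\ge1}(-1)^{j+1}(g-1)^j/j$. This series is a finite sum because $g-1$ is nilpotent. By the classical Mal'cev correspondence \cite[Chapter 6]{segal1983polycyclic}:
\begin{itemize}
\item $L$ is a finite-dimensional nilpotent Lie algebra of class $k$;
\item $\log$ is injective on unipotent matrices, with inverse $\exp$;
\item $\log(ab)$ is given by the Baker--Campbell--Hausdorff series $\log(a)*\log(b)$.
\end{itemize}
Since every bracket of length greater than $k$ vanishes in $L$, the series truncates to a finite sum $x+y+\sum_{v\in\mc I_k}q_v[x,y]_v$ with universal rational coefficients $q_v$. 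The universal expansion does not depend on $G$, and the dependence on $k$ is only through the truncation. Applying the same reasoning to $\log(aba\inv b\inv)=\log(a)*\log(b)*(-\log(a))*(-\log(b))$ and re-expanding gives the formula $\{x,y\}$ with universal coefficients $r_v$; the leading term is $[x,y]$.

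Next I build the lattice $\mf g$. First I claim that $\log(G)$ is contained in a finitely generated subgroup $\Lambda$ of $L$. To see this, fix a Mal'cev basis $u_1,\dots,u_h$ of $G$. Every $g=u_1^{a_1}\cdots u_h^{a_h}$ then satisfies $\log g=(a_1\log u_1)*\cdots*(a_h\log u_h)$. Expanding, the coordinates of $\log g$ with respect to a fixed $\QQ$-basis of $L$ are polynomials in the $a_i$ with rational coefficients and bounded degree. Their values at integer points therefore have bounded denominators, which proves the claim. I then define $\mf g$ to be the additive subgroup generated by all iterated expressions obtained from elements of $\Lambda$ by repeatedly applying the finitely many bilinear operations $q_v[\_,\_]$, $r_v[\_,\_]$ and $[\_,\_]$.

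Since $L$ is $k$-step nilpotent, any nesting of more than $k$ brackets vanishes, so only finitely many bracket shapes contribute. Each shape is a multilinear map applied to the finitely generated group $\Lambda$, so its image generates a finitely generated group. Hence $\mf g$ is finitely generated, and it is torsion-free as a subgroup of a $\QQ$-vector space. By construction it is closed under every $q_v[\_,\_]$ and $r_v[\_,\_]$, hence a Lie ring of class $k$. It contains $\log(G)$, and the identities for $*$ and $\{\_,\_\}$ hold because they hold in $L$.

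It remains to find $m$ with $m\mf g\subset\log(G)$, and I expect this to be the main obstacle. My first attempt uses the inverse Mal'cev coordinates. Fix a $\ZZ$-basis $y_1,\dots,y_h$ of $\mf g$. For $x=\sum b_jy_j$, write $\exp(x)=u_1^{a_1}\cdots u_h^{a_h}$ in the Mal'cev basis, now allowing $a_i\in\QQ$ via the rational completion. The map sending $(b_j)$ to $(a_i)$ is polynomial with rational coefficients, of degree at most $k$, and has no constant term, since $\exp(0)=1$. If $D$ is a common denominator of all its coefficients, then substituting $b_j=Dc_j$ with $c_j\in\ZZ$ makes every monomial integral, because each monomial has degree at least $1$. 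Hence $\exp(D\mf g)\subset G$, and $m=D$ works. The delicate point is to justify that the coordinate map is globally polynomial on $L$. I would verify this by induction along the Mal'cev basis, peeling off $u_1^{a_1}$ using the truncated BCH formula. If that induction becomes cumbersome, the fallback is to quote \cite[Chapter 6]{segal1983polycyclic} for the existence of a finite-index subgroup whose image under $\log$ is a full lattice, and then rescale.
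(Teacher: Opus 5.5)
Your proposal is correct. The paper gives no argument for this statement; it only calls it an amalgamation of facts from \cite[Chapter 6]{segal1983polycyclic}. You instead reconstruct the Mal'cev correspondence behind that citation: Hall's embedding into $\mathrm{UT}_N(\ZZ)$, the matrix logarithm, an explicitly built lattice, and Mal'cev coordinates for the constant $m$.

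What your version adds is an explicit treatment of the requirement specific to this paper, that the finitely many operations $q_v[\_,\_]$ and $r_v[\_,\_]$ be internal, which is not among the standard facts as usually stated. You force it by closing a finitely generated subgroup containing $\log(G)$ under these operations. The result stays finitely generated because brackets with more than $k$ entries vanish, and your polynomial-coordinate argument then recovers $m$.

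Three points to add when you write it up. First, explain why the truncated series can be written through the specific brackets $[x,y]_v$: every Lie polynomial of degree at least $2$ in $x,y$ is a rational combination of left-normed brackets beginning with $[x,y]$. Since such expansions are not unique, fix one per homogeneous degree once and for all, so that the $q_v,r_v$ really are independent of $G$ and $k$.

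Second, state that the Mal'cev basis is adapted to a chain $G_i=\langle u_i,\dots,u_h\rangle$ with $G_{i+1}\normal G_i$ and $G_i/G_{i+1}\cong\ZZ$. This makes each $\QQ$-span of $\log u_{i+1},\dots,\log u_h$ an ideal of codimension one in the span of $\log u_i,\dots,\log u_h$, which is exactly what your peeling induction needs, so no fallback is required.

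Third, finite generation of $\Lambda$ is immediate without Mal'cev coordinates. Since $(g-1)^N=0$ on $\mathrm{UT}_N(\ZZ)$, one has $\log(G)\subseteq\frac{1}{(N-1)!}\mathfrak{n}_N(\ZZ)$.
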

	One can furthermore show that $(\mf g,*)$ is a group and, as the map $\log$ is injective, we may see $G$ as a subgroup of $(\mf g,*)$. To avoid confusion, henceforth we denote the group commutator, both in $G$ and $(\mf g,*)$ with $\{\_,\_\}$.
	
	Just like with iterated Lie brackets, we might take iterated commutators. 
	\begin{lemma}\label{prop:bchRepeated}
		There exist constants $\tilde{r}_{v,\tilde v}\in \QQ$ for all $v\in \mc I$ such that for all $x,y\in \mf g$, $$
		\{x,y\}_v=[x,y]_v+\sum_{\tilde v \in \mc I_{k} \setminus  \mc I_{l(v)}} \tilde{r}_{v,\tilde v}[x,y]_{\tilde v}.
		$$
	
	\end{lemma}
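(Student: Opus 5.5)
We prove the lemma by induction on the number of entries $n$ of $v=(v_1,\dots,v_n)$, and more finely on the number of brackets that still have to be taken. Throughout we use that $\mf g$ is $k$-step nilpotent, so every bracket expression of total degree larger than $k$ vanishes. This is what makes all sums finite and lets us restrict to $\tilde v\in\mc I_k$. The key structural fact is that the error terms in $\{x,y\}$ from \cref{prop:bch} are brackets $[x,y]_w$ with $l(w)\ge 2$, i.e. of strictly higher degree in $(x,y)$ than the leading term. We should note that the precise statement needs a mild strengthening in order to be proven by induction: the correction terms are linear combinations of iterated brackets of length $>l(v)$ in the letters $x,y$, and one must argue that each such bracket can be rewritten in the form $[x,y]_{\tilde v}$.

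First we handle a single step. Suppose $z$ is any element of the form $z=[x,y]_u+\sum_{l(w)>l(u)} c_w [x,y]_w$, and let $t\in\{x,y\}$. Apply the formula $\{z,t\}=[z,t]+\sum_{(1)\ne w'} r_{w'}[z,t]_{w'}$ and expand by bilinearity. The main term $[[x,y]_u,t]$ is exactly $[x,y]_{u'}$, where $u'$ is $u$ with its last entry increased by one, or with a new entry $1$ appended when $t$ alternates. Every other summand is an iterated bracket in $x$ and $y$ of total degree $\ge l(u)+2$, so it has higher length.

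The step I expect to be the main obstacle is that a general iterated bracket such as $[[x,y]_{w},[x,y]_{w'}]$, or $[z,t]_{w'}$ with $z$ itself a bracket, is not obviously left-normed of the form $[x,y]_{\tilde v}$. To handle this I would use the standard fact, which follows from the Jacobi identity and induction on degree, that every Lie monomial in $x,y$ is a $\ZZ$-linear combination of left-normed monomials $[[\cdots[a_1,a_2],\dots],a_m]$ with $a_i\in\{x,y\}$. Using antisymmetry, each left-normed monomial either vanishes or equals $\pm[x,y]_{\tilde v}$ for a suitable $\tilde v$ with $l(\tilde v)=m-1$. Here the convention is that $l$ counts the letters after the initial $x$, so we have to check that the bookkeeping of $l$ matches degree minus one. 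This rewriting preserves the degree, and hence preserves the strict inequality $l(\tilde v)>l(v)$.

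Combining the single-step computation with this rewriting, the induction passes from $u$ to $u'$ while keeping the correction terms in degrees $>l(u')$. Iterating over all $l(v)$ brackets gives the claimed expansion with rational constants $\tilde r_{v,\tilde v}$. These constants are universal, since they are built only from the $r_w$ and integer Jacobi coefficients. Terms with $l(\tilde v)>k$ vanish by nilpotency, which gives the restriction $\tilde v\in\mc I_k\setminus\mc I_{l(v)}$.
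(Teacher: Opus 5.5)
Your proof is correct and is essentially the paper's argument: the paper's proof is just "repeatedly substitute the formula for $\{\_,\_\}$ from \cref{prop:bch} into itself". Your use of the Jacobi identity to rewrite every bracket monomial in $x,y$ as a $\ZZ$-combination of left-normed ones, each equal to $0$ or $\pm[x,y]_{\tilde v}$, supplies a step the paper leaves implicit. One small fix to your bookkeeping: with degree counted as the number of letters, every correction term produced in passing from $u$ to $u'$ has degree at least $l(u)+3$ (not merely $l(u)+2$, which is the degree of the main term), i.e.\ $l(\tilde v)\ge l(u)+2>l(u')$, and this is what gives the strict inequality.
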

	\begin{proof}
		This follows from repeatedly substituting $\{\_,\_\}$ into itself.
	\end{proof}
	\begin{corollary}\label{prop:bchGroupCommutator}
		There exist constants $s_v\in \QQ$ for all $v\in \mc I$ such that for all $x,y\in \mf g$:$$
		\{x,y\}=[x,y]+\sum_{(1) \neq v \in \mc I_k} s_v\{x,y\}_v
		$$
	\end{corollary}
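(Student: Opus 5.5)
We prove the corollary by inverting the triangular relation of \cref{prop:bchRepeated}, using descending induction on the length $l(v)$. Fix $x,y\in\mf g$. The key observation is that every bracket $[x,y]_{\tilde v}$ with $l(\tilde v)>k$ vanishes, because $\mf g$ is $k$-step nilpotent and such a bracket lies in $\gamma_{l(\tilde v)+1}\mf g$. (Here we use the convention, consistent with \cref{prop:bch}, that $[x,y]_v$ involves $l(v)+1$ entries, counting the leading $x$.) Hence in \cref{prop:bchRepeated} only finitely many terms occur, and each is of strictly greater length than $v$.

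The claim we prove is the following: for every $v\in\mc I_k$ there are rationals $t_{v,w}$ such that
\[
[x,y]_v=\{x,y\}_v+\sum_{w\in\mc I_k,\ l(w)>l(v)} t_{v,w}\{x,y\}_w .
\]
The induction runs downward on $l(v)$, starting at $l(v)=k$.

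\textbf{Base case.} When $l(v)=k$, \cref{prop:bchRepeated} gives $\{x,y\}_v=[x,y]_v$, since the error sum ranges over tuples of length greater than $k$ and is therefore empty.

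\textbf{Inductive step.} Take $l(v)<k$ and assume the claim for all tuples of greater length. By \cref{prop:bchRepeated},
\[
[x,y]_v=\{x,y\}_v-\sum_{l(\tilde v)>l(v)}\tilde r_{v,\tilde v}[x,y]_{\tilde v}.
\]
Substitute the inductive expression for each $[x,y]_{\tilde v}$. Every tuple that appears then has length at least $l(\tilde v)>l(v)$. Collecting the coefficients gives the $t_{v,w}$, and these are rationals that depend only on $v$, not on $x$, $y$ or $\mf g$. Finiteness is automatic, since $\mc I_k$ is finite.

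\textbf{Conclusion.} Apply the claim to $[x,y]$ itself. Note that $[x,y]=[x,y]_{(1)}$, and $\{x,y\}_{(1)}=\{x,y\}$. This gives
\[
[x,y]=\{x,y\}+\sum_{(1)\ne v\in\mc I_k}t_{(1),v}\{x,y\}_v .
\]
The statement as printed reads $\{x,y\}=[x,y]+\sum s_v\{x,y\}_v$. Rearranging, this is the same identity with $s_v=-t_{(1),v}$.

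\textbf{Main obstacle.} The main point needing care is the bookkeeping of lengths. We must be sure that the iterated substitution in \cref{prop:bchRepeated} really only produces tuples of strictly larger length, so that the triangular system can be inverted. We also need the terms beyond length $k$ to vanish, which is where nilpotency of class $k$ is used. Everything else is formal.
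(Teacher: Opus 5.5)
Your proof is correct and follows the paper's approach. The paper obtains the identity by substituting the expressions of \cref{prop:bchRepeated} into the BCH formula of \cref{prop:bch}. Your descending induction on $l(v)$ is the explicit inversion of that unitriangular system, with the case $v=(1)$ of \cref{prop:bchRepeated} playing the role of \cref{prop:bch}.
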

	\begin{proof}
		This follows from substituting the expressions from \cref{prop:bchRepeated} into \cref{prop:bch}.
	\end{proof}
	
	In what follows we fix a torsion-free finitely generated nilpotent group $G$, and corresponding Lie ring $\mf g$. 
	Strong ideals help to translate notions of Lie rings into nilpotent groups.
	\begin{definition}
		Take notations as in \Cref{prop:bch}, then we call an ideal $I\normal \mf g$ a \textbf{strong ideal} if $I\subset G$ and if for any $v\in\mc I_{k}$, for any $x\in I$ and $y\in \mf g$, we have that $q_v[x,y],r_v[x,y]\in I$.
	\end{definition}
Note that if $I$ is a strong ideal, then automatically also the weaker condition $q_v[x,y]_v\in I$ and $r_v[x,y]_v\in I$ is satisfied for all $x \in I$ and $y \in \mf g$, which is in fact the only condition needed outside of \cref{prop:ConjTranslation}. However, it is easier to show that ideals satisfy the condition on the Lie bracket than on nested Lie brackets in Lemmata \ref{prop:IdealContainsStrong} and \ref{prop:SubgroupContainsStrong}.
    
	\begin{example}\label{eg:strongIdeal}
		Take notations as in \Cref{prop:bch}, then $m\mf g$ is a strong ideal.
		Indeed $m\mf g\subset G$ is by choice of $m$ and $m\mf g$ is clearly an ideal of $\mf g$. Furthermore, for $x, y\in\mf g$ we have $q_v[mx,y]=m(q_v[x,y])\in m\mf g$ and similarly, for the term with $r_v$
	\end{example}
	The advantage of working with strong ideals, is that they behave nicely, both as ideals in $(\mf g,+)$, and as subgroups in $(G,*)$. This is made concrete in the following lemmas.
	\begin{lemma}\label{prop:StrongIdealIsNormal}
		Take notations as in \Cref{prop:bch} and let $I$ be a strong ideal, then $I$ is a normal subgroup of $G$.
	\end{lemma}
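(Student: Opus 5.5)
We need to show that $I$ is a subgroup of $(G,*)$ and that it is closed under conjugation by $G$.

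\emph{Closure under products.} Let $x,y\in I$. By Theorem~\ref{prop:bch},
\[
x*y=x+y+\sum_{v\in\mc I_k}q_v[x,y]_v .
\]
Here $x+y\in I$ because $I$ is an additive subgroup. For each $v\in\mc I_k$ the outermost bracket of $[x,y]_v$ has the form $[z,w]$, where $z$ is a nested bracket built from $x$ and $y$ and $w\in\set{x,y}$. We have $z\in I$: if $v=(v_1)$ then $z$ is $x$ or an iterated bracket starting with $x$, and otherwise $z$ is a bracket with an inner entry in $I$, so it lies in $I$ because $I$ is an ideal. The strong ideal condition then gives $q_v[z,w]\in I$. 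Hence every summand lies in $I$, and so $x*y\in I$.

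\emph{Inverses and identity.} In $(\mf g,*)$ the inverse of $x$ is $-x$, since $[x,-x]_v=0$ for all $v$. As $I$ is an additive subgroup, $-x\in I$, and $0\in I$. Combined with $I\subset G$, this makes $I$ a subgroup of $G$.

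\emph{Normality.} Let $x\in I$ and $g\in G$. Then $g*x*g\inv=\{g,x\}*x$, so by the closure just proved it suffices to show that $\{g,x\}\in I$. By Theorem~\ref{prop:bch},
\[
\{g,x\}=[g,x]+\sum_{(1)\neq v\in\mc I_k}r_v[g,x]_v .
\]
First, $[g,x]=-[x,g]\in I$ since $I$ is an ideal. For $v\neq(1)$, the term $[g,x]_v$ is a bracket $[z,w]$ with $w\in\set{g,x}$ and $z$ an iterated bracket containing $[g,x]$ innermost, so $z\in I$. The strong ideal condition (taking the element of $I$ in the first slot, and using antisymmetry if needed) gives $r_v[z,w]\in I$. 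Thus $\{g,x\}\in I$, and therefore $I\normal G$.

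The main point needing care is this last step. The rational coefficients $r_v$ and $q_v$ must be absorbed through only the \emph{outermost} bracket, and the inner part must be shown to lie in $I$ by the ideal property alone. This is exactly what the strong ideal definition provides.
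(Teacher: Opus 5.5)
Your proof is correct and follows the same route as the paper. The paper's one-line proof is exactly your normality step: the BCH formula for $\{g,x\}$, combined with the strong-ideal condition, puts $\{g,x\}$ in $I$ whenever $x\in I$. You also spell out closure of $I$ under $*$ and inverses, which the paper leaves implicit but which is needed to conclude $g*x*g\inv=\{g,x\}*x\in I$.
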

	\begin{proof}
	This follows immediately from \cref{prop:bch}, as $\{x,y\} \in I$ for all $x \in \mf g, y \in I$.
	\end{proof}
	\begin{lemma}\label{prop:CosetsEquivalent}
		Take notations as in \Cref{prop:bch} and let $I$ be a strong ideal, for any $c\in G$ the subsets $c+I = c*I$ are equal.
	\end{lemma}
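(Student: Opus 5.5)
We prove the two inclusions $c*I\subseteq c+I$ and $c+I\subseteq c*I$, both read off from the Baker--Campbell--Hausdorff expression in \cref{prop:bch}. Fix $c\in G$ and $x\in I$. Then
\[
c*x=c+x+\sum_{v\in\mc I_k} q_v[c,x]_v .
\]
Each nested bracket $[c,x]_v$ with $v\in\mc I_k$ contains $x$ at least once. Since $[c,x]_v$ is a bracket of length $l(v)\ge 2$ involving both $c$ and $x$, we would like $q_v[c,x]_v\in I$.

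The statement noted after the definition of strong ideals gives $q_v[x,y]_v\in I$ for $x\in I$, $y\in\mf g$. Our bracket has the roles of the arguments possibly swapped, so we argue directly instead. By antisymmetry and bilinearity, the innermost bracket of $[c,x]_v$ is $\pm[x,c]$. Applying the strong ideal property with $y=c$ (or with $x$ in the other slot) gives $q_v[x,c]\in I$ up to sign. The subsequent brackets with $c$ or $x$ keep us inside $I$, because $I$ is an ideal of $\mf g$ and $q_v$ is just a scalar multiple that can be pulled outward: $q_v[[x,c],z]=[q_v[x,c],z]$, where $q_v[x,c]\in I$. Hence every correction term lies in $I$, and since $I$ is an additive subgroup, $c*x-c\in I$. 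This gives $c*I\subseteq c+I$.

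For the reverse inclusion, given $c+y$ with $y\in I$, we must find $x\in I$ with $c*x=c+y$. Here we use the group structure. $I$ is a normal subgroup of $G$ by \cref{prop:StrongIdealIsNormal}, and $c\in G$, so $x:=c^{-1}*(c+y)$ is well defined in the group $(\mf g,*)$; note that $c^{-1}=-c$ in BCH. It remains to show $x\in I$. We compute
\[
x=(-c)*(c+y)=-c+c+y+\sum_v q_v[-c,c+y]_v=y+\sum_v q_v[-c,c+y]_v .
\]
Bilinearity and $[c,c]=0$ reduce every term so that it contains $y$ in its innermost bracket. More precisely, $[-c,c+y]=-[c,y]$. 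Higher nested brackets of $[-c,c+y]_v$ expand multilinearly into brackets each containing at least one $y$, and terms containing $y$ lie in $I$ by the same argument as above, since the first bracket involving $y$ already lands in $I$ after multiplying by $q_v$. A slightly cleaner way to organise this is to note that modulo the ideal $I$ the whole expression is $-c+c=0$, provided the coefficients $q_v$ do not leave $I$. Thus $x\in I$ and $c+y=c*x\in c*I$.

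The main obstacle is the coefficient bookkeeping. The strong ideal condition controls $q_v[x,y]$ only for a single bracket with $x\in I$. We must check that in a multilinear expansion of $q_v[a,b]_v$ containing an $I$-entry, the scalar $q_v$ can always be attached to the first bracket in which an $I$-element appears, and that this bracket has the form $[\text{elt of }I,\text{elt of }\mf g]$ up to sign. For deeper nestings the first occurrence of $y$ may sit outside the innermost bracket, for example $[[c,c'],y]$. Then we attach $q_v$ to the bracket $[y,[c,c']]$, using that $[c,c']\in\mf g$. Since brackets are left-normed, the first bracket containing $y$ is always of the form $[w,y]$ or $[y,w]$ with $w\in\mf g$, so this works. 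Everything after that step stays in $I$ because $I$ is an ideal.
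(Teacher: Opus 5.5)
Your proof is correct. For $c*I\subseteq c+I$ it is the paper's argument, with the added care that the strong-ideal condition controls $q_v[x,y]$ with the $I$-element in the \emph{first} slot, whereas $[c,x]_v$ starts with $c$; antisymmetry of the innermost bracket fixes this.

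For $c+I\subseteq c*I$ you take a genuinely different and shorter route. The paper runs a downward induction along $I_i=I\cap\mf g_i$. For $y\in I_{i-1}$ it rewrites $c+y=(c*y)-\sum_v q_v[c,y]_v$ and observes that the correction term lies in $I_i$. It then applies the induction hypothesis at the new base point $c*y\in G$ and concludes with $c+y=(c*y)*y'=c*(y*y')$. You instead take the only possible candidate, $x=(-c)*(c+y)$, and expand it once. The innermost bracket of each $[-c,c+y]_v$ is $[-c,c+y]=[y,c]$, so every correction term $q_v[-c,c+y]_v=[\cdots[q_v[y,c],w_1],\dots,w_r]$ (with $w_j\in\{-c,c+y\}$) lies in $I$, and hence $x\in y+I=I$.

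Both proofs rely on the group law of $(\mf g,*)$: associativity, and in your case also $c^{-1}=-c$. Yours avoids the filtration and the induction entirely, and it works for every $c\in\mf g$, not only $c\in G$.

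Two minor points. Normality of $I$ plays no role in defining $x$. The multilinear expansion and the discussion of terms like $[[c,c'],y]$ in your last paragraph are unnecessary, since you can attach $q_v$ directly to the innermost bracket $[y,c]$.
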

	\begin{proof}
		First let $y\in I$ be arbitrary, then $c*y$ is given by
		$$
		c*y=c+y+\sum_{v \in \mc I_k} q_v[c,y]_v.
		$$

		By definition of a strong ideal clearly $c*y\in c+I$.
		
		For the converse direction we proceed by induction.
		Denote $I_i=I\cap\mf g_{i}$. First notice that if $y\in I_{k}$, then $[c,y]_v$ vanishes for all $v\in\mc I$ and thus $c+y=c*y$. Assume now that for any $c\in G$ and for any $y\in I_i$ there exists some $y'\in I_i$ (depending on $c$) such that $c+y=c*y'$. Let $y\in I_{i-1}$ be arbitrary.
		Then we have by \cref{prop:bch} that $$
		c*y=c+y+\sum_{v \in \mc I_k} q_v[c,y]_v
		$$
		or thus rearranging terms
		$$c+y=c*y-\sum_{v \in \mc I_k} q_v[c,y]_v$$
		As $\displaystyle \sum_{v \in \mc I_k} q_v[c,y]_v\in I_{i}$, we have that, using the induction hypothesis on $c*y$ and $- \displaystyle \sum_{v \in \mc I_k} q_v[c,y]_v$, there exists some $y'\in I_i$ such that $c+y=(c*y)*y'=c*(y*y')$
		By induction, it follows that for any $y\in I$, $c+y\in c*I$ as had to be shown.
	\end{proof}
	
	\begin{corollary}\label{prop:IndexCorrespondence}
		Take notations as in \Cref{prop:bch}, then there exists some constant $M$ such that for $I$ a strong ideal $$
		[\mf g:(I,+)]\geq[G:(I,*)]\geq 1/{M}[\mf g:(I,+)]
		$$
	\end{corollary}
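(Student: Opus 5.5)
The plan is to compare both indices with the index of $I$ in the ambient additive structure, using the coset identity $c+I=c*I$ from \cref{prop:CosetsEquivalent}. The group $(\mf g,*)$ contains $G$, and $I\subset G$ is normal in $G$ by \cref{prop:StrongIdealIsNormal}. The cosets of $(I,*)$ in $G$ are the sets $c*I$ with $c\in G$. By \cref{prop:CosetsEquivalent} each of these equals the additive coset $c+I$.

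Two distinct cosets $c*I\neq c'*I$ in $G$ are disjoint subsets of $\mf g$. They are therefore distinct additive cosets $c+I\neq c'+I$ of $(I,+)$ in $\mf g$. This gives an injection $G/(I,*)\hookrightarrow \mf g/(I,+)$, $c*I\mapsto c+I$, and hence $[\mf g:(I,+)]\geq[G:(I,*)]$. If the left side is infinite the inequality is trivial, so we may assume finiteness.

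For the lower bound, the additive cosets of $I$ that meet $G$ are exactly the images of this map, because $c+I=c*I\subset G$ whenever $c\in G$. So $[G:(I,*)]$ equals the number of cosets of $(I,+)$ meeting $G$, and it suffices to show that every additive coset $x+I$ meets $G$ up to a bounded multiplicity. The key input is $m\mf g\subset \log(G)=G$ from \cref{prop:bch}. Since $I\subset G$, we have $m\mf g + I\subseteq G+I$. Every additive coset of $I$ contained in $m\mf g+I$ meets $G$, because $m\mf g\subset G$. Hence
\[
[G:(I,*)]\geq [m\mf g+I:I]=\frac{[\mf g:I]}{[\mf g:m\mf g+I]}\geq \frac{[\mf g:I]}{[\mf g:m\mf g]}.
\]
Setting $M=[\mf g:m\mf g]=m^{h}$, with $h$ the rank of $\mf g$, gives the claim with a constant independent of $I$.

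I expect no real obstacle. The only delicate point is checking that $c+I$ for $c\in G$ is exactly a coset of the group $(I,*)$ inside $G$, and that different group cosets are disjoint as sets. Both follow directly from \cref{prop:CosetsEquivalent} and the fact that $(I,*)$ is a subgroup, which \cref{prop:StrongIdealIsNormal} provides.
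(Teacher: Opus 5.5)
Your proof is correct and follows the paper's argument. The injection $G/(I,*)\to\mf g/(I,+)$, $c*I\mapsto c+I$, coming from \cref{prop:CosetsEquivalent}, gives the first inequality, and the observation that every additive coset in $(m\mf g+I)/I$ meets $G$ (the paper's injection $(m\mf g+I)/I\to G/I$) gives the second with $M=m^{h(G)}$; you additionally spell out the ``standard argument'' $[m\mf g+I:I]\geq[\mf g:I]/m^{h}$ that the paper leaves implicit.
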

	\begin{proof}
		By \cref{prop:CosetsEquivalent}, the map $f: G/I\rightarrow \mf g/I$ mapping $g*I$ to $g+I$ is injective and well-defined.
		By \cref{prop:bch} there exists some constant $m$ such that $m\mf g\subset G$. Again by \cref{prop:CosetsEquivalent}, the map $g:(m\mf g+I)/I\longrightarrow G/I$ is also injective and well-defined.
		The result follows for $M = m^{h(G)}$ by a standard argument.
	\end{proof}
	We also have the following consequences, which will be used in the proofs below.
	
	\begin{corollary}\label{prop:PreludeGInvIdeal}
		Take notations as in \Cref{prop:bch} and let $I$ be a strong ideal, then for any $y\in I$ and any $g\in \mf g$, it holds that $g\in G$ if and only if $y+g\in G$.
	\end{corollary}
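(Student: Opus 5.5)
The plan is to derive this directly from \cref{prop:CosetsEquivalent}. Fix a strong ideal $I$, an element $y\in I$ and an arbitrary $g\in\mf g$. I will prove the two implications separately. Since $-y\in I$ and $g=(y+g)+(-y)$, the situation is symmetric, so in fact one implication suffices.

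Suppose first that $g\in G$. By \cref{prop:CosetsEquivalent} applied with $c=g$, we have $g+I=g*I$. Since $y\in I$, the element $g+y$ lies in $g+I$, so it can be written as $g*y'$ for some $y'\in I$. By definition of a strong ideal, $I\subset G$, so $y'\in G$. As $G$ is a subgroup of $(\mf g,*)$ and $g\in G$, it follows that $g*y'\in G$, that is, $y+g\in G$.

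Conversely, suppose $y+g\in G$. Apply the first direction to the element $g'=y+g\in G$ and to $-y\in I$; here we use that $I$ is an additive submodule, being an ideal. This gives $g'+(-y)=g\in G$.

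I do not expect a real obstacle. The only points needing care are that \cref{prop:CosetsEquivalent} requires its base point $c$ to lie in $G$, and that $I\subset G$ must be used so that $y'$ is a group element. Both are handled by the ordering chosen above.
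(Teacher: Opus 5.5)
Your proof is correct and follows essentially the same route as the paper. You apply \cref{prop:CosetsEquivalent} with base point $g\in G$ to write $g+y=g*y'$ with $y'\in I\subset G$, and you obtain the converse by replacing $y$ with $-y$; the paper leaves that symmetry implicit when it says one direction suffices.
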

	\begin{proof}
		It suffices to show one direction of the corollary. Let $g\in \mf g,y\in I$ be arbitrary, then $g+y\in g+I$ thus \cref{prop:CosetsEquivalent} implies that there exists some $y'\in I\subset G$ such that $g+y=g*y'$. As $G$ is a subgroup of $(\mf g,*)$, the result follows.
	\end{proof}
	\begin{corollary}\label{prop:GinvIdeal}
		Take notations as in \Cref{prop:bch}, then for any $g\in G,y\in\mf g$ holds that $g+my\in G$.
	\end{corollary}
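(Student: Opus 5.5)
We want to show that for any $g\in G$ and any $y\in\mf g$, the element $g+my$ lies in $G$. The plan is to reduce this directly to \cref{prop:PreludeGInvIdeal}, using the strong ideal exhibited in \Cref{eg:strongIdeal}.

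First, recall from \Cref{eg:strongIdeal} that $I=m\mf g$ is a strong ideal. Here $m$ is the constant from \Cref{prop:bch} with $m\mf g\subset \log(G)=G$. The verification is already given there: $m\mf g\subset G$ by the choice of $m$, and it is an ideal. Moreover, for $x=mx'\in m\mf g$ and $y\in\mf g$, bilinearity gives $q_v[mx',y]=m\,q_v[x',y]$, and similarly for $r_v$. This lies in $m\mf g$ because $q_v[\_,\_]$ and $r_v[\_,\_]$ are internal in $\mf g$ by \Cref{prop:bch}.

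Next, fix $g\in G$ and $y\in\mf g$, and set $y_0=my\in I$. Applying \cref{prop:PreludeGInvIdeal} to the element $g\in\mf g$ and to $y_0\in I$, we get that $g\in G$ if and only if $y_0+g\in G$. Since $g\in G$ by assumption, it follows that $g+my\in G$.

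There is essentially no obstacle here. The only point to double-check is that the constant $m$ in the statement is the same $m$ as in \Cref{prop:bch}, so that $m\mf g$ is indeed the strong ideal of \Cref{eg:strongIdeal}. Beyond that, the argument is a one-line specialisation of \cref{prop:PreludeGInvIdeal}, which itself rests on the coset equality $c+I=c*I$ of \cref{prop:CosetsEquivalent}.
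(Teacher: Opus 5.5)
Your proposal is correct and is exactly the paper's argument. \Cref{eg:strongIdeal} shows that $m\mf g$ is a strong ideal, and applying \cref{prop:PreludeGInvIdeal} with $I=m\mf g$ and the element $my\in I$ gives $g+my\in G$ for every $g\in G$.
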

	\begin{proof}
		As $m\mf g$ is a strong ideal, the result follows using \cref{prop:PreludeGInvIdeal}.
	\end{proof}

    The most important property of strong ideals that will be used is the relation between the Lie bracket and the commutator.
	
		\begin{proposition}\label{prop:ConjTranslation}
		Let $G,\mf g$ be as in \Cref{prop:bch}. Let $I$ be a strong ideal and let $c\in \mf g_{k}$. Then for $a,g\in\mf g$, we have that $\{a,g\}+I=c+I$ if and only if $[a,g]+I=c+I$.
	\end{proposition}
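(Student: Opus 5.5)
The plan is to use the explicit form of $\{\_,\_\}$ from \cref{prop:bch} together with one structural observation: every correction term in $\{a,g\}$ is obtained from $[a,g]$ by bracketing further with the fixed elements $a$ and $g$. Indeed, for $v=(v_1,\dots,v_n)\in\mc I$ with $v\neq(1)$ we have $v_1\ge 1$, so $[a,g]_v$ is an iterated bracket whose innermost part is $[a,g]$. It is followed by at least one further bracket with $a$ or $g$. Hence, writing $x=[a,g]$, there are $\ZZ$-linear maps $L_v$ with
\[ \{a,g\}=x+F(x),\qquad F(x)=\sum_{(1)\neq v\in\mc I_k} r_vL_v(x), \]
where each $L_v(z)=[[\cdots[z,w_1],w_2],\cdots],w_t]$ with $t\ge1$ and $w_j\in\{a,g\}$. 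I will regard $F$ as a $\QQ$-linear map $\QQ\otimes\mf g\to\QQ\otimes\mf g$, even though each term $r_vL_v(x)$ in the actual expression is internal in $\mf g$. The two properties of $F$ that I need are the following.
\begin{enumerate}
\item[(i)] $F$ kills $\mf g_k$. This holds because $\mf g_k$ is central: $[\mf g_k,\mf g]\subset\sqrt{\gamma_{k+1}\mf g}=0$ by torsion-freeness.
\item[(ii)] $F(I)\subset I$ and $F(\mf g_j)\subset\mf g_{j+1}$. For $i\in I$, the element $r_v[i,w_1]$ lies in $I$ by the definition of a strong ideal. The remaining brackets with the $w_j$ keep it in $I$ because $I$ is an ideal. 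The inclusion $F(\mf g_j)\subset\mf g_{j+1}$ is clear, since $t\ge1$.
\end{enumerate}

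For the direction ``$\Leftarrow$'', suppose $[a,g]=c+y$ with $y\in I$. By linearity and (i), $F(c+y)=F(y)$, and $F(y)\in I$ by (ii). So $\{a,g\}=c+y+F(y)\in c+I$.

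For the direction ``$\Rightarrow$'', suppose $\{a,g\}=c+y$ with $y\in I$, and put $u=x-c$. By (i), $F(x)=F(u)$, so $u=y-F(u)$. I will show by induction on $j$ that $u\in I+\mf g_j$ for every $j\ge 2$. For the base case $j=2$, note $u=[a,g]-c\in\mf g_2$; here $k\ge2$ may be assumed, since otherwise all brackets vanish and the statement is trivial. For the inductive step, write $u=i+w$ with $i\in I$ and $w\in\mf g_j$. Then
\[ u=y-F(i)-F(w)\in I+\mf g_{j+1} \]
by (ii). Since $\mf g_{k+1}=0$, we conclude $u\in I$, that is, $[a,g]+I=c+I$.

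The main obstacle is the bookkeeping with rational coefficients. The individual terms $r_vL_v(i)$ must be shown to lie in $I$, and not merely in $\QQ\otimes I$. This is exactly where the strong ideal condition on $r_v[x,y]$, rather than just the ideal property, is used. The bookkeeping also requires that the decomposition $u=i+w$ at each stage takes place inside $\mf g$, so that $w=u-i\in\mf g_j$ is an honest element of the Lie ring. This holds because $u$, $i$ and $\mf g_j$ all live in $\mf g$.
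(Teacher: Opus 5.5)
Your proof is correct and follows essentially the same route as the paper. For the implication from $[a,g]+I=c+I$, both arguments use the centrality of $\mathfrak g_k$ together with the strong-ideal condition on $r_v[\_,\_]$ to absorb the correction terms into $I$; for the converse, both induct down the adapted lower central series. The only cosmetic difference is the ending: the paper shows $[a,g]\in\mathfrak g_i+I$ up to $i=k$ and then reuses the forward argument, whereas you track $u=[a,g]-c$ and push the induction to $\mathfrak g_{k+1}=0$. As in the paper, your inclusion $r_vL_v(\mathfrak g_j)\subset\mathfrak g_{j+1}$ tacitly uses that these terms lie in $\mathfrak g$ and that $\mathfrak g_{j+1}$ is isolated.
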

	\begin{proof}
		For the first implication, assume that $[a,g] + I = c + I$. By \cref{prop:bch} and the definition of strong ideal, we thus have $\{a,g\}+I=[a,g]+I$, which immediately gives us the result. 
		
        Conversely assume that $\{a,g\}\in c+I$. First we show by induction that whenever $i\leq k$, then $[a,g]\in\mf g_i+I$. For the base step, it is clear that $[a,g]\in \mf g_2$. For the induction step, assume that $[a,g]\in\mf g_i+I$. Then for any $v\neq(1)$, we have $r_v[a,g]_v\in \mf g_{i+1}+I$. By \cref{prop:bch} we have $$
		\{a,g\}+\mf g_{i+1}+I=[a,g]+\sum_v r_v[a,g]_v+\mf g_{i+1}+I
		$$
		or thus $\{a,g\} + \mf g_{i+1}+I=[a,g]+\mf g_{i+1}+I$. By induction it follows that $[a,g]\in\mf g_{k} + I$. Again we conclude that $[a,g]+I=\{a,g\}+I$.
	\end{proof}
	In particular the above result works for $I=0$.
	
	Although strong ideals satisfy stronger conditions than ideals, every ideal contains a strong ideal of small index.
	
	\begin{lemma}\label{prop:IdealContainsStrong}
	Take notations as in \Cref{prop:bch}. There exists a constant $C$ such that for any ideal $I \subset \mf g$, there exists some ideal $I'\subset I$ with $[I:I^\prime]\leq C $ and $I'$ is a strong ideal in $\mf g$. Furthermore, $I'$ can be chosen such that $m(\mf g_{k}\cap I)=\mf g_{k}\cap I'$ where $m$ is the constant from \cref{prop:bch}.
	\end{lemma}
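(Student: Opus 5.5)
The natural candidate is $I' = N I$ for a suitable integer $N$, perhaps adjusted so that it meets $\mf g_k$ exactly in $m(\mf g_k\cap I)$. Write $D$ for a common denominator of all the rationals $q_v, r_v$ with $v\in\mc I_k$. By \Cref{prop:bch}, $q_v[x,y]$ and $r_v[x,y]$ already lie in $\mf g$ for all $x,y\in\mf g$. Hence for any ideal $J$ and $x\in J$, the element $q_v[Dx,y]=(Dq_v)[x,y]$ is an integer multiple of $[x,y]$, so it lies in $J$.

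The first attempt is therefore $I''=mD\,I$. This is an ideal, since $[mDx,y]=mD[x,y]\in mDI$. It is contained in $m\mf g\subset G$. It is also strong: for $x=mDx_0$ with $x_0\in I$ we get $q_v[x,y]=m\cdot(Dq_v)[x_0,y]$ and $(Dq_v)[x_0,y]\in I$, so $q_v[x,y]\in mI$. This is not quite $mDI$, so $N$ has to be chosen more carefully. Take instead $I''=mD^{k}I$ and argue through the filtration, or more simply use the stronger condition that $q_v[x,y]$ for $x\in I''$ lands in $I''$.

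Cleaner: let $J=\{x\in I \mid q_v[x,y], r_v[x,y]\in I \ \text{for all } y, v\}$. Then $DI\subset J$, and we want an ideal closed under the operations. Iterating $J_{j+1}=\{x\in J_j\mid q_v[x,\mf g],r_v[x,\mf g]\subset J_j\}$ shrinks the index by at most $D^{h(G)}$ each time. Since brackets push into deeper terms of the adaptive lower central series, working on the filtration $I\cap\mf g_i$ should stabilise after $k$ steps.

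My concrete choice instead is
$$I'=\sum_{i=1}^{k} m D^{\,k-i}\,(I\cap \mf g_i).$$
This is a submodule of $mI\subset G$. It is an ideal, because $[mD^{k-i}(I\cap\mf g_i),\mf g]\subset mD^{k-i}(I\cap\mf g_{i+1})\subset mD^{k-i-1}(I\cap\mf g_{i+1})$. It is strong, because $q_v[mD^{k-i}x,y]=mD^{k-i-1}(Dq_v)[x,y]$ with $(Dq_v)[x,y]\in I\cap\mf g_{i+1}$, which is a summand of $I'$. The term $i=k$ would need the exponent $-1$, but there the bracket vanishes since $[\mf g_k,\mf g]=0$ in a class-$k$ ring. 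The same computation handles $r_v$.

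The intersection with $\mf g_k$ needs care. The summand $mD^0(I\cap\mf g_k)=m(I\cap\mf g_k)$ lies in $I'\cap\mf g_k$. Conversely, $I'\subset mI$, so $I'\cap\mf g_k\subset mI\cap\mf g_k=m(I\cap\mf g_k)$. This last equality holds because $\mf g_k$ is isolated: if $mx\in\mf g_k$ with $x\in I$, then $x\in\mf g_k$. So equality holds.

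For the index bound, $[I:I']\le[I:mD^{k-1}I]=(mD^{k-1})^{h(G)}$, using that $I$ is free of rank at most $h(G)$. This is a uniform constant.

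The main obstacle is checking that the filtered construction really is closed under all $q_v,r_v$ terms. The bracket must land in the next filtration level, and that level carries one fewer factor of $D$. This relies on the $\mf g_i$ forming a central series, $[\mf g_i,\mf g]\subset\mf g_{i+1}$, which holds for the adaptive series.
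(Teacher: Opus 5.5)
Your final construction $I'=\sum_{i=1}^{k} mD^{k-i}(I\cap\mathfrak{g}_i)$ is the paper's $I'=\sum_{i\le k} d^{k-i}(mI\cap\mathfrak{g}_i)$; the two agree because each $\mathfrak{g}_i$ is isolated. Your checks of the ideal and strong-ideal properties, of $\mathfrak{g}_k\cap I'=m(\mathfrak{g}_k\cap I)$, and of the index bound $[I:I']\le (mD^{k-1})^{h(G)}$ follow the paper's argument, and the exploratory detours before that choice are unnecessary but harmless.
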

	\begin{proof}
		Take $J = mI$, implying that $J \subset G$ by \cref{prop:bch}. Take $d$ the least common multiple of the denominators of the constants $q_v$ and $r_v$ for $v\in\mc I_{k}$. 
		Write $I_i = J \cap \mf g_i$ and take $I'=\sum_{i\leq k}d^{k-i}I_i$.
		Note that as both $I'\subset J$ and $\mf g_{k}\cap J\subset I'$, it holds that $\mf g_{k}\cap I'=\mf g_{k}\cap J=m(\mf g_{k}\cap I)$. The index of $[I:I^\prime]$ is uniformly bounded, as it is smaller than the index of $d^{k-1}I_1$ in $I$, which is bounded by $m^{h(\mf g)}d^{(k-1)h(\mf g)}$.
		
        It remains to show that $I^\prime$ is a strong ideal, or thus that if $x\in I'$ and $y\in \mf g$, then $q_v[x,y],r_v[x,y]\in I'$. By linearity of the maps $q_v[\_,y]$ and $r_v[\_,y]$ it suffices to show this whenever $x\in d^{k-i}I_i$ for some $i$. Take $x_0\in I_i$ with $x=d^{k-i}x_0$. Now $q_v[x,y]=q_vd(d^{k-i-1}[x_0,y])$. As $J$ is an ideal, it follows that $[x_0,y]\in J$ and thus $[x_0,y]\in I_{i+1}$. By definition of $d$, the number $q_vd$ is an integer and thus $q_v[x,y]\in (q_vd) d^{k-i-1}I_{i+1}\subset I'$. The same argument works for $r_v[x,y]$ and shows that $I^\prime$ is an ideal.
	\end{proof}
	Similarly, we can obtain a strong ideal from a normal subgroup of $G$.
\begin{lemma}\label{prop:SubgroupContainsStrong}
	Take notations as in \Cref{prop:bch}. There exists some constant $C$ such that for every normal subgroup $N \triangleleft G$, there exists a strong ideal $I$ in $G$ with $I\subset N$ and $[N:I] \leq C$. Furthermore, $I$ can be chosen such that $\mf g_{k}\cap N=\mf g_{k}\cap I$.
\end{lemma}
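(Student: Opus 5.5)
The plan is to pass from the normal subgroup $N$ to an ideal of $\mf g$ and then apply \cref{prop:IdealContainsStrong}. The natural first candidate is $J$, the additive subgroup of $\mf g$ generated by $mN$ or by $N$ itself, but $N$ need not be closed under $+$. I would instead use a truncation argument that mirrors the proof of \cref{prop:IdealContainsStrong}, working along the adaptive lower central series.

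\textbf{Step 1: find a large additive subgroup inside $N$.} Let $e$ be a fixed integer, depending only on $G$ and $\mf g$; it will be a suitable power of $d\cdot m$, with $d$ the common denominator of the $q_v,r_v$ as before. Write $N_i=N\cap \mf g_i$ and set
\[
J=\sum_{i\le k} e^{k-i}\log\bigl(\text{power subgroup}\bigr),
\]
where each summand is taken to be $e^{k-i}\bigl(\mf g_i\cap \sqrt{N_i}\bigr)\cap N$. I would prove, by downward induction on $i$, that $e^{k-i}$ times a $*$-subgroup of $N_i$ is additively closed modulo $\mf g_{i+1}\cap N$. The base case is $i=k$: there $*$ and $+$ agree on $\mf g_k$, so $N_k$ is an additive subgroup. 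For the induction step, the BCH formula in \cref{prop:bch} shows that $x*y-x-y$ is a rational combination of brackets lying in $\mf g_{i+1}$. After multiplying by powers of $d$, these correction terms land in the already-constructed additive part at level $i+1$. The induction also needs the normality of $N$: group commutators $\{x,y\}$ with $x\in N$ lie in $N$, and \cref{prop:bchGroupCommutator} lets me rewrite the Lie brackets $[x,y]$ through group commutators. This shows that the constructed $J$ is an ideal of $\mf g$ contained in $N$.

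\textbf{Step 2: bound the index and fix the top layer.} The index $[N:J]$ is uniformly bounded because each layer loses at most a factor $e^{(k-i)h(G)}$, and converting between $*$-index and $+$-index uses the constant $m^{h(G)}$ argument of \cref{prop:IndexCorrespondence}. At the top layer I would arrange that $J\cap \mf g_k=N\cap\mf g_k$, which is possible since $N\cap\mf g_k$ is already an additive ideal there.

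\textbf{Step 3: pass to a strong ideal.} Finally, I would make $J$ strong. I cannot simply apply \cref{prop:IdealContainsStrong}, since it multiplies $\mf g_k\cap J$ by $m$, which would break the equality required at the top. I would therefore repeat its construction $I=\sum d^{k-i}(J\cap \mf g_i)$ without the factor $m$, using that $J\subset N\subset G$ already. Then $I\cap\mf g_k=J\cap\mf g_k=N\cap\mf g_k$, and the strong ideal property follows exactly as in that proof.

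The main obstacle is Step 1, namely showing that the layered set is closed under both $+$ and Lie brackets while staying inside $N$. This requires careful control of the BCH correction terms, and of the inverse formula expressing $x+y$ in terms of $*$, as in \cref{prop:CosetsEquivalent}.
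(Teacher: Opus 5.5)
Your strategy is essentially the paper's. You build a layered set $\sum_{i\le k}e^{k-i}(N\cap\mf g_i)$ along the adaptive lower central series by downward induction with the BCH formula, using normality of $N$ and \cref{prop:bchGroupCommutator} to push brackets into the next layer. But there is a gap at Step 1. It carries all the content, yet it is only asserted, and the summands you write down, via $\sqrt{N_i}$, are not the right objects. The mechanism needs the \emph{unscaled} element to lie in $N$ and the second argument to lie in $G$. The paper takes $I_i=N\cap\mf g_i$ and $I=\sum_{i\le k}(md)^{k-i}I_i$, with $d$ clearing the denominators of the $q_v$, $r_v$ \emph{and} $s_v$. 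For $x=(dm)^{k-j}x_0$ with $x_0\in I_j$, $y\in\mf g$ and $\alpha\in\{1,q_v,r_v,s_v\}$, it writes
\[
\alpha[x,y]=(\alpha d)(dm)^{k-j-1}\{x_0,my\}-\sum_{(1)\neq v\in\mc I_k}(\alpha d)(dm)^{k-j-1}s_v\{x_0,my\}_v .
\]
Because $my\in G$ and $N$ is normal, $\{x_0,my\}\in I_{j+1}$, so the first term lies in $(dm)^{k-j-1}I_{j+1}$ (integer multiples are $*$-powers). The remaining terms lie in $(dm)^{k-j-2}I_{j+2}$. Hence $\alpha[x,y]$ lies in $\sum_{i>j}(dm)^{k-i}I_i$, which is a submodule by induction.

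This is why each layer carries an extra factor $m$, and why one commutes $x_0$ rather than $x$: commuting $x$ only controls $m[x,y]$. If $x_0$ is merely in $\sqrt{N_i}$, then $\{x_0,my\}$ lies only in $\sqrt N$. Since $[\sqrt{N_i}:N_i]$ is not bounded uniformly in $N$, a fixed $e$ cannot be expected to bring the resulting terms back into $N$, so use $N\cap\mf g_i$ itself. Additive closure then follows from $(dm)^{k-j}(x_0*x_0')=(dm)^{k-j}x_0+(dm)^{k-j}x_0'+\sum_v(dm)^{k-j}q_v[x_0,x_0']_v$. Its last sum lies in the deeper layer by the bracket step with $\alpha=q_v$.

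With this choice of $d$, the induction already gives $q_v[x,y],r_v[x,y]\in I$, and you need the $q_v$ case anyway for additive closure. So $I$ is strong at once, and $I\cap\mf g_k=N\cap\mf g_k$ is automatic since the top coefficient is $(md)^0=1$. Your Step 3 is correct, and you are right that \cref{prop:IdealContainsStrong} cannot be quoted verbatim because of its factor $m$; but the step is redundant. Nor is \cref{prop:IndexCorrespondence} needed for the index. Each $(N\cap\mf g_j)/(N\cap\mf g_{j+1})$ embeds in a free abelian group of rank $h(\mf g_j/\mf g_{j+1})$, and $I\cap\mf g_j$ contains the $(md)^{k-j}$-th powers of $I_j$. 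This gives $[N:I]\le(md)^{(k-1)h(\mf g)}$.
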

\begin{proof}
	Take $d$ the least common multiple of the denominators of the constants $q_v,r_v$ and $s_v$ for $v\in\mc I_{k}$ and write $I_i = N\cap \mf g_i$. Note that the $I_i$ are not automatically submodules. Define $I=\sum_{i\leq k}(md)^{k-i}I_i$, then we first show that $I$ is a strong ideal. For this, let $I^\prime_j=\sum_{j\leq i\leq k}(md)^{k-i}I_i$.
	We show by induction (where $j$ goes from $k$ to $1$) that for $x\in I_j'$ and $y\in \mf g$, the elements $[x,y],q_v[x,y],r_v[x,y],s_v[x,y]\in I^\prime_{j+1}$ and that $I_j'$ is a submodule of $\mf g$.
	
	Take any $x\in I_{j}'$. As $I_{j+1}'$ is by induction already a submodule, we might assume that $x\in (md)^{k-j} I_j$, so $x=(dm)^{k-j}  x_0$ for some $x_0\in I_j$. Let $\alpha \in \QQ$ be either $1,q_v,r_v$ or $s_v$ for some $v\in \mc I_{k}$.
	By \cref{prop:bchGroupCommutator}, we have$$
	\alpha d^{k-j} m^{k-j-1} \{x_0,my\}=\alpha[(dm)^{k-j} x_0,y]+\sum_{(1) \neq v \in \mc I_k}\alpha d^{k-j} m^{k-j-1}s_v\{x_0,my\}_v.
	$$
	As $\alpha d\in \ZZ$, we have that the left hand side is an element of $(dm)^{k-j-1} I_{j+1}$. Furthermore each of the repeated commutators $\{x_0,my\}_v$ is an element of $I_{j+2}$. As $I_{j+1}'$ is a submodule and $\alpha d^2s_v \in \ZZ$, we obtain that $$
	d^{k-j} m^{k-j-1} \alpha \{x_0,my\}-\sum_v d^{k-j} m^{k-j-1}s_v \alpha \{x_0,my\}_v\in I_{j+1}'.
	$$
	and thus that $\alpha [x,y]\in I_{j+1}'$.
	
    We now show that $I_j'=(dm)^{k-j} I_j+I_{j+1}'$ is closed under addition. Again it suffices to show for $x,y\in (dm)^{k-j} I_j$ that $x+y\in (dm)^{k-j} I_j+I_{j+1}'$. Let $x_0,y_0\in I_j$ such that $x=(dm)^{k-j} x_0$ and $y=(dm)^{k-j} y_0$
	We have by \cref{prop:bch}$$
	(dm)^{k-j} (x_0*y_0)=(dm)^{k-j} x_0+(dm)^{k-j} y_0+\sum_v (dm)^{k-j} q_v[x_0,y_0]_v
	$$
	The left hand side belongs to $(dm)^{k-j} I_j$ and the last term belongs to $I_{j+1}'$ by the above, thus $x + y = (dm)^{k-j} x_0+(dm)^{k-j} y_0$ belongs to $(dm)^{k-j} I_j+I_{j+1}'$. By induction, $I=I_1'$ is a submodule and satisfies \[ [I,\mf g],\ q_v[I,\mf g],\ r_v[I,\mf g]\subseteq I. \] Hence $I$ is a strong ideal.
    
   Since $(md)^{k-j}I_j\subseteq I_j'$ for every $j$, the successive quotients along the adaptive lower central series give \[ [N:I]\leq\prod_{j=1}^{k-1}(md)^{(k-j)h(\mf g_j/\mf g_{j+1})}\leq(md)^{(k-1)h(\mf g)}. \] Thus the result follows with $C=(md)^{(k-1)h(\mf g)}$.
\end{proof}	
	

	\subsection{Conjugacy in Lie rings}

	\cref{prop:ConjTranslation} gives us a strategy to demonstrate \cref{prop:AlgUpperBound} by first reducing the problem to separating elements $a$ and $ac$ where $c\in G_{k}$.
	These elements are conjugate if and only if there exists some $g\in G$ such that $\{a,g\}=c$ or thus by the above such that $[a,g]=c$. We then find an ideal $I$ of $\mf g$ such that $c\notin [a,G]+I$. Finally, we translate $I$ to a normal subgroup of $G$ such that $c\notin \{a,G\}I$. For what follows, we fix a generating set $S$ of $G$ and a basis $S^\prime$ for $\mf g$.
	\newcommand{\normg}[1]{\norm{#1}_{S}}
	\newcommand{\normr}[1]{\norm{#1}_{+,S^\prime}}
	To avoid confusion, we denote the word norm in $G$ with $\norm{\cdot}_{* S}$ and the one in $\mf g$ by $\norm{\cdot}_{+,S^\prime}$.

	In the following lemmas, we determine the index of $[a,\mf g]$ in its isolator.
	\begin{lemma}\label{prop:NormCorespondence}
		Take notations as above, then there exists some constant $C$ such that for any $g\in G$ and any $i$, the norm $\normr{g+\mf g_{i+1}}$ in $\frac{\mf g}{\mf g_{i+1}}$ can be bounded above by $\normr{g+\mf g_{i+1}}\leq C\normg{g}^{ i}$.
	\end{lemma}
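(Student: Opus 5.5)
We argue by induction on the word length. Write $g=s_1*s_2*\cdots*s_n$ with $s_j\in S\cup S^{-1}$ and $n=\normg{g}$. The aim is the slightly stronger statement that, for each $i$, the component of $g$ in $\nu_j$ has $S'$-norm at most $C_j n^{j}$ for every $j\le i$. Here $\mf g=\bigoplus_j\nu_j$ is a fixed decomposition adapted to the adaptive lower central series, and the basis $S'$ may be assumed compatible with it (up to a constant, by \cref{prop:normAreEquiv}). Equivalently, we show that $\gnorm{g}\le C n$, i.e.\ that the Guivarch norm of $\log(g)$ is linear in the word length. Since $\normr{g+\mf g_{i+1}}$ is bounded by a constant times $\max_{j\le i}\|g_j\|_{j}$, and $\|g_j\|_j\le \gnorm{g}^{j}\le C^j n^j\le C^j n^i$ for $j\le i$, this gives the claim. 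So the plan reduces to the classical fact that the Guivarch norm is quasi-subadditive with respect to the BCH product.

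The key step is this quasi-subadditivity: there is a constant $C_0$ with $\gnorm{x*y}\le C_0(\gnorm{x}+\gnorm{y})$ for all $x,y\in\mf g$. To show it, expand $x*y=x+y+\sum_{v\in\mc I_k}q_v[x,y]_v$ by \cref{prop:bch}. Then decompose $x=\sum x_j$ and $y=\sum y_j$ into graded pieces and use multilinearity of the bracket. A bracket of $r$ graded components of degrees $j_1,\dots,j_r$ lands in $\mf g_{j_1+\cdots+j_r}$, and its norm is bounded by a constant times $\prod\|x_{j_t}\|$ or $\prod\|y_{j_t}\|$. Each factor is at most $\gnorm{\cdot}^{j_t}$. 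Hence the product is at most $(\gnorm{x}+\gnorm{y})^{j_1+\cdots+j_r}$, which is exactly the right homogeneity for the $\nu_{j_1+\cdots+j_r}$-component. We should take care that the projection of an element of $\mf g_{l}$ onto $\nu_{l'}$ for $l'\ge l$ is controlled linearly. This holds because the projections are fixed linear maps, and $(\gnorm{x}+\gnorm{y})^{l}\le (\gnorm{x}+\gnorm{y})^{l'}$ once the quantity is at least $1$. The small case is harmless, since the values are bounded there.

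However, iterating $\gnorm{x*y}\le C_0(\gnorm x+\gnorm y)$ $n$ times gives exponential blow-up, not a linear bound. This is the main obstacle. I would avoid it by inducting on the filtration degree instead of on single products. The linear part is additive modulo $\mf g_2$, so the $\nu_1$-component of $s_1*\cdots*s_n$ is $\sum s_j$ modulo $\mf g_2$, of norm at most $Cn$. Next, assume the $\nu_j$-components of all partial products $p_t=s_1*\cdots*s_t$ are bounded by $C_jt^j$ for $j<i$. The $\nu_i$-component of $p_{t+1}=p_t*s_{t+1}$ equals that of $p_t$, plus that of $s_{t+1}$, plus bracket terms. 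These bracket terms are polynomial in components of $p_t$ of degree $<i$ with one factor from $s_{t+1}$ (of bounded norm), and they are homogeneous of weight $i$ in which $s_{t+1}$ contributes weight at least $1$. So each increment is $O(t^{i-1})$. Summing over $t\le n$ gives $C_i n^i$. This telescoping induction over $i=1,\dots,k$ is the heart of the proof and yields the lemma directly, with $C=\max_i C_i$ adjusted by the constant relating $\normr{\cdot}$ on $\mf g/\mf g_{i+1}$ to the graded sup-norms.
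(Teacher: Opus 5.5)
Your argument is correct, but it takes a different route from the paper's. The paper cites Guivarch's theorem that the word norm on $G$ is bilipschitz equivalent to the Guivarch norm of $\log g$, and then reads off $\|g_j\|_j\le \gnorm{g}^j\le (Cn)^j$ from the definition. You instead prove from scratch only the half of that theorem the lemma actually needs, namely $\gnorm{\log g}\le C\|g\|_S$.

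You do this by telescoping along the partial products $p_{t+1}=p_t*s_{t+1}$ and inducting on the filtration degree. The $\nu_1$-part is additive. For $i\ge 2$, each BCH correction adds $O(t^{i-1})$ to the $\nu_i$-component, because every bracket term contains at least one entry from the bounded element $s_{t+1}$, so the $p_t$-entries have total degree at most $i-1$ and each is of degree $<i$.

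A small imprecision: these terms have weight at most $i$, not exactly $i$, since an element of $\mf g_l$ with $l<i$ can have a nonzero $\nu_i$-component. This only improves the count, as you anticipate in your second paragraph; that quasi-subadditivity paragraph is otherwise unused.

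As for what each route buys: the paper's is a one-line appeal to a deep result that also contains the harder reverse inequality, which is not needed here. Yours is elementary and self-contained, using only the BCH formula and $[\mf g_a,\mf g_b]\subset\mf g_{a+b}$. It also makes explicit the passage from graded sup-norms to the word norm on $\mf g/\mf g_{i+1}$, by taking the $\nu_j$ as $\ZZ$-complements with a compatible basis. This is possible because each $\mf g_j/\mf g_{j+1}$ is torsion-free.
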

	\begin{proof}
		By \cite{guivarch1973croissance}, the norm $\normg{ \cdot}$ is bilipschitz to the Guivarch norm $\gnorm{\cdot}$. The statement follows from the definition of the Guivarch norm.
	\end{proof}
	\begin{lemma}\label{prop:NormOfCommut}
		For any $a\in G$ the linear map $[a,\_]$ has operator norm (with respect to $\normr{\_}$) at most $C\normg{a}^{k-1}$ for some uniform constant $C$.
	\end{lemma}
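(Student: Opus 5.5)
We want to show that for $a\in G$ the linear map $[a,\_]:\mf g\to\mf g$ has operator norm at most $C\normg{a}^{k-1}$ with respect to $\normr{\_}$. The approach is to decompose $a$ along a fixed basis adapted to the adaptive lower central series and bound each component's contribution using \cref{prop:NormCorespondence}.

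First fix a basis $S'$ of $\mf g$ adapted to the adaptive lower central series. That is, $S'=\bigcup_i S'_i$, where the images of $S'_i$ form a basis of $\mf g_i/\mf g_{i+1}$. This is harmless, since changing the basis only changes $\normr{\_}$, and hence operator norms, by a uniform constant (\cref{prop:normAreEquiv}). Write $a=\sum_{i=1}^k a_i$, where $a_i$ is the part of $a$ supported on $S'_i$. In particular $a_i\in\mf g_i$.

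Next bound the coordinates. The coordinates of $a_i$ are coordinates of the image of $a$ in $\mf g/\mf g_{i+1}$ with respect to the basis induced by $S'_1\cup\dots\cup S'_i$. By \cref{prop:NormCorespondence}, these coordinates are bounded by $C\normg{a}^{i}$; alternatively one may use the Guivarch norm directly, where $\norm{a_i}\le C\gnorm{a}^i$. Now fix $s\in S'$. Since $[\mf g_i,\mf g]\subset\mf g_{i+1}$, we have $[a_i,s]=0$ whenever $i\ge k$, because $\mf g_{k+1}=0$. Hence
$$[a,s]=\sum_{i=1}^{k-1}[a_i,s].$$
Each $[\_,s]$ is a fixed linear map, so $\normr{[a_i,s]}\le C'\norm{a_i}\le C''\normg{a}^{i}\le C''\normg{a}^{k-1}$ for $i\le k-1$, using $\normg{a}\ge1$ when $a\neq e$. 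The case $a=e$ is trivial since then $[a,\_]=0$.

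Finally, by linearity, for $x=\sum_s x_s s$ we get $\normr{[a,x]}\le\sum_s|x_s|\,\normr{[a,s]}\le C\normg{a}^{k-1}\normr{x}$. Here $\normr{\_}$ is the $\ell^1$ word norm in the basis $S'$, so $\sum_s|x_s|=\normr{x}$.

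The only delicate point is the step using \cref{prop:NormCorespondence}. We need it to control the coordinates of $a$ in layer $i$ rather than merely a quotient norm. This is where the adapted basis is required: the quotient norm in $\mf g/\mf g_{i+1}$ dominates the $\mf g_i/\mf g_{i+1}$ coordinates up to a constant, and these coincide with the coordinates of $a_i$. A further observation is that the top layer $a_k$, which has the largest norm $\normg{a}^k$, is central and therefore does not contribute. This is exactly why the exponent is $k-1$ rather than $k$.
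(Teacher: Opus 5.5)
Your proof is correct and is essentially the paper's argument: the paper also discards the central layer $\mf g_k$ and invokes \cref{prop:NormCorespondence}, but only once at level $i=k-1$, writing $a+\mf g_k=\sum_{s\in S'}a_s s+\mf g_k$ with $\sum_s\abs{a_s}\le C'\norm{a}_{S}^{k-1}$ and concluding $[a,\_]=\sum_s a_s[s,\_]$. Your layer-by-layer bound in an adapted basis is a more detailed version of the same step; the adapted basis is not actually needed, since any lift of $a+\mf g_k$ of small norm suffices.
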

	\begin{proof}
		By \cref{prop:NormCorespondence}, $a+\mf g_{k}$ can be written as a sum $$
		\sum_{s\in S^\prime}a_ss+\mf g_{k}
		$$
		where $\sum_{s \in S^\prime} \vert a_s\vert \leq C'\normg{a}^{k-1}$ for some uniform constant $C'$. As $\mf g$ is $k$-step nilpotent, we have that the linear map $[a,\_]$ can thus be rewritten as $$
		\sum_{s\in S^\prime}a_s[s,\_].
		$$
		Let $C''$ be the maximum of the norms of the maps $[s,\_]$ where $s\in S^\prime$, then it follows that the norm of $[a,\_]$ is bounded above by $$\sum_{s\in S^\prime} \vert a_s \vert C''\leq C''C\normg{a}^{k-1}$$
	\end{proof}
	\begin{corollary}\label{prop:DetComutBound}
		Take notations as in \Cref{prop:bch}, then there exists some constant $C$ such that for $a\in G$ arbitrary holds that $\det([a,\mf g])\leq C\normg{a}^{(k-1)h_2(G)}$
	\end{corollary}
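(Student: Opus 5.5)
Since $[a,\mf g]$ is the image of the linear map $[a,\_]:\mf g\to\mf g$, it lies in $\mf g_2$. Its isolator $\sqrt{[a,\mf g]}$ is an isolated submodule of $\mf g_2$, which has rank $h_2(G)$. The plan is to bound $\det([a,\mf g])$ using \cref{prop:SubspaceNormToIndex}. This requires a generating set of $[a,\mf g]$ whose elements have small norm. The natural choice is the images $[a,s']$ for $s'\in S'$, since $S'$ is a basis of $\mf g$ and $[a,\_]$ is $\ZZ$-linear. By \cref{prop:NormOfCommut}, each $[a,s']$ has $\normr{\cdot}$-norm at most $C\normg{a}^{k-1}\normr{s'}$, and $\normr{s'}=1$. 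Hence $\norm{[a,\mf g]}_{S'}\le C\normg{a}^{k-1}$.

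We cannot apply \cref{prop:SubspaceNormToIndex} in $\mf g$ directly, because that would give exponent $(k-1)h(G)$ rather than $(k-1)h_2(G)$. Instead we work inside the free module $\mf g_2$. Since $\mf g_2$ is isolated, it is a direct summand of $\mf g$, so we may choose a basis $T$ of $\mf g_2$ and extend it to a basis of $\mf g$. By \cref{prop:normAreEquiv}, the norm with respect to the extended basis is comparable to $\normr{\cdot}$. On $\mf g_2$ this norm coincides with the $T$-norm, up to a constant.

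It follows that $\norm{[a,\mf g]}_T\le C'\normg{a}^{k-1}$, with $C'$ uniform. The determinant of $[a,\mf g]$ computed in $\mf g$ equals the one computed in $\mf g_2$, because $\mf g_2$ is isolated: $\sqrt[\mf g]{V}=\sqrt[\mf g_2]{V}$ for $V\subset\mf g_2$. Applying \cref{prop:SubspaceNormToIndex} in $\mf g_2$, which has rank $h(\mf g_2)=h_2(G)$, then gives $\det([a,\mf g])\le C''(C'\normg{a}^{k-1})^{h_2(G)}$. This is of the required form.

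The only subtle points are these. First, $h(\mf g_2)=h(G_2)$, which holds since $\log$ identifies $G_2$ with a finite-index subgroup of $\mf g_2$ under the BCH correspondence. Second, the case $a=e$, where $[a,\mf g]=0$ and the bound is trivial, since the determinant is $1$. I expect no real obstacle beyond checking that the constants are uniform in $a$.
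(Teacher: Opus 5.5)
Your proposal is correct and follows the paper's argument. The paper's proof is the single line ``combine \cref{prop:NormOfCommut} with \cref{prop:SubspaceNormToIndex}'', and you supply the step it leaves implicit: bound the generators $[a,s']$, then apply \cref{prop:SubspaceNormToIndex} inside the isolated direct summand $\mf g_2$ of rank $h_2(G)$ rather than in $\mf g$, which is where the exponent $(k-1)h_2(G)$ comes from. One small correction: for $a=e$ the right-hand side is $0$ while $\det(0)=1$, so that case is not ``trivial''; the bound should be read for $\|a\|_S\geq 1$ (or with $\max\{1,\|a\|_S\}$), just as \cref{prop:SubspaceNormToIndex} tacitly needs $n\geq 1$.
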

	\begin{proof}
		This follows by combining \cref{prop:NormOfCommut} with \cref{prop:SubspaceNormToIndex}.
	\end{proof}
	\cref{prop:ConjTranslation} suggests that we need to separate the conjugacy classes of $a$ and $b$ when $a \ast G_{k}=b \ast G_{k}$. The following lemmas help us reduce to this case.

	\begin{lemma}\label{ACReduction}
		Let $G$ be a torsion-free finitely generated nilpotent group of nilpotency class $k$ with generating set $S$. Then there exists some polynomial $p$ such that whenever $a,b\in G$ are such that $\normg{a},\normg{b}\leq n$ and such that $a G_{k}\sim b G_{k}$, then there exists some $c\in G_k$ such that $\normg{c}\leq p(n)$ and such that $ac\sim b$.
	\end{lemma}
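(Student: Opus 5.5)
Since $aG_k\sim bG_k$ in $G/G_k$, there is some $g\in G$ with $gag^{-1}G_k=bG_k$. Then $c:=(gag^{-1})^{-1}b\in G_k$, and $b=(gag^{-1})c$. Because $G_k$ is central in $G$ (as $G$ is torsion-free of class $k$, so $[G_k,G]\subseteq \sqrt[G]{\gamma_{k+1}G}=1$), conjugating back gives $g^{-1}bg=a\,c$, where we use $g^{-1}cg=c$. Hence $ac\sim b$. The only real issue is to produce such a $g$ of polynomially bounded norm, because then $\normg{c}\le 2\normg{g}+\normg{a}+\normg{b}$ in word length. Even a crude estimate suffices: $c$ is central in $G_k$ and $\normg{c}$ is bounded linearly by $\normg{g}+n$. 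The claim therefore reduces to the following: in the torsion-free nilpotent group $G/G_k$ (of class at most $k-1$), the conjugator between conjugate elements of norm at most $n$ can be chosen of norm at most $p_0(n)$ for some polynomial $p_0$.

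I would prove this reduced statement by induction on the nilpotency class, using the adaptive lower central series. Write $\bar G=G/G_k$ and suppose $\bar a\sim\bar b$ there. Pass to $\bar G/\bar G_{k-1}$, which is of smaller class. By induction, there is a conjugator $g_1$ of polynomial norm with $g_1\bar a g_1^{-1}\in \bar b\,\bar G_{k-1}$. Then $\bar b$ and $\bar a':=g_1\bar ag_1^{-1}$ are conjugate and differ by an element $d\in\bar G_{k-1}$, which is central in $\bar G$ and has polynomial norm. By \cref{prop:redefineconj}-type reasoning, which holds in this central situation (the map $h\mapsto\{\bar a',h\}$ modulo the appropriate term is a homomorphism on the relevant quotient), it remains to find $h$ with $\{\bar a',h\}=d$ of polynomial norm. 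Actually, once $\bar G_{k-1}$ is central it is cleaner to work in $\mf g$ via \cref{prop:ConjTranslation} with $I=0$. The condition becomes $[\log \bar a',y]=d$. This is a linear equation over $\ZZ$ whose map has operator norm polynomial in $n$ by \cref{prop:NormOfCommut}, and whose right-hand side has polynomially bounded norm by \cref{prop:NormCorespondence}. A solution exists by hypothesis, though a priori it is not obviously in $G$ or small. I would handle membership in $G$ by allowing $y$ to be replaced by $y+\ker[\log\bar a',\_]$. The standard fact that an integer linear system $Ax=d$ with a solution has one of norm bounded polynomially in $\norm{A}$ and $\norm{d}$ (via Cramer's rule / Hermite normal form bounds, in the spirit of \cref{prop:SubspaceNormToIndex}) then yields a small solution. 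Converting back with \cref{prop:GinvIdeal} and \cref{prop:NormCorespondence} (Guivarch norm versus word norm) gives polynomial word norm.

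The main obstacle is that the conjugator found at one level is only a solution modulo higher terms. The correction step therefore needs the commutator relation to be linear, exactly as in the central case. Deeper in the series, conjugating by $h\in \bar G$ changes $\bar a'$ by $\{\,h,\bar a'\}$ plus higher-order terms that interfere with the induction. I would circumvent this by only correcting in the last nontrivial (central) layer at each induction step. Then at each stage the correction is a genuinely linear problem in a central subgroup, and the errors introduced fall into the already-handled lower layers. The degree of $p$ grows with $k$, but only polynomiality is claimed. If this bookkeeping fails, a fallback is to quote the known polynomial bound on conjugator length in finitely generated nilpotent groups and deduce the lemma directly from the first paragraph.
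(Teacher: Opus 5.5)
Your first paragraph is the paper's proof. The paper takes a conjugator of $aG_k$ and $bG_k$ in $G/G_k$ of norm at most $q(n)$, lifts it to a representative of the same word norm, and sets $c=gbg^{-1}a^{-1}\in G_k$, which gives $\|c\|_S\le 2n+2q(n)$.

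The only difference is where the polynomial conjugator in $G/G_k$ comes from. The paper simply cites \cite[Lemma 6.2]{dere2019effective}, which is your fallback. You instead sketch a self-contained induction on the class that corrects only in the central last layer. That induction is sound, and it is the Lie-ring version that makes it work. At the group level, $h\mapsto\{h,\bar a'\}$ is a homomorphism only on the preimage of the centralizer of $\bar a'\bar G_{k-1}$, so you would need small generators of that subgroup. By contrast, \cref{prop:ConjTranslation} with $I=0$ turns the condition into the linear equation $[y,\log\bar a']=\log d$ on all of $\bar{\mf g}$. Its coefficients and right-hand side are polynomially bounded by \cref{prop:NormOfCommut} and \cref{prop:NormCorespondence}.

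To finish, you need two standard facts of integer linear algebra that are not among the paper's lemmas:
\begin{itemize}
\item a solvable integer system $Ax=d$ has an integer solution polynomially bounded in the entries of $A$ and $d$;
\item the kernel lattice of $A$ has a basis $w_1,\dots,w_r$ of polynomially bounded norm.
\end{itemize}
The second is what your phrase ``replace $y$ by $y+\ker$'' actually requires. Let $y_0=\log h_0$ come from a genuine conjugator and let $y_1$ be a small solution. Write $y_0-y_1=\sum_i c_iw_i$ and put $y=y_1+\sum_i r_iw_i$ with $0\le r_i<m$ and $r_i\equiv c_i \bmod m$, where $m$ is the constant of \cref{prop:bch} for $\bar G$. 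Then $y$ is small and $y-y_0\in m\bar{\mf g}$, so $y\in\log\bar G$ by \cref{prop:GinvIdeal}.

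Your route keeps everything inside the paper's Lie-ring framework at the cost of this extra linear algebra; the paper's route is a two-line citation. Either way only polynomiality of $p$ matters, because $\|c\|$ enters the proof of \cref{prop:AlgUpperBound} only through $\log p(n)$. So the growth of $\deg p$ with $k$ in your induction is harmless.
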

	\begin{proof}

	By \cite[Lemma 6.2]{dere2019effective} with $\varphi$ conjugation with $a$ on $\frac{G}{G_k}$, there exists some $gG_{k}\in \frac{G}{G_{k}}$ such that $aG_{k}=gbg\inv G_{k}$ and such that $\normg {gG_{k}}\leq q(n)$ for some polynomial $q(n)$ not depending on $g,a,b,n$. Take now $g$ a representative of $gG_{k}$ such that $\normg{g}=\normg{gG_{k}}$. Take then $c= gbg\inv a\inv$. Then $c$ has the desired properties for $p=2n+2q(n)$.
	\end{proof}
	The final ingredient before we demonstrate the upper bound is the following.
	\begin{lemma}\label{prop:MinQuotientCyclicCenter}
		Let $G$ be a group such that $Z(G)$ is finitely generated and let $a\in G,c\in Z(G)$ such that $a\not\sim ac$, then there exists a quotient $\varphi:G\rightarrow Q$ such that $\varphi(a)\not\sim \varphi(ac)$ and such that $\varphi(Z(G))$ is cyclic.
	\end{lemma}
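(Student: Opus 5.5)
The plan is to encode the conjugates of $a$ of the form $ax$ with $x$ central as a subgroup of $Z(G)$. Then I enlarge this subgroup as far as possible while still avoiding $c$, and quotient by the result. Concretely, define
\[
C_a=\set{x\in Z(G)\mid a\sim ax}.
\]

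The first step is to check that $C_a$ is a subgroup of $Z(G)$.
\begin{itemize}
\item It contains $1$.
\item If $gag\inv=ax$ and $hah\inv=ay$ with $x,y$ central, then $(hg)a(hg)\inv=h(ax)h\inv=ayx$, so $xy\in C_a$.
\item From $gag\inv=ax$ we get $a=g\inv a g\,x$, hence $g\inv ag=ax\inv$, so $x\inv\in C_a$.
\end{itemize}
The hypothesis $a\not\sim ac$ says exactly that $c\notin C_a$.

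The second step is to choose a subgroup $D$ with $C_a\subseteq D\subseteq Z(G)$ that is maximal with respect to $c\notin D$. It exists by Zorn's lemma, since a union of a chain of subgroups avoiding $c$ still avoids $c$. Because $D$ is central, it is normal in $G$. Let $\varphi:G\to G/D$ be the quotient map.

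The third step is non-conjugacy. Suppose $\varphi(a)\sim\varphi(ac)$. Then there are $g\in G$ and $d\in D$ with $gag\inv=acd$. Since $cd$ is central, this gives $cd\in C_a\subseteq D$, hence $c\in D$, a contradiction. So $\varphi(a)\not\sim\varphi(ac)$.

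The last step, which I expect to be the only non-formal point, is showing that $\varphi(Z(G))=Z(G)/D$ is cyclic. Write $A=Z(G)/D$ and $\bar c$ for the image of $c$. By maximality of $D$, every nontrivial subgroup of $A$ contains $\bar c\neq 0$. The group $A$ is finitely generated abelian, since it is a quotient of $Z(G)$, so I argue from its structure as follows.
\begin{itemize}
\item $A$ has no torsion-free rank. A direct summand $\ZZ$ would contain the subgroups $N\ZZ$ for every $N$, and these all contain $\bar c$; but $\bigcap_N N\ZZ=0$, contradicting $\bar c\neq 0$.
\item So $A$ is finite. Two distinct primes dividing $|A|$ would give subgroups of coprime orders, whose intersection is trivial yet contains $\bar c$. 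Hence $A$ is a $p$-group.
\item Two independent cyclic summands of $A$ would likewise intersect trivially while both containing $\bar c$. Hence $A$ is a single cyclic summand.
\end{itemize}
Therefore $Z(G)/D$ is a finite cyclic $p$-group, which completes the proof.
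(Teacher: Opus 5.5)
Your proof is correct and follows the paper's argument. Your $C_a$ is exactly the central subgroup $Z(G)\cap[a,G]$ that the paper uses, and both arguments quotient by a subgroup of $Z(G)$ that contains it, avoids $c$, and has cyclic quotient. The only differences are these: you obtain that subgroup as a maximal one and deduce cyclicity from maximality, where the paper invokes the classification directly to project onto a suitable cyclic quotient. Zorn's lemma is unnecessary here, since the finitely generated abelian group $Z(G)$ satisfies the ascending chain condition on subgroups. You also check explicitly that $C_a$ is a subgroup and that non-conjugacy survives the quotient, which the paper leaves implicit.
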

	\begin{proof}
		Let $[a,G]$ denote the set $\{[a,g]\mid g\in G\}$. Notice that $[a,G]\cap Z(G)$ is a central subgroup, and thus normal. Consider the finitely generated abelian group $A = \frac{Z(G)}{Z(G)\cap[a,G]}$. As $c$ is a non-trivial element in this group, it follows by the classification of finitely generated abelian groups that there exists a subgroup $N\normal A$ such that $c$ is non-trivial in the cyclic group $A/N$. The inverse image $\tilde{N}$ of $N$ in $Z(G)$ is a normal subgroup of $G$, and hence  the quotient $Q=\frac{G}{\tilde{N}}$ has the desired properties.
	\end{proof}
	We are now ready to prove \cref{prop:AlgUpperBound}
	\begin{proof}[\cref{prop:AlgUpperBound}]
		Let $a,b\in G$ be non-conjugate. We may assume that $aG_{k}\sim bG_{k}$, indeed if this is not the case, we may replace $G$ by $\frac{G}{G_{i+1}}$ where $i$ is minimal such that $aG_{i}\sim bG_{i}$.
		By \cref{ACReduction}, we have that $b$ is conjugate to $ac$ for some $c\in G_{k}$ where $\normg{c}\leq p\left(\max\set{\normg{a},\normg{b}}\right)$ for some polynomial $p$, independent of $a$ and $b$.
		As $a$ and $b$ are non-conjugate, it follows that $a$ and$ac$ are also non-conjugate and thus that $c$ is not of the form $\{a,g\}$ for $g\in G$. By \cref{prop:ConjTranslation} we know that $c\neq[a,g]$ for all $g \in G$.
		
        There are now two possibilities, depending on whether $c$ lies in $[a,\mf g]$. In both cases we show that there exists some ideal $I = q \mf g$ such that $c\notin [a,G]+I$.
        
	For the first case, assume that $c \in [a,\mf g]$ and fix $g_0\in \mf g\backslash G$ with $c=[a,g_0]$. Let $m$ be the constant from \cref{prop:GinvIdeal} and let $d$ be the determinant of $[a,\mf g]$. We claim that $q = md$ works. Indeed, if there would be a $g$ such that $[a,g] \in c + dm \mf g$, then $[a,g-g_0]\in dm\mf g$. As $\sqrt{[a,\mf g]}\cap d\mf g\subset [a,\mf g]$, we have that $[a,g-g_0]\in [a,m\mf g]$ or thus that there exists some $g'\in \mf g$ with $[a,g^\prime] = 0$ such that $g-(g_0+g')\in m\mf g \subset G$. It follows by \cref{prop:GinvIdeal} that $g_0+g'\in G$. But $[a,g_0+g']=[a,g_0]+[a,g']=c+0$, contradicting the fact that no $g\in G$ exists such that $[a,g]=c$.
		
		For the second case, by \cref{prop:AbelianSubgroupSeparability} there exists a quotient of $\ZZ$-modules $\varphi:\mf g\rightarrow Q$ such that $\varphi(c)\notin\varphi([a,\mf g])$ and such that $Q$ is bounded above by $$
		\abs{Q}\leq\max\set{\det([a,\mf g]), C\log(p(\max\set{\normg{a},\normg{b}}))}.
		$$
		where $C$ is some constant only depending on $\mf g$.
The kernel contains the ideal $q\mf g$ where $$
		q\leq\max\set{m\det([a,\mf g]), Cm\log(p(\max\set{\normg{a},\normg{b}}))},
		$$ and thus the ideal $I = q \mf g$ satisfies the claim. Here the $m$ is added to make it a subset of $G$.

	By \cref{prop:IdealContainsStrong}  there exists a strong ideal $I^\prime \subset I$ with $[\mf g:I']\leq C_0[\mf g:I]$ for some uniform constant $C_0$ and  $$
		m\left(\mf g_{k}\cap I\right)=\mf g_{k}\cap I'
		$$
		where $m$ is the constant of \cref{prop:bch}.
		By \cref{prop:ConjTranslation}, it follows that there is no $g\in G$ such that $\{a,g\}=c+I'$. By \cref{prop:CosetsEquivalent} it thus follows that $\{a,g\} *I^\prime \neq c*I'$ for all $g \in G$, or hence $a*I'$ and $a*c*I'$ are non-conjugate.
		
		To bound the index of $I'$, note that $$
		C_0q^{h(G)}\leq C_0 \max\set{m\det([a,\mf g]), C m \log(p(\max\set{\normg{a},\normg{b}}))}^{h(G)}
		$$
		and $Z(G)/(Z(G)\cap I')$ is isomorphic to a subgroup of $\frac{\ZZ^{h(Z(G))}}{mq\ZZ^{h(Z(G))}}$ containing $\frac{m\ZZ^{h(Z(G))}}{mq\ZZ^{h(Z(G))}}$.
		By \cref{prop:MinQuotientCyclicCenter} it follows that there also exists normal subgroup $I''\supset I^\prime$ such that $a*I''$ and $b*I''$ are non-conjugate and such that $\frac{Z(G)}{Z(G)\cap I''}$ is cyclic. It follows that index of $I''$ can be further reduced to be bounded by $(mq)^{h(G)-h(Z(G))+1}$. The result follows by applying \cref{prop:DetComutBound}, leading to $$[G:I'']\preceq n^{(k-1)(h(G)-h(Z(G))+1)h(G_2)}$$ as logarithmic terms are always bounded by polynomials.

	\end{proof}
	\cref{sec:AlgBoundScharp} will demonstrate that the exponent in \cref{prop:AlgUpperBound} must be at least quadratic in the Hirsch length. It is unclear, however, how far this theorem is from being sharp in case of higher nilpotency classes.
	\begin{question}
		Is there a constant $C$, not dependent on the nilpotency class, such that for any torsion-free finitely generated nilpotent group $G$, we have $\conj_G(n)\preceq n^{C h(G)^2}$?
	\end{question}

	\section{Two-step nilpotent groups}\label{sec:groupsandLierings}

The main goal of this section is to show that, for torsion-free $2$-step nilpotent groups, conjugacy separability growth is essentially a Lie-ring invariant. This is in sharp contrast with the higher-step case, where the correspondence between groups and Lie rings only yields partial information as demonstrated above. By translating the problem completely into the setting of Lie rings, we obtain stronger upper bounds and are able to study their sharpness.
        

	
	\subsection{Conjugacy separability in Lie rings}

    In general, the methods from the previous section can only be used to produce upper bounds to $\conj_{G}$. However when the map $\log$ from \cref{prop:bch} is surjective, so when $(G,*)=(\mf g,*)$, then the relation is stronger. First we introduce a corresponding function on Lie rings.
	\begin{definition}\label{sepinLiering}
		Let $\mf g$ be a finitely generated torsion-free nilpotent Lie ring. For any $a,c\in \mf g$ with $c\notin [a,\mf g]$, define $\cd_{\mf g}$ as $$
		\cd_{\mf g}(a,c)=\min\{[\mf g:I]\mid I\normal \mf g,c\notin [a,\mf g]+I\}.
		$$  
	Define $\conj_\mf g: \NN \to \NN$ as $$
		\conj_{\mf g}(n)=\max\{\cd_{\mf g}(a,c)\mid c\notin [a,\mf g], \norm{a}_{\mathrm{g}},\norm{c}_{\mathrm{g}}\leq n\}
		$$
        for any $n \in \NN$.
	\end{definition}
	As before, the function $\conj_{\mf g}$ depends on the choice of Guivarch norm. However, as all these norms are bilipschitz, the resulting function $\conj_{\mf g}$ is  well-defined up to $\simeq$.

	When $G$ corresponds to a Lie ring, we have the following estimates.
	\begin{proposition}\label{prop:CdGroupToRing}
		Let $\mf g$ be a finitely generated $k$-step nilpotent Lie ring that is closed under $\ast$. For every $a\in \mf g$ and $c\in \mf g_{k}$ it holds that $ac\sim a$ if and only if $c\in [a,\mf g]$. Furthermore, if $ac\nsim a$, then $$
		\cd_{(\mf g,*)}(a,ac)\leq C\cd_{\mf g}(a,c)
		$$
		and
		$$
		\cd_{\mf g}(a,c)\leq C\cd_{(\mf g,*)}(a,ac)
		$$
		where $C$ is a uniform constant depending only on $\mf g$.
	\end{proposition}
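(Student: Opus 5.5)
We work in the group $G=(\mf g,*)$, so that $\log$ is the identity and \cref{prop:bch} applies with $m=1$ and $G=\mf g$. For the first claim, note that $c\in\mf g_k$ is central in both structures. Hence $a*c=a+c$, and $a*c\sim a$ holds iff there is $g$ with $\{g,a\}=c$, equivalently $\{a,g'\}=c$ for some $g'$ (take $g'=g^{-1}$ and use that $c$ is central). Applying \cref{prop:ConjTranslation} with $I=0$, the condition $\{a,g\}=c$ is equivalent to $[a,g]=c$. Since $G=\mf g$, this says exactly $c\in[a,\mf g]$.

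For the inequality $\cd_{(\mf g,*)}(a,ac)\le C\cd_{\mf g}(a,c)$, take an ideal $I$ of minimal index with $c\notin[a,\mf g]+I$. By \cref{prop:IdealContainsStrong} there is a strong ideal $I'\subset I$ of uniformly bounded index; we still have $c\notin[a,\mf g]+I'$. By \cref{prop:StrongIdealIsNormal}, $I'$ is a normal subgroup of $(\mf g,*)$. Suppose $a*c*I'$ were conjugate to $a*I'$ in the quotient. Then $\{a,g\}*I'=c*I'$ for some $g$, and by \cref{prop:CosetsEquivalent} this reads $\{a,g\}+I'=c+I'$. \cref{prop:ConjTranslation} then gives $[a,g]+I'=c+I'$, a contradiction. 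Finally, \cref{prop:IndexCorrespondence} bounds $[G:I']\le[\mf g:I']\le C[\mf g:I]$.

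For the reverse inequality, take a normal subgroup $N$ of minimal index separating $a$ and $a*c$. By \cref{prop:SubgroupContainsStrong} there is a strong ideal $I\subset N$ of bounded index, and $I$ still separates, since any conjugacy modulo $I$ would descend to $N$. Running the previous paragraph backwards via \cref{prop:CosetsEquivalent} and \cref{prop:ConjTranslation} yields $c\notin[a,\mf g]+I$. Then \cref{prop:IndexCorrespondence} gives $[\mf g:I]\le M[G:I]\le MC[G:N]$.

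The main obstacle I expect is bookkeeping: checking that \cref{prop:ConjTranslation} applies with $I$ only an ideal of $\mf g$, that $c\in\mf g_k$ is required there, and that passing to a smaller ideal preserves separation in each direction. The only genuinely non-formal inputs are the uniform index bounds of the two strong-ideal lemmas.
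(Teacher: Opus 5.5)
Your proposal is correct and follows the paper's proof essentially step for step. The equivalence comes from \cref{prop:ConjTranslation} with the trivial strong ideal. The two inequalities come from shrinking to a strong ideal of bounded index (\cref{prop:IdealContainsStrong}, resp.\ \cref{prop:SubgroupContainsStrong}), transporting (non-)conjugacy through \cref{prop:CosetsEquivalent} and \cref{prop:ConjTranslation}, and controlling indices with \cref{prop:IndexCorrespondence}. You are in fact slightly more explicit than the paper about the direction of conjugation and about why passing to the smaller strong ideal preserves separation.
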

	\begin{proof}
		Notice that $ac$ and $a$ are conjugate precisely when there exists some $g$ such that $c=\{a,g\}$. By \cref{prop:ConjTranslation} for the ideal $0$, the first part follows. 
        
        Let $C$ be the maximum of the constants from \cref{prop:IdealContainsStrong} and \cref{prop:SubgroupContainsStrong}.
		For the first inequality, let $I\normal \mf g$ be an ideal with $c\notin [a,\mf g]+I$ and $[\mf g: I] = \cd_{\mf g}(a,c)$. By \cref{prop:IdealContainsStrong} there exists a strong ideal $I' \subset I$  with $[\mf g : I^\prime] \leq C\cd_{\mf g}(a,c)$. This strong ideal is a normal subgroup by \cref{prop:StrongIdealIsNormal} and $c \notin \{a,\mf g\}+I'$ by \cref{prop:ConjTranslation}. which by \cref{prop:CosetsEquivalent} is equal to $\{a,\mf g\} \ast I'$. In particular $a \ast I'\nsim ac \ast I'$, where by \cref{prop:IndexCorrespondence} $I'$ as a normal subgroup is of index at most $C\cd_{\mf g}(a,c)$.
		
		The other direction is almost identical using \cref{prop:SubgroupContainsStrong}.
	\end{proof}

	The correspondence between groups and Lie rings is stronger in the $2$-step case because we can always find a group with the same conjugacy growth function where the map $\log$ from \cref{prop:bch} is surjective.
	\begin{proposition}\label{prop:centralextension}
		Let $G\normal H$ be finitely generated torsion-free nilpotent groups. Suppose that $\frac{H}{G}$ is finite and that $H=GZ(H)$, then $\conj_{G}\simeq \conj_{H}$.
	\end{proposition}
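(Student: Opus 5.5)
We will prove the two inequalities $\conj_G\preceq\conj_H$ and $\conj_H\preceq\conj_G$ separately. The key structural fact is that $H=GZ(H)$ makes conjugation in $H$ coincide with conjugation in $G$ on elements of $G$. Indeed, if $h=gz$ with $g\in G$ and $z\in Z(H)$, then $hxh\inv=gxg\inv$. Hence for $x,y\in G$ we have $x\sim_G y$ if and only if $x\sim_H y$. We also fix generating sets with $S_G\subset S_H$, taking $S_H=S_G\cup T$ for a finite set $T$ of coset representatives of $G$ in $H$, chosen from $Z(H)$. Since $G$ has finite index in $H$, the word norms are comparable: $\norm{x}_{S_H}\le\norm{x}_{S_G}\le C\norm{x}_{S_H}$ for $x\in G$, because finite-index subgroups are undistorted.

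\emph{Step 1: $\conj_G\preceq\conj_H$.} Let $x,y\in G$ be non-conjugate in $G$ with $\norm{x}_{S_G},\norm{y}_{S_G}\le n$. By the observation above they are non-conjugate in $H$, and their $S_H$-norm is at most $n$. So there is $N\normal H$ with $[H:N]\le\conj_H(n)$ such that $xN\not\sim yN$ in $H/N$. Put $M=N\cap G$, which is normal in $G$ and satisfies $[G:M]\le[H:N]$. If $xM\sim yM$ in $G/M$, then $gxg\inv y\inv\in M\subset N$ for some $g\in G$, which would make $xN\sim yN$. This is a contradiction, so $\cd_G(x,y)\le\conj_H(n)$.

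\emph{Step 2: $\conj_H\preceq\conj_G$.} Let $x,y\in H$ be non-conjugate with $S_H$-norm at most $n$. Write $x=x_0z_1$ and $y=y_0z_2$ with $z_i\in T\subset Z(H)$ and $x_0,y_0\in G$, so that $\norm{x_0}_{S_G},\norm{y_0}_{S_G}\le C(n+1)$. There are two cases.
\begin{itemize}
\item If $xG\neq yG$, then since $H/G$ is abelian (being a quotient of $Z(H)$) we can take $N$ to be the intersection of the normal core of $G$ with $H$, which is simply $G$ itself since $G\normal H$. Its index is the constant $[H:G]$, and $x,y$ are non-conjugate modulo it.
\item If $xG=yG$, we may take $z_1=z_2=z$, and then $x\sim y$ if and only if $x_0\sim y_0$, since $z$ is central. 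Since $G$ is conjugacy separable, there is $M\normal G$ of index at most $\conj_G(C(n+1))$ with $x_0M\not\sim y_0M$. The difficulty is that $M$ need not be normal in $H$. However, conjugation by $h=gz$ acts on $G$ as conjugation by $g$, so every normal subgroup of $G$ is automatically normal in $H$. The only remaining obstruction is that $x_0M\not\sim y_0M$ in $G/M$ must imply $xM\not\sim yM$ in $H/M$. Suppose $hxh\inv y\inv\in M$ with $h=gz'$. Then this element equals $gx_0g\inv y_0\inv$, so $x_0M\sim y_0M$ in $G/M$, which is a contradiction. The index is $[H:M]=[H:G][G:M]$, bounded by a constant times $\conj_G(C(n+1))$.
\end{itemize}

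Combining the two steps gives $\conj_G\simeq\conj_H$. In the first case of Step 2 the non-conjugacy modulo $G$ is immediate: $H/G$ is abelian, so conjugacy there is equality, and $xG\ne yG$.

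The main point needing care is the claim that normal subgroups of $G$ remain normal in $H$ and that conjugacy is unaffected. Both rely entirely on $H=GZ(H)$, and I expect this to be the crux of the argument. Everything else is bookkeeping with comparable norms via \cref{prop:normAreEquiv}-style estimates.
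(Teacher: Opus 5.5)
Your proof is correct and follows essentially the same route as the paper. For $\conj_G\preceq\conj_H$ you restrict a separating quotient of $H$ to $G$. For the reverse inequality you shift by central coset representatives, and then either use $H/G$ when the cosets differ, or observe that a normal subgroup of $G$ is normal in $H$ and that conjugacy modulo it in $H$ reduces to conjugacy in $G$, at the cost of the constant factor $[H:G]$.
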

	\begin{proof}
		Fix generating sets $S_1, S_2$ for the groups $G, H$. As the groups $G$ and $H$ are commensurable, there exists some constant $C>0$ such that for all $g\in G$, $$
		\frac{1}{C}\norm{g}_{S_1}\leq \norm{g}_{S_2}\leq C\norm{g}_{S_1}.
		$$ 
		
		To show that $\conj_{G}\preceq \conj_{H}$, take $g_1,g_2\in G$ that are non-conjugate with $\norm{g_1}_{S_1},\norm{g_2}_{S_1}\leq n$. Notice that as conjugating with elements of $Z(H)$ leaves $g_1,g_2$ invariant, $g_1$ and $g_2$ are also non-conjugate in $H$. Let $\varphi:H\rightarrow Q$ be a finite quotient of size $\cd_{H}(g_1,g_2)$ such that $\varphi(g_1)\nsim\varphi(g_2)$, then $\varphi$ restricts to a quotient $\varphi\mid_{G}:G\rightarrow \varphi(G)$ of size bounded by $\cd_{H}(g_1,g_2)$. It still holds that $\varphi(g_1)\nsim\varphi(g_2)$ in $\varphi(G)$ and thus $\cd_{G}(g_1,g_2)\leq \cd_{H}(g_1,g_2)\leq \conj_{H,S_2}(Cn)$, leading to the statement.
		
		For the other bound, take $g_1,g_2\in H$ non-conjugate with $\norm{g_1}_{S_2},\norm{g_2}_{S_2}\leq n$. We can always assume that $g_1\in G$. Indeed there exists some $c\in Z(H)$ such that $g_1c\in G$. Furthermore, $g_1N\sim g_2N$ if and only if $g_1cN\sim g_2cN$ for any normal subgroup $N$. Notice that such a $c$ can be chosen from a fixed finite set of representatives of $\frac{H}{G}$, and thus can $c$ be chosen of uniformly bounded norm. 
		
		We now distinguish $2$ cases, namely $g_2\notin G$ or $g_2\in G$. In the first case, consider $\varphi:H\rightarrow \frac{H}{G}$. As $\varphi(g_1)$ is trivial and $\varphi(g_2)$ is not, they are non-conjugate. In this case we thus have $\cd_{H}(g_1,g_2)\leq \abs{\frac{H}{G}}$. For the other case, take $g_2\in G$ and let $\varphi:G\rightarrow Q$ be a finite quotient with kernel $N$ such that $\varphi(g_1)\nsim\varphi(g_2)$. Such a quotient can be chosen of size at most $\cd_{G}(g_1,g_2)$.
		Since $H = G Z(H)$, we have that $N$ is also a normal subgroup of $H$. Denote $\varphi':H\rightarrow \frac{H}{N}$. Then $Q$ is a subgroup of $\frac{H}{N}$ and $\frac{H}{N}=QZ(\frac{H}{N})$. In particular, $\varphi(g_1)$ and $\varphi(g_2)$ are conjugate in $Q$ if and only if they are in $\frac{H}{N}$. The index of $[H:N]$ is given by $[H:G][G:N]$ and thus is $\cd_{H}(g_1,g_2)\leq \abs{\frac{H}{G}}\cd_{G}(g_1,g_2)\leq \abs{\frac{H}{G}}\conj_{G,S_2}(Cn)$.
	\end{proof}
	\begin{lemma}\label{prop:ExistsLogSurjective}
		Let $G$ be a torsion-free $2$-step nilpotent group, then there exists some Lie ring $\mf g$ satisfying \cref{prop:bch} such that $\mf g=G*Z(\mf g)$.
	\end{lemma}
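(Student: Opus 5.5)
We start from a Lie ring $\mf g_0$ attached to $G$ by \cref{prop:bch}, so that $\log(G)\subset \mf g_0$ and $m\mf g_0\subset \log(G)$. Since $G$ is $2$-step nilpotent, the Baker--Campbell--Hausdorff formula reduces to $x*y=x+y+\tfrac12[x,y]$ and $\{x,y\}=[x,y]$. The idea is to shrink $\mf g_0$ away from the center, so that every element of the new ring differs from an element of $G$ by a central element of the ring.

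For the construction, choose a complement $\nu$ of $Z(\mf g_0)$ in $\mf g_0$, so that $\mf g_0=\nu\oplus Z(\mf g_0)$. Note that $Z(\mf g_0)\supseteq \mf g_{0,2}$ because the ring is $2$-step. Consider the projection $\pi:\mf g_0\to \mf g_0/Z(\mf g_0)$, and define
\[
\mf g=\log(G)+Z(\mf g_0),
\]
a priori only as a set. The key steps, in order, are the following.

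First, show that $\mf g$ is an additive subgroup. For $g,h\in\log(G)$ we have $g+h=g*h-\tfrac12[g,h]$, and $[g,h]\in[\mf g_0,\mf g_0]\subset Z(\mf g_0)$. So sums of elements of $\log(G)$ stay in $\mf g$, and adding central elements is harmless.

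Second, show that $\mf g$ is a Lie subring, closed under $*$ and $\{\cdot,\cdot\}$. Brackets of elements of $\mf g$ equal brackets of the $\log(G)$ parts, since central parts drop out. These brackets lie in $[\mf g_0,\mf g_0]\subseteq Z(\mf g_0)\subseteq \mf g$. Hence $\mf g$ is finitely generated and torsion-free, because it sits inside $\mf g_0$ with $m\mf g_0\subset\mf g$.

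Third, verify the conditions of \cref{prop:bch} for $\mf g$. We must check two things:
\begin{itemize}
\item the constants $q_v=\tfrac12$ and $r_v$ give operations $\tfrac12[\cdot,\cdot]$ that are internal in $\mf g$;
\item $m\mf g\subset \log(G)$ still holds after possibly enlarging $m$.
\end{itemize}

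The first of these is the main obstacle. Adjoining all of $Z(\mf g_0)$ is fine for the bracket. However, $\tfrac12[x,y]$ need not lie in $\mf g$ unless the center contains it. Here is how I would handle it. Since $\mf g_0$ already satisfies \cref{prop:bch}, $\tfrac12[x,y]\in\mf g_0$ for $x,y\in\mf g_0$. Moreover $[\cdot,\cdot]$ on $\mf g$ only depends on the $\log(G)$ components, which lie in $\mf g_0$. So $\tfrac12[x,y]$ lies in $\mf g_0$ and is central, hence lies in $Z(\mf g_0)\subseteq\mf g$.

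For $m\mf g\subset \log(G)$, take $x=g+z$ with $g\in\log(G)$ and $z\in Z(\mf g_0)$. Then $m$ times this element can be rewritten using $mz\in\log(G)$. Also $mg$ differs from $g^{*m}$ by bracket terms of the form $\binom{m}{2}\tfrac12[g,g]=0$. So $mg=g^{*m}\in\log(G)$, and since $mz$ is central, $mg+mz=mg*mz\in\log(G)$.

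Finally, show that $\mf g=G*Z(\mf g)$. We have $Z(\mf g)\supseteq Z(\mf g_0)$: an element of $Z(\mf g_0)$ commutes with all of $\mf g_0\supseteq\mf g$. For $z$ central, $g+z=g*z$ because the bracket term vanishes. Therefore
\[
\mf g=\log(G)+Z(\mf g_0)=\log(G)*Z(\mf g_0)\subseteq G*Z(\mf g)\subseteq\mf g,
\]
which gives equality.
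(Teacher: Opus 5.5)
Your proof is correct, but your enlargement differs from the paper's. The paper takes the smaller ring $\mf g=\log(G)+\tfrac12[\log(G),\log(G)]$, whereas you adjoin the entire center $Z(\mf g_0)$ of the auxiliary ring. Both rest on the same two class-$2$ facts: $g+h=g*h-\tfrac12[g,h]$, and $g+z=g*z$ for central $z$. The final step $\mf g=G*Z(\mf g)$ is the same in both.

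The paper's choice is minimal and intrinsic. It depends only on $\log(G)$ and the bracket, not on $\mf g_0$. Moreover, $G$ has index a power of $2$ (at most $2^{h_2(G)}$) in $(\mf g,*)$, since twice any element of $\tfrac12[\log(G),\log(G)]$ already lies in $\log(G)$.

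Your choice depends on $\mf g_0$ and may be larger, but every verification is inherited. Any rational multiple of a bracket that is internal in $\mf g_0$ lies in the isolator of $\gamma_2\mf g_0$. That isolator is central by torsion-freeness, hence contained in $Z(\mf g_0)\subseteq\mf g$. So whatever internality \cref{prop:bch} demands of $\mf g_0$ passes to $\mf g$ automatically, whereas the paper's ring is tailored to the single relevant coefficient $\tfrac12$. Likewise $m\mf g\subseteq m\mf g_0\subseteq\log(G)$ is immediate, so your detour through $g^{*m}$ is unnecessary.

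Two small repairs. Your first step needs $\tfrac12[g,h]\in Z(\mf g_0)$, not merely $[g,h]\in Z(\mf g_0)$; you only justify this in your third step, so that observation should come first. You should also note that $-g\in\log(G)$, being the $*$-inverse of $g$, so that $\mf g$ is closed under negation.
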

	\begin{proof}
		Let $\mf g'$ be any Lie ring satisfying \cref{prop:bch} with corresponding map $\log: G\rightarrow \mf g'$. Take $\mf g=\log(G)+\frac{1}{2}[\log(G),\log(G)]$, then $\mf g$ is clearly a Lie subring of $\mf g'$ that is closed under $\ast$. The final condition is immediate from the construction.
	\end{proof}

	\begin{proposition}\label{prop:CSepGrowthEquiv}
		Let $G$ be a finitely generated torsion-free 2-step nilpotent group with $\mf g$ as in \cref{prop:bch} such that $\log:G\rightarrow\mf g$ is surjective, then $\conj_G\simeq\conj_\mf g$
	\end{proposition}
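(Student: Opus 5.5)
We prove the two inequalities $\conj_G\preceq\conj_{\mf g}$ and $\conj_{\mf g}\preceq\conj_G$ by reducing to the pairs $(a,ac)$ with $c\in\mf g_2$, where \cref{prop:CdGroupToRing} applies with $k=2$. Since $\log$ is surjective, we identify $G=(\mf g,*)$. We fix a generating set $S$ of $G$ and a Guivarch norm on $\mf g$; by \cite{guivarch1973croissance} these are bilipschitz on $G=\mf g$. Both functions are non-decreasing, so it suffices to compare them up to constants in the argument.

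\emph{Step 1: $\conj_{\mf g}\preceq\conj_G$.} Let $a,c\in\mf g$ with $c\notin[a,\mf g]$ and $\gnorm{a},\gnorm{c}\leq n$. Since $[a,\mf g]\subset\mf g_2$, I would first dispose of $c\notin\mf g_2$. In that case $c\notin\mf g_2+[a,\mf g]$, and \cref{prop:AbelianSubgroupSeparability} applied to the free module $\mf g/\mf g_2$ separates the image of $c$ from $0$. This gives an ideal $I\supset\mf g_2$ of index $\preceq\log n$, which is bounded by $\conj_G$ since $\conj_G\succeq n^3$ or at least $\succeq\log n$ for $G$ infinite. For $c\in\mf g_2$, \cref{prop:CdGroupToRing} gives $a\not\sim a*c$ and $\cd_{\mf g}(a,c)\leq C\cd_G(a,a*c)$. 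It remains to bound the word norm: in the $2$-step case $\gnorm{c}\leq n$ means $c$ is central with additive norm $\leq n^2$, so $\normg{c}\preceq n$ and $\normg{a*c}\leq\normg{a}+\normg{c}\preceq n$. Hence $\cd_{\mf g}(a,c)\leq C\conj_G(C'n)$.

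\emph{Step 2: $\conj_G\preceq\conj_{\mf g}$.} Let $a,b\in G$ be non-conjugate with $\normg{a},\normg{b}\leq n$. If $aG_2\not\sim bG_2$, i.e.\ $a,b$ have distinct images in the abelian group $G/G_2$, then a quotient of $G/G_2$ of order $\preceq\log n$ separates them by the abelian result of \cite{bou2010quantifying}. Otherwise, \cref{ACReduction} gives $c\in G_2=\mf g_2$ with $ac\sim b$ and $\normg{c}\leq p(n)$.

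Here lies the main obstacle: a polynomial $p$ would destroy the $\simeq$. In the $2$-step case I would instead argue directly. We have $b=a*c'$ with $c'=a^{-1}b\in\mf g_2$, since $aG_2=bG_2$ after conjugating we may take without reconjugating, as $G/G_2$ is abelian and conjugacy there is equality. Then $\normg{c'}\leq 2n$, so $\gnorm{c'}\preceq n$, and the additive $a$ has Guivarch norm $\preceq n$. By \cref{prop:CdGroupToRing}, $c'\notin[a,\mf g]$ and $\cd_G(a,b)\leq C\cd_{\mf g}(a,c')\leq C\conj_{\mf g}(C'n)$. Thus the only real care is checking that the reduction keeps linear norms, which holds because in class $2$ no conjugation is needed. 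Combining both steps, and absorbing the $\log$ terms using the lower bound $\conj\succeq n^3$ from \cite{dere2019effective} (or comparing directly when $G$ is abelian), yields $\conj_G\simeq\conj_{\mf g}$.
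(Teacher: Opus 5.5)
Your proposal is correct and follows the paper's proof essentially verbatim. In both directions you split according to whether the two elements differ modulo $G_2=\mf g_2$, which is handled by a logarithmic abelian quotient, or not; in the second case you apply \cref{prop:CdGroupToRing} directly to $c=a^{-1}b\in G_2$ with linear norm control, bypassing \cref{ACReduction} exactly as the paper does.

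The only point worth tightening is the absorption of the $\log$ term in the direction $\conj_G\preceq\conj_{\mf g}$. That step requires $\conj_{\mf g}\succeq\log n$, whereas the $n^3$ bound you cite concerns $\conj_G$. The needed bound follows either directly, by taking $a=0$ and $c=\mathrm{lcm}(1,\dots,m)z$ for a fixed $z\neq 0$, or by feeding $\conj_G\succeq n^3$ into your own Step 2 inequality $\conj_G(n)\le\max\{C\log n+C,\,C\conj_{\mf g}(Cn)\}$.
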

	\begin{proof}
		As usually we identify $G$ with $\log(G)$. First, note that both of these functions grow at least as $\log$, as they are bounded from below by the residual finiteness growth of their subgroups, and thus by $\mathrm{Rf}_{\ZZ}\simeq \log$.
		Furthermore, by \cite{guivarch1973croissance}, there exists a constant $C$ such that for any $g\in G$, $\frac{1}{C}\norm{g}_S\leq \norm{g}_{\mathrm{g}}\leq C\norm{g}_S$.
		
		Let $g_1,g_2\in G$ be non-conjugate such that $\norm{g_1}_S,\norm{g_2}_S\leq n$. We distinguish $2$ cases, first assume that $g_1G_2\neq g_2G_2$. In this case we find an abelian quotient (of groups) $\varphi: G\rightarrow Q$ of size at most $\mathrm{Rf}_{\frac{G}{G_2}}(2n)$ such that $\varphi(g_1)\neq \varphi(g_2)$. As $Q$ is abelian, we also have that $\varphi(g_1)\nsim\varphi(g_2)$. On the other hand suppose that $g_1G_2=g_2G_2$, then $g_2g_1\inv$ is an element of $G_2$ of norm at most $2n$. We thus have by \Cref{prop:CdGroupToRing} that $\cd_{G}(g_1,g_2)\leq C'\cd_{\mf g}(g_1,g_2g_1\inv)$ and thus $\cd_{G}(g_1,g_2)\leq C'\conj_{\mf g}(2Cn)$. Combining the two cases gives that $\conj_G\prec\conj_{\mf g}$.
			
		The other direction is mostly analogous, let $a,c\in \mf g$ be such that $c\notin[a,\mf g]$. Again we distinguish $2$ cases. In the first case, suppose that $c\notin\mf g_2$, then $c$ does not vanish in some finite abelian quotient of $\frac{\mf g}{\mf g_2}$ of size at most $\mathrm{Rf}_{\frac{\mf g}{\mf g_2}}(n)$. In the other case, we again use \Cref{prop:CdGroupToRing} to obtain $\cd_{\mf g}(a,c)\leq C'\conj_{G}(2Cn)$.
	\end{proof}
	The above gives us \cref{prop:conjequal}. Here the ``corresponding'' Lie ring is the Lie ring as in \cref{prop:ExistsLogSurjective}. This Lie ring has the same conjugacy separability by \cref{prop:CSepGrowthEquiv} and \cref{prop:centralextension}. 

	\subsection{Retractions and products}
Conjugacy does not always behave well with respect to taking subgroups. In this section we demonstrate that the situation for retracts is well-behaved. Although this works for groups as well, we only work in the language of Lie rings.
	\begin{definition}
		Let $\mf g$ and $\mf h$ be Lie rings, with $i:\mf h\rightarrow \mf g$ a morphism. A \textbf{retraction} $r: \mf g \to \mf h$ is a left inverse of $i$, so satisfying $r\comp i=\id_{\mf h}$. If such a retraction exists, then we say that $\mf h$ is a retract of $\mf g$.
	\end{definition}
    	As retracts have a left inverse, they are always injective, hence $\mf h$ can be seen as a Lie subring of $\mf g$. Dually,  the retraction map has a right inverse and must thus be surjective.
	\begin{example}
		The embedding of a Lie ring $\mf h$ into the product $\mf g \times \mf h$ is a retract, with the retraction given by the projection on the second component. 
	\end{example}
	The following lemma shows that retracts behave well for conjugacy separability.
	\begin{lemma}
		\label{prop:retractlowerbound}
		Let $\mf g,\mf h$ be finitely generated nilpotent Lie rings and let $i:\mf h\rightarrow\mf g$ be a retract with retraction map $r$. For all $a,c\in\mf h$ such that $c\notin[a,\mf h]$, it holds that $\cd_{\mf h}(a,c)=\cd_{\mf g}(i(a),i(c))$. Furthermore, $\conj_{\mf h}\preceq \conj_{\mf g}$.
	\end{lemma}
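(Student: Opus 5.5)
We prove the equality $\cd_{\mf h}(a,c)=\cd_{\mf g}(i(a),i(c))$ by two inequalities, each obtained by transporting ideals along $i$ or $r$. We identify $\mf h$ with $i(\mf h)\subset\mf g$. First note that the statement makes sense: $i(c)\notin[i(a),\mf g]$. Indeed, if $i(c)=[i(a),x]$ for some $x\in\mf g$, then applying $r$ gives $c=[a,r(x)]\in[a,\mf h]$, a contradiction.

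\emph{Step 1: $\cd_{\mf h}(a,c)\le\cd_{\mf g}(i(a),i(c))$.} Let $I\normal\mf g$ be an ideal of minimal index with $i(c)\notin[i(a),\mf g]+I$. Put $J=i^{-1}(I)$, which is an ideal of $\mf h$ as the preimage of an ideal under a morphism. The map $\mf h/J\to\mf g/I$ is injective, so $[\mf h:J]\le[\mf g:I]$. If $c\in[a,\mf h]+J$, then applying $i$ gives $i(c)\in[i(a),i(\mf h)]+I\subset[i(a),\mf g]+I$, a contradiction.

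\emph{Step 2: the reverse inequality.} Let $J\normal\mf h$ be optimal for $(a,c)$ and put $I=r^{-1}(J)$. This is an ideal of $\mf g$ because $r$ is a morphism, and $r$ is surjective, so $\mf g/I\cong\mf h/J$ and $[\mf g:I]=[\mf h:J]$. Suppose $i(c)\in[i(a),\mf g]+I$, say $i(c)=[i(a),x]+y$ with $y\in I$. Applying $r$ and using $r\comp i=\id$ gives $c=[a,r(x)]+r(y)\in[a,\mf h]+J$, a contradiction. This is the key use of the retraction, and it is the only non-formal point.

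\emph{Step 3: $\conj_{\mf h}\preceq\conj_{\mf g}$.} The equality of depth functions reduces this to a norm comparison, $\gnorm{i(x)}\le C\gnorm{x}$. This holds because Guivarch norms decrease under morphisms up to a constant, as recalled in the preliminaries. We also need that $i$ is a morphism of nilpotent Lie rings compatible with the adaptive lower central series, which is implicit in that fact. Then for $\gnorm{a},\gnorm{c}\le n$ with $c\notin[a,\mf h]$ we get $\cd_{\mf h}(a,c)=\cd_{\mf g}(i(a),i(c))\le\conj_{\mf g}(Cn)$, hence $\conj_{\mf h}\preceq\conj_{\mf g}$. I expect no real obstacle here. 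The only care needed is the norm-decreasing property for $i$, which the paper has stated in general.
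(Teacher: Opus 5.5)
Your proposal is correct and follows the paper's proof essentially step for step. Both arguments check well-definedness by applying $r$, then prove one inequality by pulling an ideal back along $i$ and the other by pulling it back along the surjective retraction $r$ (using $r\circ i=\mathrm{id}$), and finally deduce $\conj_{\mf h}\preceq\conj_{\mf g}$ from the fact that the Guivarch norm decreases under morphisms.
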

	\begin{proof}
		First, we demonstrate that $i(c)\notin[i(a),\mf g]$ and thus that $\cd_{\mf g}(i(a),i(c))$ is defined. Assume for a contradiction that $i(c)\in[i(a),\mf g]$, then there would exist some $g\in\mf g$ such that $i(c)=[i(a),g]$. Applying the retraction $r$ gives then $c=[a,r(g)]$, which contradicts $c\notin[a,\mf h]$. 
        
		Next we show that $\cd_{\mf h}(a,c)=\cd_{\mf g}(i(a),i(c))$. Let $I$ be an ideal in $\mf h$ such that $c\notin[a,\mf h]+I$.  Take $J = r\inv (I)$, then just as before $i(c)\notin[i(a),\mf g]+J$. As $r$ is surjective, we have that $[\mf g : J] = [\mf h: I]$ and thus $\cd_{\mf h}(a,c)\geq \cd_{\mf g}(i(a),i(c))$.
		
		For the other inequality, take $J$ be an ideal in $\mf g$ such that $i(c)\notin[i(a),\mf g]+J$. Consider now $I=i\inv (J)$. As $i$ is injective, we know $[\mf h :I] \leq [\mf g: J]$ and moreover $c\notin[a,\mf h]+I$. Indeed, if the latter were not the case, then there would exist some $h\in\mf h$ and $h'\in I$ such that $c=[a,h]+h'$ and thus also such that $i(c)=[i(a),i(h)]+i(h')$, which contradicts the definition of $J$. This demonstrates that $\cd_{\mf h}(a,c)\leq \cd_{\mf g}(i(a),i(c))$.
		
        The inequality $\conj_{\mf h}\preceq \conj_{\mf g}$ follows as the Guivarch norm decreases when taking morphisms.
	\end{proof}
	
	\begin{lemma}
		\label{prop:retractupperbound}
		Let $\mf g,\mf h$ be $2$-step nilpotent finitely generated Lie ring, and let $i:\mf h\rightarrow\mf g$ be a retract with retraction map $r$. For any $a\in\mf g$ and $c\in\gamma_2\mf g$ with $c\notin[a,\mf g]$ and $r(c)\notin [r(a),\mf h]$, it holds that $\cd_{\mf g}(a,c)\leq \cd_{\mf h}(r(a),r(c))$.
	\end{lemma}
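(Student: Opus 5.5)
The idea is to pull back an optimal ideal of $\mf h$ along the retraction $r$, as in the first half of the proof of \cref{prop:retractlowerbound}. Since $r$ is surjective, $J=r\inv(I)$ satisfies $[\mf g:J]=[\mf h:I]$. The only work is to check that the non-membership condition survives the pull-back.

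Take an ideal $I\normal\mf h$ with $r(c)\notin[r(a),\mf h]+I$ and $[\mf h:I]=\cd_{\mf h}(r(a),r(c))$. This is defined, since $r(c)\notin[r(a),\mf h]$ by hypothesis. Set $J=r\inv(I)$. Because $r$ is a surjective morphism of Lie rings, $J$ is an ideal of $\mf g$ and $\mf g/J\cong\mf h/I$, so $[\mf g:J]=[\mf h:I]$.

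It remains to show $c\notin[a,\mf g]+J$. Suppose instead that $c=[a,g]+j$ with $g\in\mf g$ and $j\in J$. Applying the morphism $r$ gives
\[
r(c)=[r(a),r(g)]+r(j)\in[r(a),\mf h]+I,
\]
which contradicts the choice of $I$. Hence $J$ is admissible in the definition of $\cd_{\mf g}(a,c)$. This definition requires only $c\notin[a,\mf g]$, which is assumed. Therefore
\[
\cd_{\mf g}(a,c)\le[\mf g:J]=\cd_{\mf h}(r(a),r(c)).
\]

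There is no real obstacle. The $2$-step hypothesis and $c\in\gamma_2\mf g$ are not needed for this inequality; presumably they matter for how the lemma is used later. The one point to be careful about is that $J$ is an ideal of finite index. Both follow directly from $r$ being a surjective Lie ring homomorphism, since the preimage of an ideal under a surjective morphism is an ideal and the index is preserved.
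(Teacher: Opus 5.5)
Your proof is correct and is the same as the paper's: pull back an optimal ideal $I$ along the surjective morphism $r$ to $J=r^{-1}(I)$, note that $[\mathfrak g:J]=[\mathfrak h:I]$, and apply $r$ to rule out $c\in[a,\mathfrak g]+J$. You are also right that the $2$-step hypothesis and the condition $c\in\gamma_2\mathfrak g$ play no role in this argument.
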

	\begin{proof}
		Let $I \triangleleft \mf h$ such that $r(c)\notin[r(a),\mf h]+I$, then take $J = r\inv I$. As $r$ is surjective $[\mf g : J] = [\mf h: I]$ and moreover $c\notin[a,\mf g]+J$, just as in the previous proof. 
	\end{proof}

	In the special case of the direct product, we thus obtain the following result:
	\begin{corollary}\label{prop:directSumDecomposition}
		Let $\mf g,\mf h$ be $2$-step nilpotent Lie rings, then $\conj_{\mf g\times\mf h}\simeq\max(\conj_{\mf g},\conj_{\mf h})$
	\end{corollary}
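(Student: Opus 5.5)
The plan is to prove the two inequalities $\max(\conj_{\mf g},\conj_{\mf h})\preceq \conj_{\mf g\times\mf h}$ and $\conj_{\mf g\times\mf h}\preceq\max(\conj_{\mf g},\conj_{\mf h})$ separately, using the retract lemmas. For the lower bound, note that both $\mf g$ and $\mf h$ are retracts of $\mf g\times\mf h$ via the inclusions $x\mapsto(x,0)$, $y\mapsto(0,y)$ and the two projections. \Cref{prop:retractlowerbound} then gives $\conj_{\mf g}\preceq\conj_{\mf g\times\mf h}$ and $\conj_{\mf h}\preceq\conj_{\mf g\times\mf h}$, hence the maximum is also dominated.

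For the upper bound, take $(a_1,a_2),(c_1,c_2)\in\mf g\times\mf h$ with Guivarch norm at most $n$ and $(c_1,c_2)\notin[(a_1,a_2),\mf g\times\mf h]=[a_1,\mf g]\times[a_2,\mf h]$. The bracket splits componentwise, so this last identity is immediate. Consequently $c_1\notin[a_1,\mf g]$ or $c_2\notin[a_2,\mf h]$; say the first. I would reduce to the case $c\in\gamma_2(\mf g\times\mf h)$, which is where \cref{prop:retractupperbound} applies, in two cases.

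\emph{Case 1: $c\notin\sqrt{\gamma_2}$.} Here $c$ is nonzero in the free abelian quotient $(\mf g\times\mf h)/(\mf g\times\mf h)_2$, which has polynomially bounded norm. Killing $(\mf g\times\mf h)_2$ together with a suitable finite-index sublattice gives an abelian Lie ring quotient of size $\preceq\log n$ in which $c$ survives, while $[a,\cdot]$ vanishes. This is the argument of \cref{prop:AbelianSubgroupSeparability}, or of the first case in \cref{prop:CSepGrowthEquiv}.

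\emph{Case 2: $c\in\sqrt{\gamma_2}$.} If $c$ lies in $\sqrt{\gamma_2}$ but not in $\gamma_2$, one could kill $\gamma_2$ together with a complement lattice, giving a quotient of bounded index $\det(\gamma_2)$, a constant. So we may assume $c\in\gamma_2(\mf g\times\mf h)$; in particular $c_1\in\gamma_2\mf g$. Apply \cref{prop:retractupperbound} to the retraction $r=\mathrm{pr}_1:\mf g\times\mf h\to\mf g$: since $r(c)=c_1\notin[a_1,\mf g]=[r(a),\mf g]$, we get
\[
\cd_{\mf g\times\mf h}(a,c)\le\cd_{\mf g}(a_1,c_1)\le\conj_{\mf g}(Cn),
\]
because projections decrease the Guivarch norm up to a constant. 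The case $c_2\notin[a_2,\mf h]$ is symmetric. Taking the maximum over all pairs yields $\conj_{\mf g\times\mf h}\preceq\max(\conj_{\mf g},\conj_{\mf h})$, using that both functions grow at least like $\log$, so the bounded and logarithmic terms are absorbed. The degenerate case where a factor is abelian, so that its $\conj$ is logarithmic, is handled the same way.

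The main obstacle I expect is the reduction to $c\in\gamma_2$ required by \cref{prop:retractupperbound}. The point is to check that the non-$\gamma_2$ parts are separated cheaply, which works because $[a,\cdot]$ lands in $\gamma_2$.
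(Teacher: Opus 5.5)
Your proposal is correct and follows the paper's proof. The lower bound comes from \cref{prop:retractlowerbound} applied to the two factor inclusions. The upper bound comes from $[(a_1,a_2),\mf g\times\mf h]=[a_1,\mf g]\times[a_2,\mf h]$, assuming $c_1\notin[a_1,\mf g]$ and applying \cref{prop:retractupperbound} to the projection. Your preliminary reduction to $c\in\gamma_2(\mf g\times\mf h)$ is valid but unnecessary: the proof of \cref{prop:retractupperbound}, which takes $J=r\inv(I)$, never uses the hypothesis $c\in\gamma_2\mf g$, and that is why the paper applies the lemma directly.
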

	\begin{proof}
		By \cref{prop:retractlowerbound}, we have $\max(\conj_{\mf g},\conj_{\mf h})\preceq \conj_{\mf g\times\mf h}$. Let $(a_1,a_2),(c_1,c_2)\in \mf g\times\mf h$ such that $(c_1,c_2)\notin[(a_1,a_2),\mf g\times\mf h]=[a_1,\mf g]\times [a_2,\mf h]$. By symmetry we can assume that $c_1\notin[a_1,\mf g]$. Then by \cref{prop:retractupperbound} we have that $\cd_{\mf g\times\mf h}((a_1,a_2),(c_1,c_2))\leq \cd_\mf g(a_1,c_1)$. The result follows.
	\end{proof}

	\subsection{Bounding $\conj$ in the $2$-step Case}\label{sec:AlgUpperBound}
	Throughout this section we will assume that $G$ is a torsion-free $2$-step nilpotent group.
	By \cref{prop:CSepGrowthEquiv} we know that $\conj_G\simeq\conj_{\mf g}$ where $\mf g$ is an associated Lie ring. \cref{prop:AlgUpperBound2} is thus equivalent to:
	\begin{theorem}\label{prop:AlgUpperBoundLieRing}
		Let $\mf g$ be a $2$-step nilpotent Lie ring. Let $h$ be the rank of $\mf g$, $h_2$ the rank of $\mf g_2$ and $h_z$ the rank of $Z(\mf g)$. Suppose that $h_2\geq 2$, then $$\conj_{\mf g}(n)\preceq n^{h_2\lfloor h-h_z\rfloor_{\mathrm{odd}}}.$$
	\end{theorem}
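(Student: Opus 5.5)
The plan is to follow the proof of \cref{prop:AlgUpperBound} in the Lie ring setting, and to gain the improvement from two sources. First, in class $2$ the map $x\mapsto[a,x]$ is linear and only involves the first layer of $a$. Second, after passing to a cyclic centre, the bracket becomes an alternating form, and an alternating form has even rank.

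Let $a,c\in\mf g$ with $c\notin[a,\mf g]$ and $\gnorm{a},\gnorm{c}\le n$.

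\textbf{Step 1: reduce to $c\in\mf g_2$.} If $c\notin\mf g_2$, then $c$ survives in a finite abelian quotient of $\mf g/\mf g_2$ of size $\preceq\log n$, and abelian quotients are ideals. So assume $c\in\mf g_2\subset Z(\mf g)$.

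\textbf{Step 2: bound the determinant of $[a,\mf g]$.} Since $[a,\_]$ factors through $\mf g/Z(\mf g)$, the module $[a,\mf g]$ is generated by the $h-h_z$ vectors $[a,e_i]$, where the $e_i$ form a basis of a complement of $Z(\mf g)$. The first-layer coordinates of $a$ are $O(n)$, so each $[a,e_i]$ has norm $O(n)$. Hence the rank-$r$ lattice $[a,\mf g]\subset\mf g_2$ has $\det([a,\mf g])\le Cn^{r}\le Cn^{h_2}$. I would obtain this from the Gram or maximal-minor bound rather than from the cruder \cref{prop:SubspaceNormToIndex} applied to all of $\mf g$.

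\textbf{Step 3: apply \cref{prop:AbelianSubgroupSeparability} inside $Z(\mf g)$.} This gives a quotient of $Z(\mf g)/[a,\mf g]$ in which $c$ survives.
\begin{itemize}
\item If $c\notin\sqrt{[a,\mf g]}$, the quotient has size $\preceq\log n$.
\item Otherwise it has size at most $q:=\det([a,\mf g])\preceq n^{h_2}$.
\end{itemize}
As in \cref{prop:MinQuotientCyclicCenter}, I then choose a linear functional $\lambda:Z(\mf g)\to\ZZ/q\ZZ$, i.e.\ a cyclic quotient of the centre, with $\lambda([a,\mf g])=0$ and $\lambda(c)\neq0$.

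\textbf{Step 4: build the ideal.} The ideal $I$ should contain $\ker\lambda$ and be compatible with the bracket. Consider the alternating form
\[ \omega=\lambda\circ[\_,\_]:\mf g/Z(\mf g)\times\mf g/Z(\mf g)\to\ZZ/q\ZZ . \]
Set $I=\ker\lambda+\widetilde{R}$, where $\widetilde{R}$ is the preimage in $\mf g$ of the radical $R$ of $\omega$ on $(\mf g/Z(\mf g))\otimes\ZZ/q\ZZ$. By construction $[I,\mf g]\subset\ker\lambda\subset I$, so $I$ is an ideal. The quotient $\mf g/I$ is then a Heisenberg-type ring: a cyclic centre $\ZZ/q\ZZ$ together with $V/R$, on which $\omega$ is non-degenerate.

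We still have $c\notin[a,\mf g]+I$. Indeed, $\lambda$ kills $[a,\mf g]$, kills $\ker\lambda$, and kills the central part of $\widetilde R$ because that part lies in $\ker\lambda$; but $\lambda(c)\neq0$.

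\textbf{Step 5: count the index (main obstacle).} We have $[\mf g:I]=q\cdot\abs{V/R}$. The goal is to show that $V/R$ is, up to bounded factors, a module of rank $2s$ over $\ZZ/q\ZZ$, with
\[ 2s+1\le\lfloor h-h_z\rfloor_{\mathrm{odd}}\quad\text{(or its appropriate version).} \]
The key point is that $a$ itself lies in the radical of $\omega$ modulo $Z(\mf g)$, because $\lambda([a,\mf g])=0$. So the non-degenerate part lives on a space of rank at most $h-h_z-1$, and has even rank there. This gives total index $\preceq q^{\,\lfloor h-h_z\rfloor_{\mathrm{odd}}}\preceq n^{h_2\lfloor h-h_z\rfloor_{\mathrm{odd}}}$.

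The delicate points are all in this step:
\begin{itemize}
\item The symplectic normal form is only valid over $\ZZ/q\ZZ$ with $q$ composite, so I would work one prime power at a time via elementary divisors (Smith normal form for alternating matrices). Each invariant factor contributes in pairs, so the parity argument survives.
\item One must ensure that $a\in R$ still removes one dimension when $a$ is divisible mod $q$.
\item The log-sized case must still fit under the bound.
\end{itemize}
Finally, the hypothesis $h_2\ge2$ is used only to ensure that this term dominates the logarithmic and abelian contributions. Combining the above with \cref{prop:comparegeometric} yields the theorem.
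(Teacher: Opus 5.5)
\textbf{The gap is in Step 5.} You claim that $a\in R$ cuts the non-degenerate part of $\omega$ down to rank at most $h-h_z-1$. That is only true when the image of $a$ in $V/qV$ is unimodular, i.e.\ when no prime dividing $q$ divides $\ind{a}$. If $a\in pV$ with $p\mid q$ (in the extreme case $a\in qV$), then $\omega(a,\cdot)=0$ holds automatically and says nothing about $R$.

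In that regime the bound $[\mf g:I]\preceq q^{\lfloor h-h_z\rfloor_{\mathrm{odd}}}$ fails for every admissible ideal, not just yours. Take $\mf g$ with basis $x_1,\dots,x_4,z_1,z_2$, brackets $[x_1,x_2]=[x_3,x_4]=z_1$, $[x_1,x_3]=z_2$, and all other brackets of basis elements zero. Then $Z(\mf g)=\mf g_2=\langle z_1,z_2\rangle$, $h_2=2$ and $h-h_z=4$. Let $a=px_1$ and $c=z_1$.
\begin{itemize}
\item We have $[a,\mf g]=pZ(\mf g)$, so your Step 3 forces $q=p$, and Step 5 predicts an ideal of index $\preceq p^3$.
\item Any ideal $I$ with $c\notin[a,\mf g]+I$ may be replaced by $J=I+[a,\mf g]$, which contains $pZ(\mf g)$ but not $z_1$.
\item Bracketing $\sum_i\alpha_ix_i+z\in J$ successively with $x_2,x_4,x_1,x_3$ forces $p\mid\alpha_1,\alpha_3,\alpha_2,\alpha_4$. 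Hence $J\subseteq pV+Z(\mf g)$ and $[\mf g:J]\ge p^5$.
\end{itemize}
This does not contradict the theorem, since $p^5\le n^6$. It does show that the loss must be paid for by the smallness of $a/\ind{a}$, not by $q$.

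It also shows that your remark on $h_2\ge 2$ is wrong. Nothing in Steps 1--5 uses that hypothesis, yet for $h_2=1$ they would give $n^{\lfloor h-h_z\rfloor_{\mathrm{odd}}}$. That is false: in the Heisenberg ring, $a=px$, $c=z$ already needs index at least $p^3$. The hypothesis has to be spent exactly on non-primitive $a$.

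\textbf{The missing idea is the paper's reduction to $\ind{a}=1$.}
\begin{itemize}
\item First split off the abelian direct factor so that $Z(\mf g)=\mf g_2$ (\cref{prop:directSumDecomposition}). Take $a$ in a complement $V$ of $Z(\mf g)$ and $c\in Z(\mf g)$.
\item The case $c\notin\sqrt{[a,\mf g]}$ is harmless: $\ker\lambda+pV$ with a prime $p\preceq\log n$ is an ideal of polylogarithmic index.
\item Let $L=\ind{a}\preceq n$. If $L\nmid\ind{c}$, then $L\mf g\supseteq[a,\mf g]$ does not contain $c$ and has index $L^h$.
\item Otherwise write $a=La_0$ and $c=Lc_0$. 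Then $c_0\notin[a_0,\mf g]$, $a_0$ is primitive, and $\gnorm{a_0}\le n/L$.
\end{itemize}

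For primitive $a_0$ your radical construction does work. When $h-h_z$ is even, put an integral lift of $\omega$ in symplectic normal form with invariants $d_i$. If $v_p(d_i)<e$ for all $i$, where $p^e$ is the exact power of $p$ dividing $q_0$, then the radical modulo $p^e$ lies in $pV$, which contradicts $a_0\in R$. So one hyperbolic pair drops out at every prime; when $h-h_z$ is odd this is automatic. You then get an ideal $K_0$ with $[\mf g:K_0]\le q_0^{\lfloor h-h_z\rfloor_{\mathrm{odd}}}$ and $q_0\preceq (n/L)^{h_2}$. This is a canonical version of Proposition~\ref{prop:AlgUpperBoundIndex1}, with $R$ in place of $\langle a\rangle+\langle x_0\rangle+qZ(\mf g)^\perp$.

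Now $LK_0$ is an ideal, and $c\notin[a,\mf g]+LK_0$ by torsion-freeness. Its index satisfies $[\mf g:LK_0]=L^h[\mf g:K_0]\preceq L^h(n/L)^{h_2\lfloor h-h_z\rfloor_{\mathrm{odd}}}\le n^{h_2\lfloor h-h_z\rfloor_{\mathrm{odd}}}$.

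Both cases need $h\le h_2\lfloor h-h_z\rfloor_{\mathrm{odd}}$. That inequality is exactly where $h_2\ge 2$ and $Z(\mf g)=\mf g_2$ are used.

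Finally, a smaller fix in Step 4: take $\widetilde R$ inside $V$. The full preimage in $\mf g$ contains $Z(\mf g)\ni c$.
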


    Indeed, the case of $h_2=1$ was already done in \cite{pengitore2015effective}, where the author showed that $\conj_G(n)\simeq n^{h-h_z+1}$. Notice that in this case the above bound does not hold and hence the condition $h_2>1$ is necessary. In all other cases, $h{\mf g}\geq h{Z(\mf g)}+3$. Otherwise, the quotient $\frac{\mf g}{Z(\mf g)}$ would be either cyclic, and by consequence the Lie bracket would be trivial, or the quotient would be of rank $2$, and thus the commutator subgroup of rank $1$. Hence $h(Z(\mf g))(\lfloor h(\mf g)-h(Z(\mf g))\rfloor_{\mathrm{odd}})\geq h(\mf g)+2$.

	From now on, fix a basis $S$ on $\mf g$. We will be using the following lemmas:
	
	\begin{lemma}\label{prop:LieBracketBounded}
		There exists some constant $C>0$ such that for any $a\in \mf g$, and for any $s\in S$, $$
		\norm{[a,s]}_{S}\leq C\norm{a}_S
		$$
	\end{lemma}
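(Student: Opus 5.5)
The plan is to reduce to the linearity of the bracket together with the fact that $\norm{\cdot}_S$ is the word norm of the free $\ZZ$-module $\mf g$ with basis $S$. Here $\norm{x}_S$ is the $\ell^1$-norm of the coordinate vector of $x$ with respect to $S$. Writing $a=\sum_{t\in S}a_t t$ with $a_t\in\ZZ$, we have $\norm{a}_S=\sum_{t\in S}\abs{a_t}$.

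The key steps in order are as follows. First, fix $s\in S$ and use bilinearity of the bracket to write $[a,s]=\sum_{t\in S}a_t[t,s]$. Next, apply the triangle inequality for the word norm on the abelian group $(\mf g,+)$ together with $\norm{kx}_S\leq\abs{k}\norm{x}_S$, which gives
$$\norm{[a,s]}_S\leq\sum_{t\in S}\abs{a_t}\,\norm{[t,s]}_S.$$
Finally, set $C=\max\set{\norm{[t,s]}_S\mid t,s\in S}$, which is finite since $S$ is finite. If this maximum is $0$ (the bracket vanishes), replace it by $1$ so that $C>0$. Then $\norm{[a,s]}_S\leq C\sum_t\abs{a_t}=C\norm{a}_S$.

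There is no real obstacle. The only point to check is that the norm used is the word norm on $(\mf g,+)$ with respect to the basis $S$, so it coincides with the $\ell^1$-norm in coordinates and is subadditive and homogeneous under integer scaling. The constant $C$ depends only on $\mf g$ and $S$, not on $a$ or $s$.
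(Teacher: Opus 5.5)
Your proof is correct and follows the same route as the paper: linearity of $[\cdot,s]$ bounds the bracket for each fixed $s$, and finiteness of $S$ lets you take a maximum over $s$. The only difference is that you prove the boundedness of the linear map directly, by expanding $a$ in the basis $S$ and using the $\ell^1$ form of the word norm, rather than citing it; replacing a zero constant by $1$ also handles the requirement $C>0$.
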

	\begin{proof}
		By definition of a Lie ring, the map $[\_,s]$ is a linear map from $\mf g$ to $\mf g_2$. Linear maps between finite-dimensional normed spaces are always bounded, thus there exists some constant $C_s\geq0$ such that for any $a\in \mf g$, $$
		\norm{[a,s]}_{S}\leq C_s \norm{a}_S.
		$$
		As $S$ is a finite set, the result follows by taking $C=\max\set{C_s\mid s\in S}$.
	\end{proof}
	
	\begin{lemma}\label{prop:DimensionSymplecticForm}
		Let $\mf g$ be a torsion-free Lie ring with $\mf g_2$ infinite cyclic. Let $Z(\mf g)$ denote the center of $\mf g$. Then $h(\mf g)-h(Z(\mf g))$ is even.
	\end{lemma}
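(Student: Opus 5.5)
The idea is to view the bracket as an alternating bilinear form and use the fact that a nondegenerate alternating form on a rational vector space lives only in even dimension. Since $\mf g_2$ is infinite cyclic, fix a generator $z$ of $\mf g_2$. Because $\mf g$ is torsion-free and $\mf g_2=\sqrt{\gamma_2\mf g}$, every bracket $[x,y]$ lies in $\mf g_2$, so we can write $[x,y]=\omega(x,y)z$ for a unique integer $\omega(x,y)$. By bilinearity of the bracket, $\omega:\mf g\times\mf g\to\ZZ$ is bilinear, and $[x,x]=0$ shows it is alternating. We then tensor with $\QQ$ to obtain an alternating form $\omega_\QQ$ on $V=\QQ\otimes\mf g$, a vector space of dimension $h(\mf g)$.

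The next step identifies the center with the radical of $\omega$. An element $x\in\mf g$ is central exactly when $\omega(x,y)=0$ for all $y\in\mf g$, so
\[ Z(\mf g)=\mf g\cap \mathrm{rad}(\omega_\QQ), \]
where $\mathrm{rad}(\omega_\QQ)=\{v\in V\mid \omega_\QQ(v,w)=0 \text{ for all } w\in V\}$. Any $v$ in the radical has a nonzero integer multiple lying in $\mf g$, and that multiple is central. Hence the rank of $Z(\mf g)$ equals $\dim_\QQ \mathrm{rad}(\omega_\QQ)$, and
\[ h(\mf g)-h(Z(\mf g))=\dim_\QQ V/\mathrm{rad}(\omega_\QQ). \]

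The form $\omega_\QQ$ descends to a nondegenerate alternating form on $W=V/\mathrm{rad}(\omega_\QQ)$. The standard symplectic basis argument, or equivalently the observation that the Gram matrix $A$ satisfies $A^T=-A$ and so $\det A=\det(-A)=(-1)^{\dim W}\det A$, shows that $\dim W$ is even. Indeed, nondegeneracy gives $\det A\neq 0$, which forces $(-1)^{\dim W}=1$.

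None of these steps is difficult. The only point needing care is checking that the rank of the center equals the dimension of the rational radical, which rests on the torsion-free setting.
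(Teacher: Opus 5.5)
Your proposal is correct and follows essentially the paper's argument: the bracket becomes a $\ZZ$-valued alternating form, which is nondegenerate once the center is removed, and a nondegenerate alternating form forces even rank. The only differences are cosmetic. You pass to $\QQ\otimes\mf g$ and quotient by the radical, where the paper restricts to an integral complement $Z(\mf g)^\perp$. You also spell out the skew-symmetric determinant parity step that the paper leaves implicit.
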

	\begin{proof}
		Let $\{g_1,\cdots g_n\}$ be a basis for $Z(\mf g)^\perp$, a complement of $Z(\mf g)$. Since $\mf g_2$ is infinite cyclic, the Lie bracket induces an alternating bilinear form \[ Z(\mf g)^\perp\times Z(\mf g)^\perp\to \mf g_2 \approx \ZZ. \] This map is non-degenerate, otherwise there would exist some $0\neq g\in Z(\mf g)^\perp$ for which $[g,\mf g]=0$. In particular the rank of $Z(\mf g)^\perp$ must be even.
	\end{proof}

Let $Z(\mf g)^\perp$ be a complement of $Z(\mf g)$ in $\mf g$. As the center $Z(\mf g)$ is isolated, we have that $\mf g=Z(\mf g)\oplus Z(\mf g)^\perp$. We first prove a special case of \cref{prop:AlgUpperBoundLieRing}.
	\begin{proposition}\label{prop:AlgUpperBoundIndex1}
		Let $\mf g=Z(\mf g)\oplus Z(\mf g)^\perp$ be a $2$-step nilpotent Lie ring with basis $S$.  Then for $a\in Z(\mf g)^\perp$ and $c\in Z(\mf g)\backslash [a,\mf g]$ such that $\ind a=1$, there exists some quotient $\varphi:\mf g\rightarrow \mf q$ such that $\varphi(c)\notin\varphi ([a,\mf g])$.
		This quotient can be chosen such that $$\abs{\mf q}\leq \const{0}\max\set{\norm{a}_S,\log(\max\set{\norm{a}_S,\norm{c}_S})}^{h(Z(\mf g))\lfloor h(Z(\mf g)^\perp)\rfloor_{\mathrm{odd}}}$$ where $\const 0$ only depends on the choice of basis $S$.
	\end{proposition}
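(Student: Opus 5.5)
The strategy is to separate $c$ from $[a,\mf g]$ using a single cyclic quotient of the center $Z(\mf g)$, and then to build the ideal on $Z(\mf g)^\perp$ from the radical of an alternating form. The exponent $\lfloor h(Z(\mf g)^\perp)\rfloor_{\mathrm{odd}}$ should come out of two facts: alternating forms have even rank, and the primitive element $a$ lies in the radical.

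\emph{Step 1 (a cyclic quotient of the center).} Since $a\in Z(\mf g)^\perp$ and $\mf g$ is $2$-step, $[a,\mf g]=[a,Z(\mf g)^\perp]$ is a submodule of $\mf g_2\subset Z(\mf g)$. By \cref{prop:LieBracketBounded} it is generated by the vectors $[a,s]$, $s\in S$, each of norm at most $C\norm{a}_S$. So $\norm{[a,\mf g]}_S\le C\norm{a}_S$, and \cref{prop:SubspaceNormToIndex} gives $\det([a,\mf g])\le C'\norm{a}_S^{h(Z(\mf g))}$.

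I then apply \cref{prop:AbelianSubgroupSeparability} inside the free module $Z(\mf g)$, in the same way as in the proof of \cref{prop:MinQuotientCyclicCenter}. This gives a homomorphism $\lambda:Z(\mf g)\to\ZZ/q\ZZ$ with $\lambda([a,\mf g])=0$ and $\lambda(c)\ne0$, where
\[
q\le C''\max\set{\norm{a}_S^{h(Z(\mf g))},\log(\max\set{\norm{a}_S,\norm{c}_S})}.
\]
Passing to a primary component, I may assume $q=p^e$ is a prime power. This is harmless, because $\lambda(c)\neq 0$ survives in some primary component and that component's order is at most $q$.

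\emph{Step 2 (an ideal from the radical of an alternating form).} I define an alternating form on the free $\ZZ/q$-module $V=Z(\mf g)^\perp/qZ(\mf g)^\perp$ by $B(x,y)=\lambda([x,y])$. Its radical $R\subseteq V$ has preimage $\tilde R$ in $Z(\mf g)^\perp$. I claim that
\[
I=\ker\lambda\oplus\tilde R
\]
is an ideal. Indeed, $[\ker\lambda,\mf g]=0$, and $[\tilde R,\mf g]\subseteq\ker\lambda$ by the definition of the radical, using that $\lambda$ vanishes on $q$-multiples.

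Next, $c\notin[a,\mf g]+I$. The element $c$ is central, so any equality $c=[a,g]+i$ splits into components. Its $Z(\mf g)$-component would force $\lambda(c)\in\lambda([a,\mf g])=0$, a contradiction.

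The quotient has order
\[
\abs{\mf g/I}=q\cdot\abs{Z(\mf g)^\perp/\tilde R}=q\cdot[V:R].
\]
Moreover $a+qZ(\mf g)^\perp\in R$, because $\lambda([a,\mf g])=0$.

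\emph{Step 3 (bounding $[V:R]$).} This is the main obstacle. Over $\ZZ/p^e$, an alternating form admits a symplectic normal form: $V$ splits as a direct sum of hyperbolic planes $H_i$ with $B=p^{k_i}\cdot(\text{standard form})$ on $H_i$, plus a totally isotropic part. Then
\[
[V:R]=\prod_{k_i<e}p^{2(e-k_i)}\le q^{\,2t},
\]
where $2t$ is the rank of $B\bmod p$ over $\mathbb F_p$.

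The form $B\bmod p$ is alternating on $V/pV\cong\mathbb F_p^{h(Z(\mf g)^\perp)}$. Since $\ind a=1$, the image of $a$ in $V/pV$ is nonzero, and it lies in the radical of $B\bmod p$. Hence
\[
2t\le h(Z(\mf g)^\perp)-1,
\]
and since $2t$ is even, $2t+1\le\lfloor h(Z(\mf g)^\perp)\rfloor_{\mathrm{odd}}$. Therefore
\[
\abs{\mf g/I}\le q^{\lfloor h(Z(\mf g)^\perp)\rfloor_{\mathrm{odd}}}.
\]
Substituting the bound on $q$ from Step 1 gives the claimed estimate with exponent $h(Z(\mf g))\lfloor h(Z(\mf g)^\perp)\rfloor_{\mathrm{odd}}$, after enlarging the constant $\const0$.

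The delicate points are, first, justifying the normal form over $\ZZ/p^e$, or at least the inequality $[V:R]\le q^{\mathrm{rank}(B\bmod p)}$; I would prove this by induction, splitting off one hyperbolic plane at a time using an entry of minimal valuation. Second, checking that $\tilde R$ really gives an ideal when $q$ is not prime.
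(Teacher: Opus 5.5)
You take a different route from the paper. Instead of its explicit ideal $K_0+qZ(\mf g)^\perp+\langle a\rangle+\langle x_0\rangle$, you use the whole radical of $B(x,y)=\lambda([x,y])$ on $V=Z(\mf g)^\perp/qZ(\mf g)^\perp$. Steps 1 and 2 are correct. Your worry about $\tilde R$ is unfounded: $[\tilde R,\mf g]=[\tilde R,Z(\mf g)^\perp]\subseteq\ker\lambda$ follows directly from the definition of $R$, for any $q$.

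\textbf{The gap.} The inequality $[V:R]\le q^{2t}$ with $2t=\mathrm{rank}(B\bmod p)$ in Step 3 is false once $e\ge2$. Your own product formula shows why. Every hyperbolic plane with $k_i<e$ contributes to $[V:R]=\prod_{k_i<e}p^{2(e-k_i)}$, but only planes with $k_i=0$ contribute to the rank of $B\bmod p$.

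Concretely, take $V=(\ZZ/p^2\ZZ)^3$ with $B(f_1,f_2)=p$ and all other pairings of basis vectors zero. Then $B\bmod p=0$, so $t=0$. Yet $R=p\langle f_1,f_2\rangle\oplus\langle f_3\rangle$ has index $p^2=q$.

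This configuration does occur in your argument. Take $[y_3,y_1]=p^2z$, $[y_1,y_2]=pz+w$, $a=y_3$ and $c=z\in\sqrt{[a,\mf g]}\setminus[a,\mf g]$. One admissible output of \cref{prop:AbelianSubgroupSeparability} is $\lambda:z\mapsto1$, $w\mapsto0$ into $\ZZ/p^2\ZZ$, which gives exactly this $B$. Then $\abs{\mf g/I}=q^2$, whereas your claimed bound $q^{2t+1}$ is $q$. So the lemma you intend to prove by induction cannot be proved as stated.

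\textbf{Repair using your normal form.} The conclusion survives with the correct count. Let $s$ be the number of planes with $k_i<e$; then $[V:R]\le q^{2s}$. Since $\bar a\in R$, its coordinates on the $2s$ basis vectors of those planes are divisible by $p^{e-k_i}$, hence by $p$. Because $\ind{a}=1$ gives $\bar a\notin pV$, some coordinate of $\bar a$ outside those planes is a unit. Hence $2s\le h(Z(\mf g)^\perp)-1$, and
\[
\abs{\mf g/I}\le q^{2s+1}\le q^{\lfloor h(Z(\mf g)^\perp)\rfloor_{\mathrm{odd}}}.
\]

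\textbf{Repair avoiding the normal form.} Alternatively, drop the normal form over $\ZZ/p^e\ZZ$, as the paper does. Lift $\lambda$ to an integral functional on $Z(\mf g)$; the paper does this with a basis adapted to the kernel, so that $Z(\mf g)/K_1\cong\ZZ$. Consider the resulting integral alternating form on a complement of $\langle a\rangle+Z(\mf g)$. When that complement has odd rank, the form is degenerate, so you can take a primitive $x_0$ in its radical. With the same $\lambda$, the paper's ideal is contained in your $I$, so your quotient is never larger. What your route buys is a canonical and sometimes much smaller quotient. The cost is the normal form and the more careful count above.
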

	\begin{proof}
		By \cref{prop:LieBracketBounded}, $\norm{[a,\mf g]}_{S}$ is bounded above by $\const{1}\norm{a}_{S}$ for some $C_1  > 0$. By \cref{prop:AbelianSubgroupSeparability} there exists some finite quotient $\varphi_0:Z(\mf g)\rightarrow Q$ of $\ZZ$-modules such that $\varphi_0(c)\notin \varphi_0([a,\mf g])=0$ and such that $${\abs{Q}\leq \max\set{(\const{1}\norm{a})^{h_z},\const{2}\log^{h_z}(\max\set{\const 1\norm{a}_{S_a},\norm{c}_{S_z}})}}.$$ Without loss of generality, we may assume that $Q$ is cyclic, projecting on one of its cyclic summands otherwise. Denote the order of $Q$ with $q$ and denote the kernel of $\varphi_0$ by $K_0$.

		As $Q$ is cyclic, there exists some basis $\set{v_i}$ of $Z(\mf g)$ such that $K_0 = \ZZ v_1+\ZZ v_2+\cdots +q\ZZ v_l$. Take  $K_1=\ZZ v_1+\ZZ v_2+\cdots+\ZZ v_{l-1}$, and note that the commutator restricts to a map $$
		[\_,\_]:(\langle a \rangle+Z(\mf g))^\perp\times(\langle a \rangle+Z(\mf g))^\perp\rightarrow \frac{Z(\mf g)}{K_1}.
		$$
		If $Z(\mf g)^\perp$ is odd-dimensional, then we define the element $x_0 = 0$. If $Z(\mf g)^\perp$ is even-dimensional, then $(\langle a \rangle+Z(\mf g))^\perp$ must be odd dimensional. By \cref{prop:DimensionSymplecticForm} it follows in this case that there exists some $x_0\in (\langle a \rangle+Z(\mf g))^\perp$ such that $[x_0,(\langle a \rangle+Z(\mf g))^\perp]\subset K_1$. As $\sqrt{K_1} = K_1$, dividing $x_0$ by a constant if necessary, we may assume $x_0$ to be of index $1$.
		
		We show that $K=K_0 +  qZ(\mf g)^\perp+\langle a \rangle+\langle x_0\rangle$ is an ideal in $\mf g$. Clearly this space is a submodule, thus it suffices to show that $[K,\mf g] \subset K$. Take any $g\in\mf g$ and $k_0+k_1+l_aa+l_xx_0 \in K$ with $k_0\in K_0, k_1\in qZ(\mf g)^\perp$ and $l_a,l_x\in\ZZ$. As $[k_0,g] = 0$, $[k_1,g] \in q Z(\mf g) \subset K_0$ and $[l_aa,g] \in [a,\mf g] \subset K_0$, it suffices to consider the last term $[l_x x_0,g]$. Note that $[x_0,g] = [x_0,g'+l_a'a]$ with $g'\in (\langle a \rangle+Z(\mf g))^\perp$. Now by definition of $x_0$ it holds that $[x_0,g']\in K_1\subset K_0$ and $[x_0,l_a'a] \in [a,\mf g] \subset K_0$. Combining these shows that $[k_0+k_1+l_aa+l_xx_0,g]\in K$, demonstrating that $K$ is indeed an ideal.
		
		We show that $c+K\notin [a,\mf g]+K$. Indeed, otherwise there would exist some $k\in K$ such that $c+k\in[a,\mf g]$. As both $c$ and $[a,\mf g]$ lie in $Z(\mf g)$, so must $k$, and thus is $k\in K_0$. But this would imply that $c+K_0\in [a,\mf g]+K_0$ or thus $\varphi_0(c)\in\varphi_0([a,\mf g])$, contradicting the previous.
		
		We compute the index of $K$. This index is the product of the index of $K_0$ in $Z(\mf g)$ and the index of $qZ(\mf g)^\perp+\langle a\rangle+\langle x_0\rangle$ in $Z(\mf g)^\perp$. Write $U = \langle a\rangle+\langle x_0\rangle$, As $a$ and $x_0$ are both of index $1$, and as $x_0\in(\langle a\rangle+Z(\mf g))^\perp$ we have that $U$ is isolated. It follows that $Z(\mf g)^\perp = U \oplus U^\perp$ for some complement $U^\perp$. Then $qZ(\mf g)^\perp+U = U + q U^\perp$. If $h(Z(\mf g)^\perp)$ is odd, then $x_0$ is trivial and then this subspace has index $q^{h(Z(\mf g)^\perp)-1}$. Otherwise this submodule has index $q^{h(Z(\mf g)^\perp)-2}$.
		The result follows as
		$$ q\leq\max\set{(\const 1\norm{a})^{h_z},\const2\log^{h_z}\left(\max\set{\const 1\norm{a}_{S_a},\norm{c}_{S_z}}\right)}.$$
	\end{proof}
	We can now conclude the proof of \cref{prop:AlgUpperBound2}
	\begin{proof}[Proof of \cref{prop:AlgUpperBoundLieRing}]
		We may assume that $\mf g_2=Z(\mf g)$, if this is not the case then $\mf g$ is of the form $\mf g'\oplus A$ where $A$ is free abelian with the trivial Lie bracket. As conjugacy separability of an abelian Lie ring is logarithmic, by \cref{prop:directSumDecomposition} we may restrict ourselves to $\mf g'$.

		Let $n$ be some positive integer and let $a,c\in\mf g$, be arbitrary such that $\gnorm{a}\leq n$, $\gnorm{c}\leq n$ and such that $c\notin[a,\mf g]$.We can assume that $a\in Z(\mf g)^\perp$, indeed if $a=a_z+a_{\perp}$ with $a_z \in Z(\mf g), a_{\perp} \in Z(\mf g)^\perp$, then $[a,\mf g]=[a_\perp,\mf g]$.
		Furthermore, we may assume that $c\in Z(\mf g)$. If this is not the case, then by the residual finiteness growth of free abelian groups, we might find a quotient of size at most $C\log(\gnorm{c})$ such that $c+Z(\mf g)$ does not vanish in this quotient.
				
		If $a =0$, then by \cref{prop:pnondevider} we can find some prime number $p$ not dividing the index of $c$ such that $p$ is no larger than $2C\log n$. Then $c$ does not vanish in $\frac{\mf g}{p\mf g}$. So we can assume that $a \neq 0$, and consider now $L = \ind a$. 
		
		First assume that $L$ does not divide $\ind c$.
		Then on the one hand $c$ does not vanish in $\frac{\mf g}{L\mf g}$, but on the other hand $a$ does vanish in this space, and thus also $[a,\mf g]\subset L\mf g$. As $L\leq n$, we have that $\abs{\frac{\mf g}{L\mf g}}\leq n^h\leq n^{h_z\lfloor h-h_z\rfloor_{\mathrm{odd}}}$ and thus the result follows.
		
		In the other case, $L$ divides $\ind c$. Let then $a_0,c_0\in\mf g$ such that $La_0=a$ and $Lc_0=c$. Clearly $a_0$ has index $1$. We have that $c_0\notin[a_0,\mf g]$ as otherwise, $Lc_0\in L[a_0,\mf g]=[La_0,\mf g]$. Notice that the Guivarch norm of $a_0$ is at most $\frac{\gnorm{a}}{L}$.
		By \cref{prop:AlgUpperBoundIndex1} there exists some quotient $\varphi_0:\mf g\rightarrow \mf q_0$ such that $\varphi_0(c_0)\notin \varphi_0([a_0,\mf g])$ and such that $q=\abs{\mf q_0}$ is bounded above by  $$C\max\set{\gnorm{a_0},\log(\max\set{\gnorm{a_0},\gnorm{c_0}})}^{h(Z(\mf g))\lfloor h(Z(\mf g)^\perp)\rfloor_{\mathrm{odd}}}$$ for some constant $C$ only depending on the choice of norms.
		Take $K_0$ the kernel of $\varphi_0$. As $\mf g$ is torsion-free, and $c_0\notin [a_0,\mf g]+K_0$ it follows that $Lc_0\notin [La_0,\mf g]+LK_0$.
		Let $\varphi:\mf g\rightarrow\mf q$ then be the quotient with $LK_0$ as kernel, then $\varphi(c)\notin\varphi([a,\mf g])$.
		
		The quotient $\mf q$ has order exactly the index of $LK_0$ in $\mf g$, which is equal to $L^{h(\mf g)} q$. As $L$ is bounded by $Cn$, the $\log$ appearing in the bound for $q$ will be dominated by any polynomial, so it suffices to bound the first part. As ${h(\mf g)}< {h(Z(\mf g))\lfloor h(Z(\mf g)^\perp)\rfloor_{\mathrm{odd}}}$, we thus have that \begin{align*} \vert \mf q \vert &\leq C\left(\frac{\gnorm{a}}{L}\right)^{h(Z(\mf g))\lfloor h(Z(\mf g)^\perp)\rfloor_{\mathrm{odd}}}L^{h(\mf g)}  \\ &\leq Cn^{h(Z(\mf g))\lfloor h(Z(\mf g)^\perp)\rfloor_{\mathrm{odd}}}.\end{align*}
			This concludes the proof.

	\end{proof}
Note that more precise estimates on $L$ in the proof above would lead in the case $h_2(G)=1$ to the upper bound from \cite{pengitore2015effective}.
	
	\subsection{Sharpness of the $2$-step Case}\label{sec:AlgBoundScharp}
	In \cref{sec:AlgUpperBound} we have constructed an upper bound on the conjugacy separability of $2$-step nilpotent groups or, equivalently, Lie rings. In this section we construct a family of Lie rings $\{\mf g_{k,l}\}$ with $h_2(\mf g_{k,l})= l$, $h(\mf g_{k,l}) = k \geq 2+2l$. We demonstrate that $\conj_{\mf g_{k,l}}\simeq n^{l(\lfloor k-l\rfloor_\mathrm{odd})}$ and thus that these Lie rings match the bound of \cref{sec:AlgUpperBound}. Notice that we might restrict ourselves to the case where $k-l$ is odd, we can then obtain the bound for the even case by taking the direct product with $\ZZ$.
	
	Define the $2$-step nilpotent Lie ring $\mf g_{k,l}$ on the basis
	$$\{
	u_1,u_2,x_i,e_j\mid i\in [0,k-l-3],j\in[1,l]
	\}$$
	where the commutator relations are given by the following graph:
	\[\begin{tikzcd}
		& {u_1} & {u_2} \\
		&&& {} & {} \\
		{x_0} & {x_1} & {x_2} & {x_3} & {x_4} & {}
		\arrow["{e_1}"{description}, from=1-2, to=3-1]
		\arrow["{e_2}"{description}, from=1-2, to=3-2]
		\arrow["{e_3}"{description}, from=1-2, to=3-3]
		\arrow["{e_4}"{description}, from=1-2, to=3-4]
		\arrow["{e_5}"{description}, from=1-2, to=3-5]
		\arrow["{e_2}"{description}, from=1-3, to=3-1]
		\arrow["{e_3}"{description}, from=1-3, to=3-2]
		\arrow["{e_4}"{description}, from=1-3, to=3-3]
		\arrow["{e_5}"{description}, from=1-3, to=3-4]
		\arrow[dotted, no head, from=2-4, to=2-5]
		\arrow["{e_1}"{description}, from=3-2, to=3-3]
		\arrow["{e_1}"{description}, from=3-4, to=3-5]
		\arrow[dotted, no head, from=3-5, to=3-6]
	\end{tikzcd}\]
	To be more precise, we have $[u_i,x_j]=e_{i+j}$ for $i \in \{1,2\}$, $j\geq 0$ and $i+j\leq l$, and $[x_{2i-1},x_{2i}]=e_1$ for $i\in [1,\frac{k-l-3}{2}]$. All other commutators vanish.

		If we want to demonstrate a lower bound to $\cd_{\mf g}(a,c)$, then a priori we need to check if $c$ belongs to the image of $[a,\mf g]$ in all quotients of $\mf g$. The following lemma allows us to impose some constraints on the considered quotients. 
	\begin{lemma}\label{prop:factorsThroughACommut}
		Let $\mf g$ be a $2$-step nilpotent Lie ring and $a,c\in \mf g$ such that $c\notin[a,\mf g]$. Let $I\normal \mf g$ be such that $c\notin[a,\mf g]+I$ and such that $[\mf g:I]=\cd_{\mf g}(a,c)$, then $[a,\mf g]\subset I$.
	\end{lemma}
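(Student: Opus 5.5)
The plan is to enlarge $I$ by $[a,\mf g]$ and use the minimality of $[\mf g:I]$ to see that nothing was actually added. Set $J=I+[a,\mf g]$. The argument has three steps.

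\textbf{Step 1: $J$ is an ideal.} The set $[a,\mf g]$ is the image of the $\ZZ$-linear map $[a,\_]$, so it is a submodule of $\mf g$. Hence $J$ is a submodule. To see that $J$ is closed under brackets with $\mf g$, it suffices to check the two summands separately:
\begin{itemize}
\item $[I,\mf g]\subset I\subset J$, because $I$ is an ideal;
\item $[[a,\mf g],\mf g]\subset \gamma_3\mf g=0$, because $\mf g$ is $2$-step nilpotent and $[a,\mf g]\subset\gamma_2\mf g$.
\end{itemize}
Therefore $J\normal \mf g$.

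\textbf{Step 2: $J$ still separates.} Since $[a,\mf g]$ is a submodule, we have
$$[a,\mf g]+J=[a,\mf g]+[a,\mf g]+I=[a,\mf g]+I.$$
By hypothesis $c\notin [a,\mf g]+I$, so $c\notin [a,\mf g]+J$. Thus $J$ is one of the ideals appearing in the minimum defining $\cd_{\mf g}(a,c)$.

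\textbf{Step 3: minimality forces $J=I$.} From Step 2 we get $[\mf g:J]\ge \cd_{\mf g}(a,c)=[\mf g:I]$. On the other hand $I\subset J$, so
$$[\mf g:I]=[\mf g:J]\,[J:I].$$
All these indices are finite, since $[\mf g:I]$ is. Hence $[J:I]=1$, so $J=I$, which means $[a,\mf g]\subset I$.

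There is no real obstacle. The only point needing care is Step 1, and it is exactly where $2$-step nilpotency is used: it makes $[a,\mf g]$ central, so adding it to an ideal again gives an ideal. The rest is index counting.
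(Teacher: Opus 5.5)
Your proof is correct and follows the same route as the paper: enlarge $I$ to $I+[a,\mf g]$, note that this still separates $c$ from $[a,\mf g]$, and conclude $I+[a,\mf g]=I$ by minimality of the index. You also supply two details the paper leaves implicit: that $I+[a,\mf g]$ is an ideal (because $[a,\mf g]\subset\gamma_2\mf g$ is central by $2$-step nilpotency), and the explicit index comparison $[\mf g:I]=[\mf g:J]\,[J:I]$.
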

	\begin{proof}
	As the quotient
$\mf g/(I+[a,\mf g])$ still separates $c$ from $[a,\mf g]$, it must hold that $I+[a,\mf g]=I$.
	\end{proof}

Our goal is to show that for $a_p=pu_1-u_2$ we have $\cd_{\mf g}(a_p,e_l)\geq p^{l(k-l)}$.
	For this we consider the following quotients of $\mf g_{k,l}$:
	
	\begin{definition}
		Let $K_p\subset \mf g_{k,l}$ be the ideal generated by $\set{pe_i-e_{i+1}\mid i\in\range{l-1}}$. Define then $$\mf g_{k,l,p}=\frac{\mf g_{k,l}}{K_p}.$$
	\end{definition}
	To motivate this definition, notice that $K_p \subset [a_p,\mf g_{k,l}]$ as for $i\in[1,l-1]$, we have that $pe_i-e_{i+1}=[a_p,x_{i-1}]$. It is clear that the commutator subgroup of $\mf g_{k,l,p}$ is a cyclic subgroup generated by $e_1$ as the elements $e_i$ and $p^{i-1}e_1$ project to the same element. One checks that $[a_p,\mf g_{k,l}]+K_p= \langle p^l e_1\rangle+K_p.$ Notice that \cref{prop:factorsThroughACommut} allows us to pass from $\cd_{\mf g_{k,l}}$ to $\cd_{\mf g_{k,l,p}}$.
			
	As $\mf g_{k,l,p}$ has cyclic derived Lie subring, the Lie bracket induces a bilinear form$$B:\frac{\mf g_{k,l,p}}{\langle e_1\rangle}\times \frac{\mf g_{k,l,p}}{\langle e_1\rangle}\rightarrow \ZZ$$
	
Let $V \subset \frac{\mf g_{k,l,p}}{\langle e_1\rangle}$ be the submodule of codimension-$1$ generated by $\{u_1,x_0,x_1,\cdots ,x_{k-l-3}\}$.	The bilinear form $B$ induces a linear map $$\begin{matrix}
		B_*&:&V&\rightarrow&V^*\\
		&:&a&\mapsto&[a,\_]\mid_V
	\end{matrix}.$$
	
	\begin{lemma}\label{prop:BInvertible}
		For every $i\geq 1$, the map $\frac{\ZZ}{p^i\ZZ}\otimes B_*$ is an isomorphism from $\frac{\ZZ}{p^i\ZZ}\otimes V$ to $\frac{\ZZ}{p^i\ZZ}\otimes V^*$.
	\end{lemma}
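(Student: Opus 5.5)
The plan is to show that the integer matrix of $B_*$, with respect to the basis $\{u_1,x_0,x_1,\dots,x_{k-l-3}\}$ of $V$ and its dual basis of $V^*$, has determinant $\pm 1$. Then $B_*$ is already an isomorphism of free $\ZZ$-modules, and tensoring with $\ZZ/p^i\ZZ$ preserves this. Since $V$ is free of finite rank, $\frac{\ZZ}{p^i\ZZ}\otimes V^*$ is canonically the dual of $\frac{\ZZ}{p^i\ZZ}\otimes V$. So it suffices to check that the Gram matrix of $B|_{V\times V}$ is unimodular over $\ZZ$.

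\textbf{Step 1: compute the Gram matrix.} In $\mf g_{k,l,p}$ the derived subring is generated by $e_1$, and $e_j\equiv p^{j-1}e_1$ modulo $K_p$. Identifying $\langle e_1\rangle\cong\ZZ$, the relations give the following entries.
\begin{itemize}
\item $B(u_1,x_j)=p^{j}$ for $0\le j\le l-1$, and $B(u_1,x_j)=0$ for $j\ge l$.
\item $B(x_{2i-1},x_{2i})=1$ for $1\le i\le \frac{k-l-3}{2}$.
\item All other brackets among basis elements of $V$ vanish. In particular $x_0$ pairs nontrivially only with $u_1$, and the $u_2$-relations play no role because $u_2\notin V$.
\end{itemize}
Here one uses $k\ge 2+2l$, so that there are at least $l$ elements $x_j$. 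One also uses that $k-l$ is odd, so $k-l-3$ is even and the $x_1,\dots,x_{k-l-3}$ split exactly into the pairs $(x_{2i-1},x_{2i})$. Thus the Gram matrix $M$ is antisymmetric. Its first row, indexed by $u_1$, is $(0,1,p,p^2,\dots,p^{l-1},0,\dots,0)$. Its second row, indexed by $x_0$, is $(-1,0,\dots,0)$. The lower-right block on $x_1,\dots,x_{k-l-3}$ is the standard symplectic block-diagonal matrix $\mathrm{diag}(J,\dots,J)$ with $J=\begin{pmatrix}0&1\\-1&0\end{pmatrix}$.

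\textbf{Step 2: compute the determinant.} Expand $\det M$ along the $x_0$ row, whose only nonzero entry is $-1$ in the $u_1$ column. This deletes the $x_0$ row and the $u_1$ column. In the remaining minor, the $x_0$ column now has a single nonzero entry, namely $1$ in the $u_1$ row; expand along it. What remains is the block $\mathrm{diag}(J,\dots,J)$, whose determinant is $1$. Hence $\det M=\pm1$, independently of $p$. Equivalently, one can apply the unimodular change of basis $x_j\mapsto x_j-p^{j}x_0$ for $j\ge1$, which kills the entries $p^j$ in the $u_1$ row and makes $M$ a direct sum of symplectic blocks.

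\textbf{Step 3: conclude.} Since $\det M=\pm1$ is a unit in $\ZZ/p^i\ZZ$, the reduction of $M$ modulo $p^i$ is invertible, so $\frac{\ZZ}{p^i\ZZ}\otimes B_*$ is an isomorphism.

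The main point needing care is Step 1. One must verify that $B$ really is $\ZZ$-valued on $V$, that is, that $\langle e_1\rangle$ is infinite cyclic in $\mf g_{k,l,p}$ and that $e_j$ maps to exactly $p^{j-1}e_1$ there. One must also track the index ranges in the relations so that no extra bracket, such as $[x_{2i-1},x_{2i}]$ at the boundary or $[u_1,x_j]$ for $j\ge l$, is missed.
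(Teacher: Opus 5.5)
Your proof is correct and takes essentially the same route as the paper. Both arguments reduce to showing that the Gram matrix of $B$ in the basis $\{u_1,x_0,\dots,x_{k-l-3}\}$ has unit determinant. The paper first reduces modulo $p$: there the entries $p^j$ with $j\ge 1$ vanish, and the matrix becomes the standard symplectic one. You instead compute $\det M=\pm1$ directly over $\ZZ$, which gives the slightly stronger fact that $B_*$ is already an isomorphism of $\ZZ$-modules.
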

	\begin{proof}
		Let $\beta=\{u_1,x_0,x_1,\cdots ,x_{k-l-3}\}$, a basis of $V$ with dual basis $\beta^*$, let $\delta$ be the map from $V^*$ to $V$ mapping the dual basis vectors to the corresponding basis vectors. Clearly $\delta$ is an isomorphism, it thus suffices to show that $\frac{\ZZ}{p^i\ZZ}\otimes(\delta\comp B_*)$ is an isomorphism. For this it suffices to show that the determinant of $\frac{\ZZ}{p^i\ZZ}\otimes(\delta\comp B_*)$ is invertible in $\frac{\ZZ}{p^i\ZZ}$ or thus that the determinant of $\delta\comp B_*$ is not a multiple of $p$. To show this, it in turn suffices that $\frac{\ZZ}{p\ZZ}\otimes(\delta\comp B_*)$ is invertible.
		
		Notice that for $i\geq 2$, $e_i\in \langle p e_1\rangle$. It thus follows that $\frac{\ZZ}{p\ZZ}\otimes B(u_1,x_j)$ vanishes unless $j=0$. We also have that $\frac{\ZZ}{p\ZZ}\otimes B(x_i,x_j)$ vanishes unless $\{i,j\}=\{2n-1,2n\}$ for some $n\in \NN_{>0}$.
		We thus have that $$\frac{\ZZ}{p\ZZ}\otimes B(\_,\_)\mid_{V\times V}$$ is the standard symplectic form.
        The corresponding map $\frac{\ZZ}{p\ZZ}\otimes(\delta\comp B_*)$ has the same matrix and is thus invertible.
	\end{proof}
	
	\begin{lemma}\label{prop:Vinjective}
		Let $a\in V$. If for all $b\in V$ holds that $B(a,b)\cong 0\mod p^l$ then $a\in p^lV$.
	\end{lemma}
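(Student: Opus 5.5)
The plan is to use \cref{prop:BInvertible} with $i=l$. The hypothesis says that the functional $B_*(a)=B(a,\_)\mid_V \in V^*$ takes values in $p^l\ZZ$ on every $b\in V$. Since $V$ is a free $\ZZ$-module with basis $\beta$, this means exactly that $B_*(a)\in p^lV^*$. In other words, the image of $a$ under
\[
\frac{\ZZ}{p^l\ZZ}\otimes B_*\colon \frac{\ZZ}{p^l\ZZ}\otimes V\to \frac{\ZZ}{p^l\ZZ}\otimes V^*
\]
is zero.

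One technical point must be settled first: $B$ takes values in $\ZZ$. This requires identifying the derived subring of $\mf g_{k,l,p}$ with $\ZZ$ via $e_1\mapsto 1$, so that $e_i\mapsto p^{i-1}$. I would check that $\langle e_1\rangle$ is infinite cyclic in $\mf g_{k,l,p}$. This holds because $K_p$ meets the span of the $e_i$ in the kernel of the surjection $\ZZ^l\to\ZZ$, $e_i\mapsto p^{i-1}$, and $K_p$ is central, so it is just the span of the generators $pe_i-e_{i+1}$. The bracket therefore induces a genuine $\ZZ$-valued bilinear form on $V$. The congruence $B(a,b)\equiv 0 \bmod p^l$ is then meaningful, and reducing mod $p^l$ commutes with tensoring.

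By \cref{prop:BInvertible}, the reduced map is an isomorphism, hence injective. So the class of $a$ in $\frac{\ZZ}{p^l\ZZ}\otimes V\cong V/p^lV$ vanishes, i.e.\ $a\in p^lV$.

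Concretely, in the basis $\beta$ and its dual $\beta^*$, $B_*$ is represented by an integer matrix $M$ whose determinant is prime to $p$, as shown in the proof of \cref{prop:BInvertible}. If $Ma\in p^l\ZZ^{\beta}$, then multiplying by the adjugate gives $\det(M)\,a\in p^l\ZZ^{\beta}$. Since $\gcd(\det M,p)=1$, we get $a\in p^lV$.

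The only real obstacle is the bookkeeping that $B$ is an honest $\ZZ$-valued form on the free module $V$, which the earlier discussion of $\mf g_{k,l,p}$ essentially provides. Everything after that is linear algebra over $\ZZ/p^l\ZZ$.
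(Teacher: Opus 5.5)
Your proof is correct and follows the same route as the paper, which simply says the lemma follows immediately from \cref{prop:BInvertible}; you unpack that remark through the injectivity of $\frac{\ZZ}{p^l\ZZ}\otimes B_*$, or equivalently through the adjugate argument using $\gcd(\det M,p)=1$. Your additional check that $\langle e_1\rangle$ is infinite cyclic in $\mf g_{k,l,p}$, so that $B$ is genuinely $\ZZ$-valued, is correct bookkeeping that the paper leaves implicit.
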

	\begin{proof}
		This follows immediately from \Cref{prop:BInvertible}. 
	\end{proof}
	
	From this we obtain a first lower bound on $\cd$:
	\begin{lemma}\label{prop:cdForP}
		$\cd_{\mf g_{k,l,p}}(a_p,e_l)\geq p^{l(k-l)}$
	\end{lemma}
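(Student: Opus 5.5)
The plan is to compute the index of an optimal ideal in two layers: its intersection with the cyclic derived subring $\langle e_1\rangle$ of $\mf g_{k,l,p}$, and its image in $\mf g_{k,l,p}/\langle e_1\rangle$. We bound each layer from below separately and then multiply.

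First I record the setting. The quotient $\mf g_{k,l,p}$ is free of rank $k-l+1$, with basis $u_1,u_2,x_0,\dots,x_{k-l-3},e_1$. In it $e_l=p^{l-1}e_1$, and by the remark preceding the lemma, $[a_p,\mf g_{k,l,p}]=\langle p^le_1\rangle$. In particular $e_l\notin[a_p,\mf g_{k,l,p}]$, so $\cd_{\mf g_{k,l,p}}(a_p,e_l)$ is defined. Let $I\normal\mf g_{k,l,p}$ be an ideal realising $\cd_{\mf g_{k,l,p}}(a_p,e_l)$. By \cref{prop:factorsThroughACommut}, $[a_p,\mf g_{k,l,p}]\subset I$, so $p^le_1\in I$. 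Since $p^{l-1}e_1=e_l\notin[a_p,\mf g_{k,l,p}]+I$, also $p^{l-1}e_1\notin I$. The subgroup $I\cap\langle e_1\rangle$ of $\langle e_1\rangle\cong\ZZ$ therefore contains $p^le_1$ but not $p^{l-1}e_1$. It is generated by $de_1$ with $d\mid p^l$ and $d\nmid p^{l-1}$, so $d=p^l$. Thus
\[ I\cap\langle e_1\rangle=\langle p^le_1\rangle, \qquad \bigl[\langle e_1\rangle:I\cap\langle e_1\rangle\bigr]=p^l. \]

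Next I pass to $M=\mf g_{k,l,p}/\langle e_1\rangle$, which is free of rank $k-l$, and let $\bar I$ be the image of $I$. Then
\[ [\mf g_{k,l,p}:I]=\bigl[\langle e_1\rangle:I\cap\langle e_1\rangle\bigr]\,[M:\bar I]=p^l\,[M:\bar I]. \]
Take $\bar x\in\bar I\cap V$ and lift it to $x\in I$. Because $e_1$ is central, $B(\bar x,b)e_1=[x,\tilde b]$ for any lift $\tilde b$ of $b\in V$. Since $I$ is an ideal, this lies in $I\cap\langle e_1\rangle=\langle p^le_1\rangle$, so $B(\bar x,b)\equiv0 \bmod p^l$ for all $b\in V$. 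Now \cref{prop:Vinjective} gives $\bar x\in p^lV$, that is, $\bar I\cap V\subset p^lV$.

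Since $V/(V\cap\bar I)$ embeds into $M/\bar I$, and $V$ has rank $k-l-1$ (basis $u_1,x_0,\dots,x_{k-l-3}$), we get
\[ [M:\bar I]\ \ge\ [V:V\cap\bar I]\ \ge\ [V:p^lV]=p^{l(k-l-1)}. \]
Multiplying the two layers gives $[\mf g_{k,l,p}:I]\ge p^l\cdot p^{l(k-l-1)}=p^{l(k-l)}$, which is the claim.

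The main point needing care is the identification of $I\cap\langle e_1\rangle$ exactly. This uses the minimality of $I$ through \cref{prop:factorsThroughACommut}, together with the fact that $\mf g_{k,l,p}$ is torsion-free with $e_i=p^{i-1}e_1$. It is also what makes the bracket values of elements of $I$ divisible by $p^l$. The nondegeneracy mod $p^l$ is already supplied by \cref{prop:BInvertible} and \cref{prop:Vinjective}, so the rest is bookkeeping of indices.
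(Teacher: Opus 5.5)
Your proof is correct and follows the paper's argument. The layer $\langle e_1\rangle$ contributes $p^l$ via \cref{prop:factorsThroughACommut}, and \cref{prop:Vinjective} forces the image of the optimal ideal in $V$ into $p^lV$, which contributes $p^{l(k-l-1)}$; the only difference is presentational: you count indices of the ideal $I$ directly rather than factoring $\varphi^+\colon V^+\to\frac{\ZZ}{p^l\ZZ}\otimes V$ through $\overline{\varphi}$, which also avoids the paper's step of rescaling $b$ so that $[a,b]=p^{l-1}e_1$.
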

	\begin{proof}
		Suppose that $\varphi:\mf g_{k,l,p}\rightarrow\mf q$ is a finite quotient of minimal size such that $\varphi(e_l)\notin \varphi([a_p,\mf g_{k,l,p}])$. By \Cref{prop:factorsThroughACommut}, $\varphi$ factors over $\frac{\mf g_{k,l,p}}{\langle p^le_1\rangle}$. Denote $$\overline{\varphi}:\frac{\mf g_{k,l,p}}{\langle p^le_1\rangle}\rightarrow\mf q$$ the induced map.
		Note that in this case $\varphi(\langle e_1\rangle)=\frac{\langle e_1\rangle}{\langle p^le_1\rangle}$, as $\varphi(p^l e_1) = 0$, but any non-trivial quotient would map $e_l=p^{l-1}e_1$ to $0$. 
		
		Let now $V^+=V\oplus (\frac{\ZZ}{p^l\ZZ}\otimes\langle e_1\rangle)\subset\frac{\mf g_{k,l,p}}{\langle p^le_1\rangle}$, and consider the map $\varphi^+: V^+\rightarrow \frac{\ZZ}{p^l\ZZ}\otimes V$ mapping $e_1$ to $0$. Notice that $V^+$ is a Lie subring and that $\varphi^+$ is a Lie ring morphism. Let $a\in V^+\cap\ker(\overline{\varphi})$, we claim that $a\in p^l V+\langle e_1\rangle$. Indeed suppose $a$ is not of this form, then by \Cref{prop:Vinjective}, there exists some $b\in V$ such that $[a,b]\in \frac{\langle e_1\rangle}{\langle p^le_1\rangle}$ is non-trivial. Taking some multiple of $b$, we may assume that $[a,b]=p^{l-1}e_1 + \langle p^l e_1 \rangle$. As $\overline\varphi(p^{l-1}e_1)$ is non-trivial in $\mf q$ it follows that $\overline\varphi(a)$ also is non-trivial, leading to the claim. 
        
        As a consequence, we have that $\ker(\overline\varphi\mid _{V^+})\subset\ker( \varphi^+)$ and thus that the map $\varphi^+$ factors over $\overline{\varphi}\mid_{V^+}$. Notice that $\frac{\ZZ}{p^l\ZZ}\otimes V$ has $p^{l(k-l-1)}$ elements. As the factor map $\tilde \varphi:\overline{\varphi}(V^+)\rightarrow \frac{\ZZ}{p^l\ZZ}\otimes V$ has kernel at least $\frac{\langle e_1\rangle}{\langle p^le_1\rangle}$, it follows that $\mf q$ has at least $p^l p^{l(k-l-1)} = p^{l(k-l)}$ elements.

	\end{proof}
	\begin{corollary}\label{prop:BoundIsSharp}
		If $k-l$ is odd, then
		$n^{l(k-l)} \preceq \conj_{\mf g_{k,l}}(n)$.
	\end{corollary}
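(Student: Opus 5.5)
The plan is to pass from the quotient ring $\mf g_{k,l,p}$ back to $\mf g_{k,l}$ and then evaluate along a sparse sequence of primes, using \cref{prop:comparegeometric} and \cref{prop:psubgeom}. For each prime $p$, set $a_p=pu_1-u_2$ and $c=e_l$. The Guivarch norm of $a_p$ is at most a constant times $p$. The norm of $e_l$ is bounded by a constant independent of $p$, since $e_l\in\mf g_2$ has fixed coordinates.

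First, I will check that $e_l\notin[a_p,\mf g_{k,l}]$, so that $\cd_{\mf g_{k,l}}(a_p,e_l)$ is defined. Modulo $K_p\subset[a_p,\mf g_{k,l}]$ we have
\[
[a_p,\mf g_{k,l}]+K_p=\langle p^le_1\rangle+K_p,
\]
while $e_l\equiv p^{l-1}e_1$. Moreover, $\mf g_{k,l}/K_p$ has derived ring $\langle e_1\rangle\cong\ZZ$ (torsion-free), because the generators $pe_i-e_{i+1}$ span an isolated submodule of $\mf g_2$. Hence $p^{l-1}e_1\notin\langle p^le_1\rangle$, which gives $e_l\notin[a_p,\mf g_{k,l}]$.

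Next, I will compare $\cd_{\mf g_{k,l}}(a_p,e_l)$ with $\cd_{\mf g_{k,l,p}}(a_p,e_l)$. Take an ideal $I$ of minimal index with $e_l\notin[a_p,\mf g_{k,l}]+I$. By \cref{prop:factorsThroughACommut}, $[a_p,\mf g_{k,l}]\subset I$, and therefore $K_p\subset I$. So $I$ descends to an ideal $\bar I$ of $\mf g_{k,l,p}$ of the same index, and $\bar I$ still separates $e_l$ from the image of $[a_p,\mf g_{k,l}]$, which is $[a_p,\mf g_{k,l,p}]$. Applying \cref{prop:cdForP} then gives
\[
\cd_{\mf g_{k,l}}(a_p,e_l)\geq p^{l(k-l)}.
\]

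Finally, for $n$ ranging over primes $p$, we get $\conj_{\mf g_{k,l}}(Cp)\geq p^{l(k-l)}$ for a uniform constant $C$. Since primes satisfy the hypothesis of \cref{prop:comparegeometric} by \cref{prop:psubgeom}, and $\conj$ is non-decreasing, this yields $n^{l(k-l)}\preceq\conj_{\mf g_{k,l}}(n)$. Because $k-l$ is odd, $l(k-l)=l\lfloor k-l\rfloor_{\mathrm{odd}}$.

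The main obstacle I expect is the bookkeeping in the descent step. One must verify that $K_p$ is really contained in $[a_p,\mf g_{k,l}]$ as an ideal; this holds because $K_p$ is central, so its submodule span is already an ideal. One must also check that the quotient preserves the non-membership of $e_l$. I would verify torsion-freeness of $\mf g_2/K_p$ explicitly, using the basis $e_1,\ e_2-pe_1,\ \dots,\ e_l-pe_{l-1}$.
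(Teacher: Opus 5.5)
Your proposal is correct and follows the paper's proof. In both, the norms of $a_p=pu_1-u_2$ and $e_l$ are bounded by $O(p)$ and $O(1)$. A minimal separating ideal is pushed down to $\mf g_{k,l,p}$ via \cref{prop:factorsThroughACommut}, using $K_p\subset[a_p,\mf g_{k,l}]$. The bound then comes from \cref{prop:cdForP}, evaluated along primes with \cref{prop:comparegeometric}.

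The paper leaves two details implicit, and your explicit checks of them are right. First, $e_l\notin[a_p,\mf g_{k,l}]$, which you get from $\mf g_2/K_p\cong\ZZ$ with $e_i\mapsto p^{i-1}$. Second, the descended ideal still separates $e_l$ from $[a_p,\mf g_{k,l,p}]$.
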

	\begin{proof}
		Notice that $a_p$ and $e_l$ are of norm no more then $p+1$ and $1$ respectively. By \Cref{prop:comparegeometric} it thus suffices to show that $\cd_{\mf g_{k,l}}(a_p,e_l)\geq p^{l(k-l)}$ for all primes $p$.
		Let $\varphi:\mf g_{k,l}\rightarrow\mf q$ be a finite quotient such that $\varphi(e_l)\notin\varphi([a_p,\mf g_{k,l}])$, and suppose that $\mf q$ is of minimal size such that these conditions hold, then by \cref{prop:factorsThroughACommut}, this morphism factors through $\mf g_{k,l,p}$. The result follows by \Cref{prop:cdForP}.
	\end{proof}

	\bibliographystyle{plain}
	\bibliography{refs}
\end{document}